\newtheorem{theorem}{Theorem}[section]
\newtheorem{lemma}[theorem]{Lemma}
\newtheorem{proposition}[theorem]{Proposition}
\theoremstyle{definition}
\newtheorem{remark}[theorem]{Remark}
\numberwithin{equation}{section}
\def\fin { \vskip 0pt \hfill $\diamond$ \vskip 12pt}
\begin{document}
\title[Generalized Boussinesq equation on hyperbolic spaces]{Dispersive estimates and generalized Boussinesq equation on hyperbolic spaces with rough initial data}
\author[L.C.F. Ferreira \ ]{Lucas C. F. Ferreira \ \ }
\address{Lucas C. F. Ferreira \hfill\break University of Campinas, IMECC-Department of
Mathematics, Rua S\'{e}rgio Buarque de Holanda, 651, CEP 13083-859,
Campinas-SP, Brazil}
\email{lcff@ime.unicamp.br}
\author[\ P.T. Xuan]{\ \ Pham T. Xuan}
\address{Pham Truong Xuan \hfill\break Thang Long Institute of Mathematics and Applied Sciences (TIMAS), Thang Long University \hfill\break
Nghiem Xuan Yem, Hoang Mai, Hanoi, Vietnam}
\email{xuanpt@thanglong.edu.vn or phamtruongxuan.k5@gmail.com}
\thanks{LCFF was supported by CNPq (grant: 308799/2019-4 and grant: 312484/2023-2), Brazil.}

\begin{abstract}
We consider the generalized Boussinesq (GBq) equation on the real hyperbolic space $\mathbb{H}^{n}$ ($n\geq2$) in a rough framework based on Lorentz spaces. First, we establish dispersive estimates for the GBq-prototype group, which is associated with a core term of the linear part of the GBq equation, through a manifold-intrinsic Fourier analysis and estimates for oscillatory integrals in $\mathbb{H}^{n}$. Then, we obtain dispersive estimates for the GBq-prototype and Boussinesq groups on Lorentz spaces in the context of $\mathbb{H}^{n}$. Employing those estimates, we obtain local and global well-posedness results and scattering properties in such framework. Moreover, we prove the polynomial stability of mild solutions and leverage this to improve the scattering decay.
\vspace{0.2cm}

\noindent\textbf{Keywords:} Generalized Boussinesq equations, Schr\"odinger equations, Oscillatory integrals, Dispersive estimates, Well-posedness, Scattering, Hyperbolic spaces \vspace{0.2cm}

\noindent\textbf{AMS MSC:} 58JXX, 35Q55, 35A01, 35A02, 35P25, 35B35, 35B40, 42B35

\end{abstract}
\maketitle

\tableofcontents

\font\nho=cmr10





\section{Introduction}

In this work, we analyze the generalized Boussinesq (GBq) equation on the real hyperbolic space $\mathbb{H}^{n}$ ($n\geq2$), which reads as
\begin{equation}%
\begin{cases}
u_{tt}-\Delta_{x}u+\Delta_{x}^{2}u+\Delta_{x}f(u)=0,\, & x\in\mathbb{H}%
^{n},\,t\in\mathbb{R},\\
u(0,x)=u_{0}(x),\, & x\in\mathbb{H}^{n}\\
u_{t}(0,x)=\phi(x)=\Delta_{x}v_{0}(x),\, & x\in\mathbb{H}^{n},
\end{cases}
\label{Bou}%
\end{equation}
where the unknown $u$ is a real function on $(x,t)\in\mathbb{H}^{n}%
\times\mathbb{R}$, the operator $\Delta_{x}$ denotes the Laplace-Beltrami
operator related to the hyperbolic metric, and the nonlinearity $f(u)$ meets
the conditions
\begin{equation}
f(0)=0\text{ and }|f(a_{1})-f(a_{2})|\leq C_{f}(|a_{1}|^{b-1}+|a_{2}%
|^{b-1})|a_{1}-a_{2}|,\text{ for }b>1, \label{Bou-Cond-f}%
\end{equation}
where the constant $C_{f}$ is independent of $a_{1},\,a_{2}\in\mathbb{R}$. The
initial values $u_{0},\,v_{0}:\mathbb{H}^{n}\rightarrow\mathbb{R}$ are given functions.

The GBq equation \eqref{Bou} was introduced by Boussinesq \cite{Bou1872} and
then this equation and its generalisations on the Euclidean space
$\mathbb{R}^{n}$ have been studied by several authors. In what follows, we
first recall concisely some results of the literature for them on the
Euclidean setting. The local well-posedness of the GBq equation on
$\mathbb{R}$ ($n=1$) were proved by Bona and Sachs \cite{Bo1988} by
considering $f\in C^{\infty}(\mathbb{R}),$ $u_{0}\in H^{s+2}(\mathbb{R})$ and
$v_{0}\in H^{s+1}(\mathbb{R})$ with $s>1/2.$ In addition, assuming $s\geq1,$
$1<b<5,$ and that the initial data $[u_{0},v_{0}]$ is close enough to a stable
solitary wave, they showed that the solution can be extended globally.
Tsutsumi and Matahashi \cite{Tsu1991} obtained a similar local well-posedness
result by considering $f(u)=\left\vert u\right\vert ^{b-1}u,$ $b>1,$ $u_{0}\in
H^{1}(\mathbb{R})$ and $v_{0}\in H^{1}(\mathbb{R}).$ Linares
\cite{Linears1993} showed the local well-posedness by assuming either $1<b<5$
and $[u_{0},(v_{0})_{x}]\in L^{2}(\mathbb{R})\times\dot{H}^{-1}(\mathbb{R}),$
or $b>1$ and $[u_{0},(v_{0})_{x}]\in H^{1}(\mathbb{R})\times L^{2}%
(\mathbb{R}).$ Moreover, by using energy conservation law, he obtained that
the solutions can be extended globally in $H^{1}(\mathbb{R})$ for small enough
data $[u_{0},(v_{0})_{x}]$ in $H^{1}(\mathbb{R})\times L^{2}(\mathbb{R})$.
Supposing further regularity on the initial data, Linares and Scialom
\cite{Linears1995} showed results on time-decay and scattering for the small
global solutions obtained in \cite{Linears1993}. The local results in
\cite{Linears1993} were extended to high dimensions by Farah \cite{Fa09}. By
employing the Bourgain Fourier restriction norm approach, and considering
$f(u)=u^{2}$ and $s>-1/4,$ Farah \cite{Fa2009} proved a local well-posedness
result with data $[u_{0},\phi]=[u_{0},(v_{0})_{x}]\in H^{s}(\mathbb{R})\times
H^{s-1}(\mathbb{R})$. The instability of solitary waves in $H^{1}(\mathbb{R})\times L^{2}%
(\mathbb{R})$ and blow-up results for the GBq equation were provided by Liu
\cite{Liu1993,Liu1995}. The blow-up result for an improved Boussinesq type
equation in $W^{2,\infty}([0,\,T],\,H^{2}(0,\,1))$ was studied by Yang and
Wang \cite{Yang2003}. The small amplitude solutions and scattering for the GBq
equation and an improved modified Boussinesq equation on $\mathbb{R}^{n}\text{
(for }n\geq1$) with initial data $[u_{0},\phi]$ in a functional setting of
Besov type were investigated by Cho and Ozawa \cite{Ozawa2007}. In turn, results on
decay and scattering for the GBq equation with initial data $[u_{0},\phi]\in
H^{1}(\mathbb{R})\times L^{2}(\mathbb{R})$ were established by Liu
\cite{Liu1997}. Recently, Liu and Wang \cite{LiuWang} studied dispersive and
dissipative estimates for a dissipative-dispersive linear semigroup and gave
an application to the global well-posedness of the GBq equation with initial
data in $L^{1}(\mathbb{R}^{n})\cap H^{s}(\mathbb{R}^{n})\times|\nabla
|L^{1}(\mathbb{R}^{n})\cap|\nabla|H^{s}(\mathbb{R}^{n})$ for all $n\geq1$ and
a suitable value of $s$. Moreover, Munoz \textit{et al.} \cite{Mu2018} proved
that small solutions of the GBq equation in the energy space for one-dimension
(i.e., $H^{1}(\mathbb{R})\times L^{2}(\mathbb{R})$) must decay to zero as $t\rightarrow\infty$, strongly on slightly proper subsets of the space-time light cone. In addition, Chen \textit{et al.} \cite{Che2023} obtained local
well-posedness, finite-time blow up and small initial data scattering for GBq
equation in the energy space $H^{1}(\mathbb{R}^{n})\times L^{2}(\mathbb{R}%
^{n})$, as well as a large radial initial data scattering for the defocusing
case with $n\geqslant3$. Results on well-posedness and scattering for GBq
equation with initial data in a setting of infinite $L^{2}$-mass based on
weak-$L^{p}$ spaces on $\mathbb{R}^{n}$ ($n\geq1$) were obtained by Ferreira
\cite{Fe2011}. Moreover, we would like to refer some related works in
\cite{Es2012,Es12,Ki2013,Mc1981}.

The purpose of this paper is to establish results on the local and global
well-posedness, asymptotic stability and scattering for the GBq equation
\eqref{Bou} on hyperbolic space $\mathbb{H}^{n}$ with initial data in a class
of infinite energy spaces, namely $L^{(p,\infty)}(\mathbb{H}^{n})$-spaces (aka weak-$L^{p}(\mathbb{H}^{n})$, see Theorems \ref{LWP} and \ref{GWP}). The weak-$L^{p}$ norms of the global mild solutions present a polynomial-time decay as $|t|$
tends to infinity. Under further conditions, the regularity of the mild
solutions in the Lorentz space $L^{(p,d)}(\mathbb{H}^{n})$ is also studied in
item $(ii)$ of those theorems. In addition, we prove a scattering result for
the global mild solutions in such rough framework (see Theorem
\ref{scat}) and an asymptotic stability result (see Theorem \ref{Stable}). Using
this stability result, we are able to improve the scattering result (see
Remark \ref{rem}).

Nonlinear Schr\"{o}dinger, wave and Boussinesq-type equations are fundamental models
in mathematical physics describing nonlinear wave propagation. These equations
often exhibit nonlinear terms that pose challenges in the classical $L^{2}%
$-framework due to issues such as blow-up or lack of global regularity. To
overcome these challenges, Cazenave \textit{et al.} \cite{Ca2001} were the
first to explore the use of $L^{(p,\infty)}$-spaces in the context of Schr\"{o}dinger equations in $\mathbb{R}^{n}$ (Euclidean
case), where the integrability conditions on the solutions are relaxed. This
framework allows for a broader class of solutions and helps in establishing
well-posedness results (Hadamard sense), which ensure the existence,
uniqueness, and continuous dependence on initial data of solutions to the
equations under consideration. More precisely, they addressed the
well-posedness and scattering phenomena in $L^{(p,\infty)}$-spaces by means of
Strichartz estimates and using the mixed space-time setting $L^{(p,\infty
)}(\mathbb{R}\times\mathbb{R}^{n})$ for $p=\frac{(b-1)(N+2)}{2}$. Later, by
using dispersive-type estimates and considering time-polynomial weighted
spaces based on the $L^{(p,\infty)}(\mathbb{R}^{n})$ with $p=b+1$, Ferreira
\textit{et al.} \cite{Fe2009} obtained results on global well-posedness and
asymptotic behavior of solutions, extending previous achievements in the
$L^{p}$-setting due to Cazenave and Weissler \cite{Ca1998, Ca2000}. Moreover,
well-posedness and scattering results for Boussinesq and wave equations have
been obtained via that kind of time-weighted spaces in \cite{Fe2011} and
\cite{Fe2017,Liu2009}, respectively. In these last three works the authors
employed $L^{(p,d)}$-$L^{(p^{\prime},d)}$-dispersive estimates in
$\mathbb{R}^{n},$ where $L^{(p,d)}$ represents the so-called Lorentz space,
$p^{\prime}$ is the conjugate of $p$, and $1\leq d\leq\infty$, and regarded
suitable Kato-type classes as environment for seeking solutions. Recently, extensions of well-posedness and scattering in weak-$L^{p}$ spaces in the framework of real hyperbolic spaces have been obtained by Ferreira and Xuan
\cite{FeXuan2023} for the wave equations. In this work, the authors have used
the dispersive estimates for the wave group obtained by Tataru \cite{Ta2001}
to construct a time-weighted space which guarantees the global well-posedness
of wave equations. Note that we have exponential decays for stability and
scattering properties in the case of wave-type equations.

The dispersive equations on hyperbolic spaces and noncompact symmetric spaces
have been extensively studied in the last two decades. We refer some previous
works (and many references therein) such as
\cite{Anker2012,Anker2014,Anker2015,Ha2011,
MeTa2011,MeTa2012,Ta2001,Zhang2020,Zhang2021} for wave and Klein-Gordon
equations, and \cite{Anker2009,Anker2011,Ba2007,Ba07,Ba2008,Ba2009,Ba2015,
CaHo,Ionescu2000,Ionescu2009,Pi2006} for Schr\"{o}dinger equations. In these works,
the authors used Fourier analysis on hyperbolic spaces (or more general, on
noncompact symmetric spaces) (see \cite{Anker1996,Hel1965,Ionescu2000} for the
theory) and oscillatory integral estimates (see for example \cite{Ke1991,St1986}) to established the dispersive estimates and (radial
weighted-) Strichartz estimates for wave and Schr\"{o}dinger groups. Then, they
used these estimates to obtain the global well-posedness and scattering for
wave and Schr\"{o}dinger equations with initial data in energy spaces. Sofar, to the best of our knowledge, there are no works on the GBq equation \eqref{Bou} in hyperbolic spaces. The main obstacle comes from the higher order Laplace operator appearing in the equation. From this we have a linearized modified GBq equation associated with \eqref{Bou} (see Subsection \ref{S22}) and it is not clear how dispersive estimates would work for the corresponding group, namely the GBq-prototype group. Another important issue is how we can employ the obtained estimates to achieve results on well-posedness and scattering for \eqref{Bou}, covering rough initial data outside the usual energy space $H^{1}%
(\mathbb{H}^{n})\times L^{2}(\mathbb{H}^{n})$.

We tackle the above issues by first proving the $L^{p}$-dispersive estimates for the GBq-prototype group on hyperbolic space
$\mathbb{H}^{n}$. In particular, we develop the methods in
\cite{Anker2014,CaHo,Ionescu2000, Ionescu2009} based on the Fourier analysis
on the hyperbolic spaces and oscillatory integral estimates (via the van der
Corput lemma) to establish these estimates (see Subsection \ref{S31}). Then,
we use the interpolation inequalities to obtain the $L^{(p,d)}$-$L^{(p^{\prime
},d)}$-dispersive estimates, where the decay rate is $\left\vert t\right\vert
^{-\frac{3}{2}}$ for large $\left\vert t\right\vert $ and $\left\vert
t\right\vert ^{-\frac{n}{2}\left(  1-\frac{2}{p}\right)  }$ for small
$\left\vert t\right\vert $ (see Subsection \ref{S32}). By using these
estimates, we prove local and global well-posedness results in the mixed time
weighted spaces $\mathcal{L}_{\beta}^{T}$ and $\mathcal{L}_{\alpha_{1}%
,\alpha_{2}}$, respectively, which are constructed on the basis of
$L^{(p,\infty)}$-spaces (see Section \ref{S4}). The asymptotic behavior
properties (scattering and stability) are established in Section \ref{S5}, where the scattering data also belongs to the space containing rough elements outside $H^{1}(\mathbb{H}^{n})\times L^{2}(\mathbb{H}^{n})$.

In comparison to the Euclidean setting (see \cite{Ca1998, Ca2000, Fe2009,
Fe2011}), we obtain a different polynomial decay of scattering and asymptotic
stability, since the Boussinesq group on $\mathbb{H}^{n}$ presents a different
decay in the dispersive estimates for long times. More precisely, the
$L^{b+1}$ and $L^{(b+1,\infty)}$-norms of the solutions for GBq and
nonlinear Schr\"{o}dinger equations on $\mathbb{R}^{n}$ decay like $t^{-\alpha}$ with
$\alpha=\frac{1}{b-1}-\frac{n}{2(b+1)}<\frac{1}{b}$ as $\left\vert
t\right\vert \rightarrow\infty$, while here we have a faster polynomial decay
$t^{-\frac{3}{2}}$ as $\left\vert t\right\vert \rightarrow\infty$, i.e. with a
higher decay rate $\alpha=\frac{3}{2}$. Another interesting aspect of
comparison to our results is the drastic difference with the wave and
Klein-Gordon equations on $\mathbb{H}^{n}$ where the solutions present
stronger decays of exponential type (see \cite{Ta2001,FeXuan2023}), as
aforementioned. These contrasting differences stem from how the manifold
metric influences the asymptotic behavior of the GBq-prototype and wave
groups, as examined through a Fourier analysis intrinsic to the manifold in question. Our results together with \cite{Fe2011} provide a combined understanding about GBq equation \eqref{Bou} in the frameworks of Euclidean
and non-Euclidean spaces. Let us pointed out that the present paper is the
first work to mention about the well-posedness and asymptotic behavior for
Boussinesq-type equations on non-Euclidean settings.

The outline of this manuscript is as follows. In Section \ref{S21} we provide
some preliminaries about hyperbolic spaces and Fourier analysis on them.
Section \ref{S22} is devoted to presenting an integral formulation for the
problem as well as some associated groups of operators. The subject of Section
\ref{S3} is the key dispersive estimates for the GBq-prototype group. In Sections \ref{S4} and \ref{S5}, we obtain the needed estimates for the Boussinesq group and prove our results on local and global well-posedness
and asymptotic behavior properties for the GBq equation. In Appendix, we give a detailed proof for an estimate of oscillatory integrals used in previous sections.

\

{\bf Notation and further comments:}

\begin{itemize}
\item[$\bullet$] For two real functions $f(x)$ and $g(x)$, we say $f\lesssim
g$ if there exists a universal constant $C>0$ such that $f(x)\leq Cg(x),$ for
all $x$. Also, we write $f\simeq g$ if both $f\lesssim g$ and $g\lesssim f$
holds true.

\item[$\bullet$] Using the kernel and dispersive estimates for GBq-prototype group obtained in this paper (see Lemma \ref{KerEst1} and Theorem \ref{Dispersive}), one can prove the Strichartz estimates and establish the well-posedness and scattering results in the energy space $H^1(\mathbb{H}^n)\times L^2(\mathbb{H}^n)$ for the GBq equation (see also Remark \ref{Strichatz}).

\item[$\bullet$] It seems that the results obtained in this paper can be extended on generalized symmetric non-compact Riemannian manifolds and we hope to treat this problem in a future paper.
\end{itemize}

\section{Boussinesq equations on hyperbolic space}\label{S2}
\subsection{Hyperbolic space and Fourier analysis}\label{S21}
We recall the notion of real hyperbolic space and the Fourier transform. For more details, we refer readers to refs. \cite{Anker1996,Ba2007,Can1997,Hel1965,
Ionescu2000,Ionescu2009}.

Let $(\mathbb{H}^{n},g):=(\mathbb{H}^{n}(\mathbb{R}),g)$ denote a real
hyperbolic space of dimension $n\geq2$ with metric $g$. This hyperbolic space
can be represented by means of the hyperboloid model in $\mathbb{R}^{n+1}$ by
considering the upper sheet of the hyperboloid
\[
\left\{  (x_{0},x_{1},...,x_{n})\in\mathbb{R}^{n+1};\text{ }\,x_{0}\geq1\text{
and }x_{0}^{2}-x_{1}^{2}-x_{2}^{2}...-x_{n}^{2}=1\,\right\}
\]
equipped with the metric
\begin{equation}
g=-dx_{0}^{2}+dx_{1}^{2}+...+dx_{n}^{2}.\label{metric1}
\end{equation}
Following this metric, we define the scalar product $\left< x,y\right> = -x_0y_0 + x_1y_1 + ...+x_ny_n$ for two points $x=(x_0,x_1,...,x_n),\, y=(y_0,y_1,...,y_n)\in \mathbb{H}^n$. The geodesic distance between these points is given by the formula (see \cite[Section 2.1]{Ba2007})
\begin{equation}\label{distance}
d(x,y) = \cosh^{-1}(-\left< x,y\right>).
\end{equation}
In terms of Lie group, the
hyperbolic space $\mathbb{H}^{n}$ can be identified with the homogeneous space
$SO(n,1)/SO(n)$. Here, $SO(n,1)$ stands for the connected Lie group of
$(n+1)\times(n+1)$-matrices $X$ with $\det X=1$ and $X_{00}>0$ that preserve
the scalar product $\left\langle \cdot,\cdot\right\rangle ,$ while $SO(n)$ is
the subgroup of $SO(n,1)$ that keeps the origin $O=(1,0...0)\in\mathbb{H}^{n}$ invariant.

In geodesic polar coordinates, the hyperbolic space $(\mathbb{H}^{n},g)$ can
be represented as
\[
\mathbb{H}^{n}=\left\{  (\cosh r,\omega\sinh r),\,r\geq0,\omega\in
\mathbb{S}^{n-1}\right\}  ,
\]
where $r$ is the radial coordinate and $\omega$ is the angular coordinate on
the unit sphere $\mathbb{S}^{n-1}$.
The metric $g$ in $\mathbb{H}^{n}$ is
given by
\[
g=dr^{2}+(\sinh r)^{2}d\omega,
\]
with $d\omega^{2}$ representing the canonical metric on the sphere
$\mathbb{S}^{n-1}$.
Following the formula \eqref{distance} we have that $r = d((\cosh r,\omega\sinh r),O)$ is the geodesic distance from a point $x=(\cosh r,\omega\sinh r) \in \mathbb{H}^n$
to origin $O$.
The associated volume form $d\mu$ is
\[
d\mu=dVol_{g}=\sinh^{n-1}rdrd\omega,
\]
where $d\omega:=d\omega_{\mathbb{S}^{n-1}}$ denotes the standard volume form
on $\mathbb{S}^{n-1}$. The Laplace-Beltrami operator $\Delta_{\mathbb{H}^{n}}$
on hyperbolic space can be expressed as
\[
\Delta_{x}:=\Delta_{\mathbb{H}^{n}}=\partial_{r}^{2}+(n-1)\coth r\partial
_{r}+\sinh^{-2}r\Delta_{\mathbb{S}^{n-1}},
\]
where $\Delta_{\mathbb{S}^{n-1}}$ is the Laplace-Beltrami operator on
$\mathbb{S}^{n-1}$. It is well-established that the spectrum of the negative
Laplace-Beltrami operator $-\Delta_{x}$ consists of the half-line $[\rho
^{2},\infty)$, where $\rho=\dfrac{n-1}{2}$.

For $\omega\in \mathbb{S}^{n-1}$ and $\lambda\in \mathbb{R}$, let $b(\omega) = (1,\omega)\in \mathbb{R}^{n+1}$ and $h_{\lambda,\omega}: \mathbb{H}^n\to \mathbb{C}$ is given by
$$h_{\lambda,\omega}(x) = \left< x,b(\omega)\right>_g^{i\lambda-\rho}.$$
The functions $h_{\lambda,\omega}$ satisfies
$$-\Delta_x h_{\lambda,\omega} = (\lambda^2+\rho^2)h_{\lambda,\omega},$$
then they are generalized eigenfunctions of $\Delta_x$. Using these functions one can determine the Fourier transform of a continuous function with compactly support $f\in C_0(\mathbb{H}^n)$ by
\begin{equation}\label{Fou1}
\hat{f}(\lambda,\omega) = \int_{\mathbb{H}^n}f(x)h_{\lambda,\omega}(x)d\mu.
\end{equation}
Moreover, if $f\in C_0^\infty(\mathbb{H}^n)$ then the Fourier inversion formula of $f$ is
\begin{align}\label{Fou2}
f(x) &= \int_{\mathbb{R}} \int_{\mathbb{S}^{n-1}} \hat{f}(\lambda,\omega)\bar{h}_{\lambda,\omega}(x)|{\bf c}(\lambda)|^{-2}d\lambda d\omega \notag\\
&= \int_{\mathbb{R}}\int_{\mathbb{S}^{n-1}} \hat{f}(\lambda,\omega)\left< x,b(\omega)\right>_g^{-i\lambda-\rho}|{\bf c}(\lambda)|^{-2}d\lambda d\omega,
\end{align}
where $c(\lambda)$ is the Harish-Chandra coefficient
\begin{equation}\label{Har}
|{\bf c}(\lambda)|^{-2} = {\bf c}(\lambda)^{-1} {\bf c}(-\lambda)^{-1} = C\frac{|\Gamma(i\lambda +\rho)|^2}{|\Gamma(i\lambda)|^2},
\end{equation}
for a suitable constant $C$. The Harish-Chandra coefficient can be estimated as (see \cite[Proposition A1]{Ionescu2000}):
\begin{equation}\label{HarEs}
|\partial_\lambda^\alpha (\lambda^{-1}{\bf c}(\lambda)^{-1})| \leq C (1+|\lambda|)^{\rho-1-\alpha}
\end{equation}
for $\alpha \in [0,\, n+2]\cap \mathbb{Z}$ and $\lambda\in \mathbb{R}$.

The Fourier transform $f\to \hat{f}$  extends to an isometry from $L^2(\mathbb{H}^n)$ to $L^2(\mathbb{R}_+\times \mathbb{S}^{n-1},\, |{\bf c}(\lambda)|^{-2}d\lambda d\omega)$ since the Plancherel theorem is valid on the hyperbolic space $\mathbb{H}^n$. In addition, for all functions $f_1,f_2 \in L^2(\mathbb{H}^{n})$, we have
$$\int_{\mathbb{H}^n}f_x(x)\overline{f_2(x)}d\mu = \frac{1}{2}\int_{\mathbb{R}\times \mathbb{S}^{n-1}}\hat{f}_1(\lambda,\omega)\overline{\hat{f}_2(\lambda,\omega)}|{\bf c}(\lambda)|^{-2}d\lambda d\omega.$$

Moreover, if we consider that $f$ is $SO(n)$-invariant, then the formulas \eqref{Fou1} and \eqref{Fou2} become
\begin{equation}\label{Fou3}
\hat{f}(\lambda,\omega)=\hat{f}(\lambda)=\int_{\mathbb{H}^n}f(x){\bf \Phi}_{-\lambda}(x)d\mu
\end{equation}
and
\begin{equation}\label{Fou4}
f(x) = \int_{\mathbb{R}} \hat{f}(\lambda){\bf \Phi}_{\lambda}(x)|{\bf c}(\lambda)|^{-2}d\lambda,
\end{equation}
where
\begin{equation}
{\bf \Phi}_{\lambda}(x) = \int_{\mathbb{S}^{n-1}}\left< x,b(\omega)\right>^{-i\lambda-\rho}_gd\omega
\end{equation}
is the spherical function.

By setting $x=(\cosh r, \mathcal{V}\sinh r) \in \mathbb{H}^n$, where $\mathcal{V}\in \mathbb{S}^{n-1}$ and $r=r(x)=d(x,O)$ is the geodesic distance (with respect to metric $g$) from the point $x$ to origin $O$, one can calculate to find that (for more details see \cite[Lemma 3.2]{Ba2007})
\begin{align}\label{Phi}
{\bf \Phi}_{\lambda}(x) &= {\bf \Phi}_\lambda(r(x)) = \int_{\mathbb{S}^{n-1}} \left( \cosh r - (\mathcal{V}\cdot \omega)\sinh r\right)^{-i\lambda-\rho}d\omega \nonumber\\
 &= C\int_0^\pi \left(\cosh r - \sinh r \cos \theta \right)^{-i\lambda-\rho}(\sin\theta)^{n-2}d\theta.
\end{align}
In what follows, we recall some decomposition properties of ${\bf \Phi}_\lambda$ which will be useful to estimate the kernel of the GBq-prototype group in Subsection \ref{S31} below (see \cite[Proposition A2]{Ionescu2000} for the proof):
\begin{itemize}
\item[$\bullet$] For $r\geq \dfrac{1}{10}$, we can write ${\bf \Phi}_\lambda$ as follows
\begin{equation}\label{Sph1}
{\bf \Phi}_\lambda(r) = e^{-\rho r} \left( e^{i\lambda r}{\bf c}(\lambda)m_1(\lambda,r) + e^{-i\lambda r}{\bf c}(-\lambda)m_1(-\lambda,r) \right),
\end{equation}
where the function $m_1(\lambda,r)$ satisfies the following estimate
\begin{equation}\label{Sphere1}
|\partial^\alpha_\lambda m_1(\lambda,r)| \leq C(1+|\lambda|)^{-\alpha}
\end{equation}
with $\alpha \in [0,\, n+2]\cap \mathbb{Z}$ and $\lambda \in \mathbb{R}$.

\item[$\bullet$] For $r \leq 1$, we can write ${\bf \Phi}_\lambda$ as follows
\begin{equation}\label{Sph2}
{\bf \Phi}_\lambda(r) = e^{i\lambda r} m_2(\lambda,r) + e^{-i\lambda r} m_2(-\lambda,r),
\end{equation}
where the function $m_2(\lambda,r)$ satisfies the following estimate
\begin{equation}\label{Sphere2}
|\partial^\alpha_\lambda m_2(\lambda,r)| \leq C(1+r|\lambda|)^{-\rho}(1+|\lambda|)^{-\alpha}
\end{equation}
with $\alpha \in [0,\, n+2]\cap \mathbb{Z}$ and $\lambda \in \mathbb{R}$.
\end{itemize}

\subsection{Integral formulation and associated groups}\label{S22}

The IVP (\ref{Bou}) is equivalent to the following system of equations
\begin{equation}%
\left\{
\begin{array}
[c]{rcl}%
u_t &=& v, \text{  } x\in \mathbb{H}^n, \, t\in \mathbb{R},\\
v_t &=& u - \Delta_x u - f(u), \text{  } x\in \mathbb{H}^n,\, t\in \mathbb{R},\\
 u(0,x)&=&u_{0}(x), \text{  }  x\in \mathbb{H}^n,\\
u_t(0,x)&=& \phi(x) = \Delta_x v_0(x), \text{  } x\in \mathbb{H}^n.
\end{array}
\right.
\label{Bou1}%
\end{equation}
In view of Duhamel's principle, we can formally convert the system (\ref{Bou1}) into the integral equation

\begin{equation}
\left[u(t), v(t) \right]= G(t)\left[u_{0},v_{0}\right] - \int_{0}^{t}G(t-s)\left[0,f(u(s)) \right]ds,
\label{intergralEq}%
\end{equation}
where the Boussinesq group $G(t)$ is determined by the associated linear system of \eqref{Bou1}:
\begin{equation}
\left\{
\begin{array}
[c]{rcl}%
u_{t} & = & \Delta v,\text{ }x\in\mathbb{H}^{n},t\in\mathbb{R},\\
v_{t} & = & u-\Delta u,\ x\in\mathbb{H}^{n},t\in\mathbb{R},\\
u(x,0) & = & u_{0}(x),\text{ }x\in\mathbb{H}^{n},\\
v(x,0) & = & v_{0}(x),\ x\in\mathbb{H}^{n}.
\end{array}
\right.  \label{linear11}%
\end{equation}
More precisely, for an initial data $\left[  u_{0},v_{0}\right]  $ and $t\in\mathbb{R}$, the
linear operator $G(t)$ is given by%
\begin{equation}
G(t)[u_{0},v_{0}]=\int_{\mathbb{R}}\left[
\begin{array}
[c]{cc}%
\cos(t|\xi|\left\langle \xi\right\rangle ) & -|\xi|\left\langle \xi
\right\rangle ^{-1} \sin(t|\xi|\left\langle \xi\right\rangle )\\
\left\langle \xi\right\rangle |\xi|^{-1}\sin(t|\xi|\left\langle \xi
\right\rangle ) & \cos(t|\xi|\left\langle \xi\right\rangle )
\end{array}
\right]  \left[
\begin{array}
[c]{c}%
\!\hat{u_{0}}(\xi)\!\\
\!\hat{v_{0}}(\xi)\!
\end{array}
\right]  \mathbf{\Phi}_{\lambda}(r)|\mathbf{c}(\lambda)|^{-2}d\lambda
,\label{aux-Bou-rel-1}%
\end{equation}
where
\begin{equation}
|\xi|=\sqrt{\lambda^{2}+\rho^{2}} \text{    and  } \left\langle \xi\right\rangle =(1+|\xi|^{2})^{1/2}.
\end{equation}
Equivalently, we can express
\begin{equation}
G(t)\left[  u_{0},\,v_{0}\right]  =\left[  g_{1}(t)u_{0}+g_{2}(t)v_{0}%
,\,g_{3}(t)u_{0}+g_{4}(t)v_{0}\right]  \label{BouOP}%
\end{equation}
where $g_{1}(t)=g_{4}(t)$ and $g_{1}(t),\,g_{2}(t),\,g_{3}(t)$ are the
multiplier operators with the respective symbols
\begin{equation}
\cos(t|\xi|\left\langle \xi\right\rangle ),\,-|\xi|\left\langle \xi
\right\rangle ^{-1}\sin(t|\xi|\left\langle \xi\right\rangle ),\text{ and
\ }\left\langle \xi\right\rangle |\xi|^{-1}\sin(t|\xi|\left\langle
\xi\right\rangle ).\label{aux-symbol-1}%
\end{equation}

In another way, by Gustafson et al. \cite{Gu2006} (see also Kishimoto \cite{Ki2013}), we can rewrite the first equation of system (\ref{Bou}) as
\begin{equation}
\left( i\partial_t - \sqrt{-\Delta_x(1-\Delta_x)} \right) \left(i\partial_t + \sqrt{-\Delta_x(1+\Delta_x)} \right)u = -\Delta_x f(u(t,x)).
\end{equation}
Setting
\begin{equation}
z = u + i (-\Delta_x)^{-\frac{1}{2}}(1-\Delta_x)^{-\frac{1}{2}}\partial_t u = u+ iP^{-1}\partial_t u,
\end{equation}
where $P=\sqrt{-\Delta_x(1-\Delta_x)}$, the IVP (\ref{Bou}) becomes
\begin{equation}\label{Sch}
\begin{cases}
i\partial_t z - Pz - Qf(\Re(z)) = 0,\\
z(0,x) = z_0(x) = u_0 + iP^{-1}\phi,
\end{cases}
\end{equation}
where
\begin{equation}
Q = \sqrt{\frac{-\Delta_x}{1-\Delta_x}},\, u=\frac{1}{2}(z+\bar{z})=\Re(z),\, \text{and }\, P^{-1}\partial_t u = \frac{i}{2}(\bar{z}-z) = \Im(z).
\end{equation}
Thus, by Duhamel's principle, the system \eqref{Sch} can be rewritten as
\begin{equation}
z(t) = e^{-itP}z_0 - i\int_0^t e^{-i(t-s)P}Q(f(\Re(z)))(s)ds,
\end{equation}
where $e^{-itP}$ is the GBq-prototype group associated with the linearized modified GBq-equation
\begin{equation}\label{ModiSchro}
i\partial_t z - Pz =0.
\end{equation}

\section{Dispersive estimates}\label{S3}
\subsection{Pointwise estimates for the GBq-prototype group}\label{S31}
In this subsection we establish the pointwise estimates, i.e., time and radial decays at a fixed point $(x,t)\in \mathbb{H}^n \times \mathbb{R}$ for the GBq-prototype group associating with equation \eqref{ModiSchro}.
Applying the Fourier transform to \eqref{ModiSchro}, we obtain
\begin{equation}\label{FouSchr}
i\partial_t \hat{z} - \sqrt{(\lambda^2+\rho^2)(\lambda^2+\rho^2+1)}\hat{z} = 0.
\end{equation}
The Cauchy problem of equation \eqref{FouSchr} with initial data $\hat{z}_0$ has unique solution is given by
\begin{eqnarray}
\hat{z}(t,\lambda,\omega) = e^{-it \sqrt{(\lambda^2+\rho^2)(\lambda^2+\rho^2+1)}}\hat{z}_0.
\end{eqnarray}

By using the Fourier inversion formula \eqref{Fou3}, the solution of the Cauchy
problem of equation \eqref{ModiSchro} with initial data $z_{0}$ is
\begin{equation}
z(t,r)=e^{-itP}z_{0}=\int_{\mathbb{R}}e^{-it|\xi|\left\langle \xi\right\rangle
}\hat{z}_{0}(\lambda)\mathbf{\Phi}_{\lambda}(r)|\mathbf{c}(\lambda
)|^{-2}d\lambda, \label{SchroOP}%
\end{equation}
where $|\xi|=\sqrt{\lambda^{2}+\rho^{2}}$.

From formulas \eqref{BouOP} and \eqref{SchroOP}, we have the following
relation between $l_{1}(t)=g_{1}(t)$ (which is the multiplier operator with
the symbol $\cos(t|\xi|\left\langle \xi\right\rangle )$) and $e^{-itP}$
\begin{equation}
l_{1}(t)\varphi=\frac{1}{2}\left(  e^{-itP}+e^{itP}\right)  \varphi,
\label{aux-g1}%
\end{equation}
for all scalar functions $\varphi:\mathbb{H}^{n}\rightarrow\mathbb{R}$.
Moreover, if we denote by $l_{2}(t)$ the multiplier operator with the symbol
$\sin(t|\xi|\left\langle \xi\right\rangle )$, then we also have the relation
\begin{equation}
l_{2}(t)\varphi=\frac{1}{2i}\left(  -e^{-itP}+e^{itP}\right)  \varphi,
\label{aux-g2}%
\end{equation}
for all scalar functions $\varphi:\mathbb{H}^{n}\rightarrow\mathbb{R}$.

To establish the dispersive estimates of the GBq-prototype group $e^{-itP}$, we regularize the integral (the associate kernel)
$$ I(t,r) = \int_{\mathbb{R}} e^{-it |\xi| \left< \xi \right>} {\bf \Phi}_{\lambda}(r)|{\bf c}(\lambda)|^{-2}d\lambda$$
by using the Fourier multiplier $\lambda \mapsto e^{-\varepsilon^2 \lambda^2}$, for $\varepsilon>0$ small enough. Precisely, we consider the integral
\begin{equation}
I_{\varepsilon}(t,r) = \int_{\mathbb{R}} e^{-it \sqrt{(\lambda^2+\rho^2)(\lambda^2+\rho^2+1)} - \varepsilon^2\lambda^2}{\bf \Phi}_{\lambda}(r)|{\bf c}(\lambda)|^{-2}d\lambda,
\end{equation}
for $\varepsilon>0$ small enough. Setting
$$\psi(\lambda) = \sqrt{(\lambda^2+\rho^2)(\lambda^2+\rho^2+1)} = \sqrt{T(T+1)},$$
where $T=\lambda^2+\rho^2$, by straightforward calculations, we have
\begin{equation}
\frac{d\psi}{d\lambda} = \frac{d\psi}{d T}\frac{dT}{d\lambda}= \frac{(2(\lambda^2+\rho^2)+1)\lambda}{\sqrt{(\lambda^2+\rho^2)^2+\lambda^2+\rho^2}}
\end{equation}
and
\begin{align}\label{d2}
\frac{d^2\psi}{d\lambda^2} &= \frac{d^2\psi}{dT^2}\left(\frac{dT}{d\lambda}\right)^2 + \frac{d\psi}{dT}\frac{d^2T}{d\lambda^2}\nonumber\\
&= -\frac{\lambda^2}{(T^2+T)\sqrt{T^2+T}} + \frac{2T+1}{\sqrt{T^2+T}}\nonumber\\
&=\frac{(2T+1)(T^2+T) - T+\rho^2}{(T^2+T)\sqrt{T^2+T}}\nonumber\\
&> \frac{2T^3+3T^2 +1/4}{(T^2+T)(T+1/2)},
\end{align}
because $\rho^2\geq \dfrac{1}{4}$ for $n\geq 2$ and $\sqrt{T^2+T}<T+\dfrac{1}{2}$. As consequently of \eqref{d2}, we have $\dfrac{d^2\psi}{d\lambda^2}> 1$ for $T\geq \rho^2\geq \dfrac{1}{4}$.

Therefore, for $m\in C^1_0(\mathbb{R})$ we use the van der Corput lemma (see \cite[Page 334]{St1986}) with noting that $\dfrac{d^2\psi}{d\lambda^2}> 1$ to get
\begin{equation}\label{Osci1}
\left| \int_\mathbb{R} e^{i\left(-t\psi(\lambda) \pm r\lambda\right)} m(\lambda)d\lambda \right| \leq \frac{c}{|t|^{1/2}}\int_\mathbb{R} |\partial_\lambda m(\lambda)| d\lambda.
\end{equation}
Note that, inequality \eqref{Osci1} is valid even in the case that the phase $-t\psi(\lambda)\pm r\lambda$ has critical points.

Moreover, let $\widetilde{m}: \mathbb{R} \to \mathbb{C}$ be a smooth function supported in $[-2,\, -1/2] \cup [1/2,\, 2]$ and assume that the phase $-t\psi(\lambda)\pm r\lambda$ has no critical points in the support of $\widetilde{m}$. By integrating by parts $k\in \mathbb{N}$ times with noting that $\partial_\lambda(-t\psi(\lambda)\pm r\lambda)$ is monotonic, smooth and has no critical points, we obtain that (see the proof in Appendix)
\begin{equation}\label{Osci2}
\left|  \int_\mathbb{R} e^{i\left(-t\psi(\lambda) \pm r \lambda \right)} \widetilde{m}(\lambda)d\lambda \right| \leq \frac{c}{(1+D)^k} \sum_{j=0}^k \int_\mathbb{R} |\partial_\lambda^j \widetilde{m}(\lambda)|d\lambda,
\end{equation}
where
\begin{align}
D &= \min\limits_{|\lambda|\in [1/2,\, 2]} \left|-t\dfrac{d\psi}{d\lambda} \right| + r \nonumber\\
&= \min\limits_{|\lambda|\in [1/2,\, 2]} \left| -t\frac{(2(\lambda^2+\rho^2)+1)\lambda}{\sqrt{(\lambda^2+\rho^2)^2+\lambda^2+\rho^2}} \right| + r.
\end{align}

In what follows, we extend the methods in  \cite[Theorems 3.1 and 3.2]{Anker2014} and \cite[Lemma 3.3]{Ionescu2009} to establish the pointwise decay for $I_{\varepsilon}$ uniformly with respect to $\varepsilon$. This decay plays an important role to obtain the dispersive estimates for the GBq-prototype group.
\begin{lemma}\label{KerEst}
The following pointwise estimates for $I_\varepsilon(t,r)$ hold:
\begin{equation}
|I_{\varepsilon}(t,r)| \leq
C\begin{cases}
|t|^{-n/2}r^{(n+3)/4}e^{-(n-1)r/2} &\hbox{   if   } |t|\leq 1;\\
|t|^{-3/2}r^{(n+3)/4}e^{-(n-1)r/2} &\hbox{   if   } |t|\geq 1,
\end{cases}
\end{equation}
uniformly for all $\varepsilon>0$.
\end{lemma}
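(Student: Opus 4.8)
The plan is to follow and adapt the frequency-localized oscillatory-integral method of \cite{Anker2014,Ionescu2009}, viewing the regularized kernel $I_\varepsilon(t,r)$ as a superposition of oscillatory integrals with phase $-t\psi(\lambda)\pm r\lambda$ and absorbing the Gaussian cutoff $e^{-\varepsilon^2\lambda^2}$ (together with all its $\lambda$-derivatives, which are bounded uniformly in $\varepsilon$) into the amplitude. First I would split according to the size of $r$: for $r\geq 1/10$ I insert the expansion \eqref{Sph1} of $\mathbf{\Phi}_\lambda$, and for $r\leq 1/10$ the expansion \eqref{Sph2}. In the large-$r$ case the prefactor $e^{-\rho r}=e^{-(n-1)r/2}$ is produced immediately, matching the exponential factor in the claim; moreover, using $\mathbf{c}(\lambda)|\mathbf{c}(\lambda)|^{-2}=\mathbf{c}(-\lambda)^{-1}$ from \eqref{Har}, the two summands become
\[
e^{-\rho r}\int_{\mathbb{R}} e^{i(-t\psi(\lambda)\pm r\lambda)}\,\mathbf{c}(\mp\lambda)^{-1}m_1(\pm\lambda,r)\,e^{-\varepsilon^2\lambda^2}\,d\lambda,
\]
which are exchanged by $\lambda\mapsto-\lambda$ (since $\psi$ is even), so it suffices to estimate one of them.

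Second, I would introduce a dyadic partition of unity $1=\chi_0(\lambda)+\sum_{j\geq1}\chi_j(\lambda)$, with $\chi_0$ supported in $\{|\lambda|\leq1\}$ and $\chi_j$ in $\{|\lambda|\sim2^j\}$, and analyse each piece by comparing the phase derivative $-t\psi'(\lambda)+r$ with zero. The stationary point $\lambda_c$ solves $\psi'(\lambda_c)=r/|t|$; since $\psi'(\lambda)\simeq\lambda$ near $0$ and $\psi'(\lambda)\simeq2\lambda$ for large $\lambda$, one has $\lambda_c\simeq r/|t|$. On every dyadic shell that does not contain $\lambda_c$ the phase has no critical point, and I would apply the non-stationary estimate \eqref{Osci2} with $k$ large and $D\simeq|t|2^j+r$; using \eqref{HarEs} and \eqref{Sphere1} to bound the amplitude and its derivatives by $\lesssim 2^{j\rho}$ on the shell, these contributions are summable in $j$ and negligible. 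On the single shell containing $\lambda_c$ I would instead invoke the van der Corput estimate \eqref{Osci1}, which is legitimate since $\psi''>1$ by \eqref{d2}, and which produces the factor $|t|^{-1/2}$.

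Third --- and this is where the two time regimes and the polynomial $r$-weight are arranged to appear --- I would carry out the bookkeeping of the stationary shell. For $|t|\leq1$ the critical point sits at high frequency, where $\psi(\lambda)\approx\lambda^2$, so the dominant shell at $\lambda_c\simeq r/|t|$ carries amplitude $\simeq(r/|t|)^\rho$ and, together with the $|t|^{-1/2}$ from \eqref{Osci1}, gives $(r/|t|)^\rho|t|^{-1/2}=r^\rho|t|^{-n/2}$, i.e.\ the short-time rate $|t|^{-n/2}$. For $|t|\geq1$ the critical point migrates to low frequency; the crucial point is that the amplitude coming from \eqref{Sph1} vanishes linearly at $\lambda=0$ through the factor $\mathbf{c}(-\lambda)^{-1}\simeq\lambda$, so its value at $\lambda_c\simeq r/|t|$ carries an extra factor $\simeq r/|t|$, and combined with $|t|^{-1/2}$ this yields the enhanced decay $|t|^{-3/2}$. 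In the small-$r$ case the analogous gain comes from the double vanishing $|\mathbf{c}(\lambda)|^{-2}\simeq\lambda^2$ entering \eqref{Sph2}, together with the extra decay $(1+r|\lambda|)^{-\rho}$ of $m_2$ in \eqref{Sphere2}, which controls the high frequencies. Collecting the stationary contributions and absorbing constants into the stated form produces the weight $r^{(n+3)/4}e^{-(n-1)r/2}$.

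The main obstacle I anticipate is precisely this third step: keeping every estimate uniform in the regularization parameter $\varepsilon$ while simultaneously tracking the exact dependence on both $t$ and $r$. Because the amplitude grows like $|\lambda|^\rho$ at high frequency, a direct application of van der Corput to the whole integral would blow up as $\varepsilon\to0$; it is the dyadic localization together with the rapid decay \eqref{Osci2} on the non-stationary shells that restores uniformity. The delicate part is then to balance the amplitude growth $2^{j\rho}$ against the oscillatory gain sharply enough to land on the correct power of $r$ and on the transition between $|t|^{-n/2}$ and $|t|^{-3/2}$ at $|t|\sim1$, which hinges on the different orders of vanishing of the Plancherel density at $\lambda=0$ in the two expansions \eqref{Sph1} and \eqref{Sph2}.
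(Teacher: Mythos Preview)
Your overall architecture mirrors the paper's: split in $r$, insert the spherical-function expansions \eqref{Sph1}--\eqref{Sph2}, dyadically decompose in frequency, isolate the shell containing the stationary point $\lambda_c\simeq r/|t|$, apply van der Corput \eqref{Osci1} there and the non-stationary estimate \eqref{Osci2} elsewhere. For $|t|\le 1$ this strategy indeed produces the $|t|^{-n/2}$ rate, and the paper follows it essentially as you describe.

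The genuine gap is your mechanism for the $|t|^{-3/2}$ decay when $|t|\ge 1$. You argue that since the amplitude vanishes at $\lambda=0$ (through $\mathbf{c}(-\lambda)^{-1}\simeq\lambda$ in the large-$r$ regime, or $|\mathbf{c}(\lambda)|^{-2}\simeq\lambda^2$ in the small-$r$ regime), evaluating it at $\lambda_c\simeq r/|t|$ gains an extra factor $|t|^{-1}$ on top of the $|t|^{-1/2}$ from van der Corput. That reasoning is valid on the \emph{stationary} shell, and the paper does obtain $I\le C\,r\,|t|^{-3/2}$ there. But on the \emph{non-stationary} low-frequency shells the estimate \eqref{Osci2} feeds in $\partial_\lambda^k$ of the amplitude, and once a derivative lands on $\mathbf{c}(-\lambda)^{-1}$ the vanishing at the origin is destroyed: $\partial_\lambda\bigl(\mathbf{c}(-\lambda)^{-1}\bigr)=O(1)$ near $0$. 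As a result the sum over the intermediate shells $J<j\le 0$ yields only a weaker power (the paper records $|t|^{-1/4}$ for $r\ge 1$ and $|t|^{-3/4}$ for $r\le 1$), not $|t|^{-3/2}$. Your assertion that the non-stationary pieces are ``summable in $j$ and negligible'' is therefore unjustified in the large-time regime, and the dyadic analysis alone does not reach the claimed decay.

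The paper closes this gap by a separate, non-dyadic argument appended after the dyadic estimates. For $|t|\gg 1$ it writes $I_\varepsilon=I_0+I_\infty$ with a \emph{time-dependent} cutoff $\chi_t^0(\lambda)=\chi(\sqrt{|t|}\,\lambda)$. The low-frequency piece $I_0$ is bounded directly, using $|\mathbf{\Phi}_\lambda|\le Ce^{-\rho r}$ and $|\mathbf{c}(\lambda)|^{-2}\le C\lambda^2$ on $|\lambda|\lesssim|t|^{-1/2}$, which gives $|t|^{-3/2}e^{-\rho r}$ outright. The high-frequency piece $I_\infty$ is handled by $L>n$ integrations by parts with the operator $\partial_\lambda\circ\lambda^{-1}$, exploiting $|\partial_\lambda\psi(\lambda)|\gtrsim|\lambda|$ to generate arbitrarily many inverse powers of $|t|$; this produces the factor $(1+r)^L$ absorbed into the polynomial weight. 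Without this (or an equivalent) additional step, your outline does not deliver the stated large-time bound.
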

\begin{proof}
Let $\eta_0:\mathbb{R} \to [0,\, 1]$ be a smooth even function supported in $[-2,\, -1/2]\cup [1/2,\, 2]$ such that for any $\lambda \in \mathbb{R} \setminus \left\{ 0\right\}$, we have
\begin{equation}\label{Decom}
\sum_{j\in \mathbb{Z}} \eta_0(\lambda/2^j)=1.
\end{equation}
For any $j\in \mathbb{Z}$, we set $\eta_j(\lambda) = \eta_0(\lambda/2^j)$ and $\eta_{\leq j} = \sum\limits_{j'\leq j}\eta_{j'}$. In the following, we establish the estimates for $I_\varepsilon(t,r)$ on two domains depending on the radial variable: $\left\{r\geq 1\right\}$ and $\left\{0<r\leq 1\right\}$.\\

\underline{On the domain $\left\{r\geq 1\right\}$.} By using the equalities \eqref{Har}, \eqref{Sph1} and \eqref{Decom}, we have
\begin{align}\label{twoterms}
|I^+_\varepsilon(t,r)_|{_{\left\{r\geq 1\right\}}}| &= e^{-\rho r}\left| \int_{\mathbb{R}} e^{i\Psi(\lambda)-\varepsilon^2 \lambda^2}\eta_{\leq J}(\lambda) m_1(\lambda,r){\bf c(-\lambda)}^{-1}d\lambda  + \sum_{j>J} \int_{\mathbb{R}} e^{i\Psi(\lambda)-\varepsilon^2 \lambda^2}\eta_{j}(\lambda)m_1(\lambda,r){\bf c(-\lambda)}^{-1}d\lambda  \right| \nonumber\\
&\leq e^{-\rho r}\left| \int_{\mathbb{R}} e^{i\Psi(\lambda)-\varepsilon^2 \lambda^2}\eta_{\leq J}(\lambda) m_1(\lambda,r){\bf c(-\lambda)}^{-1}d\lambda  \right| \nonumber\\
&+ e^{-\rho r}\sum_{j>J}\left| \int_{\mathbb{R}} e^{i\Psi(\lambda)-\varepsilon^2 \lambda^2}\eta_{j}(\lambda)m_1(\lambda,r){\bf c(-\lambda)}^{-1}d\lambda  \right| \nonumber\\
&= e^{-\frac{(n-1)r}{2}}\left(I + \sum_{j>J}II_j\right),
\end{align}
where $\Psi(\lambda) = -t\psi(\lambda)+r\lambda$ and $J$ is the smallest integer such that $2^J\geq 2^{4}\lambda_{r,t}$, with
the function $\lambda_{r,t}$ being selected based on the asymptotic behavior of the roots of
$$\dfrac{d\Psi}{d\lambda} = t\left(-\dfrac{(2(\lambda^2+\rho^2)+1)\lambda}{\sqrt{(\lambda^2+\rho^2)^2+\lambda^2+\rho^2}}+\dfrac{r}{t}\right)=0 \Leftrightarrow \frac{(2(\lambda^2+\rho^2)+1)^2\lambda^2}{(\lambda^2+\rho^2)^2+\lambda^2+\rho^2} = \frac{r^2}{t^2}.$$
This leads to
$$ \frac{1}{2}\lambda^2 < \frac{4(\lambda^2+\rho^2)}{\lambda^2+\rho^2+1}\lambda^2\leq \frac{r^2}{t^2}\leq \frac{4(\lambda^2+\rho^2+1)}{\lambda^2+\rho^2}\lambda^2 <20\lambda^2.$$
This shows that $\lambda^2\simeq \dfrac{r^2}{t^2}$ and we can choose
\begin{equation}\label{Est2^J}
\lambda_{r,t}=\left|\dfrac{r}{t}\right|.
\end{equation}

Using inequalities \eqref{HarEs} and \eqref{Sphere1}, we have that
\begin{align}\label{m1Est}
\left|\partial_\lambda^k \left(m_1(\lambda,r){\bf c}(-\lambda)^{-1} \right) \right| & = \left|\partial_\lambda^k \left(m_1(\lambda,r)\lambda\lambda^{-1}{\bf c}(-\lambda)^{-1} \right) \right| \nonumber\\
&\leq C \sum_{k_1+k_2+k_3=k} \left|\left(\partial_\lambda^{k_1} m_1(\lambda,r)\right) \left( \partial_{\lambda}^{k_2}\lambda \right)\partial_\lambda^{k_3}\left(-\lambda^{-1}{\bf c}(-\lambda)^{-1} \right) \right| \nonumber\\
&\leq C \sum_{k_1+k_2+k_3=k} (1+|\lambda|)^{\rho-1-k_1-k_3}\left| \partial_\lambda^{k_2}\lambda\right|\nonumber\\
&\leq C \left[ |\lambda|(1+|\lambda|)^{\rho-1-k} + \mathbb{I}_{k\geq 1} (1+|\lambda|)^{\rho-k} \right]
\end{align}
for any integer number $k\in [0,\, n+2]$.

According to the definition of $J$, the phase function $\mathbf{\Psi}(\lambda)$ has critical points only within the region where $\eta_{\leq J}$ is supported. These critical points arise solely from the first term $I$ in the equality \eqref{twoterms}. Therefore, we use the oscillatory integral estimate \eqref{Osci1}, inequality \eqref{m1Est} for $k=0,\, 1$, and the fact that the support of $\eta_{j}\, (j\leq J)$ belongs to $[-2^J,\, 2^J]$, in order to estimate
\begin{align}\label{EstI}
I&= \left| \int_{\mathbb{R}} e^{i\Psi(\lambda)-\varepsilon^2 \lambda^2}\eta_{\leq J}(\lambda) m_1(\lambda,r){\bf c(-\lambda)}^{-1}d\lambda  \right| \nonumber\\
&=\left| \int_{\mathbb{R}} e^{i\Psi(\lambda)}\eta_{\leq J}(\lambda) e^{-\varepsilon^2\lambda^2} m_1(\lambda,r){\bf c(-\lambda)}^{-1}d\lambda  \right| \nonumber\\
&\leq C |t|^{-1/2} \int_{\mathbb{R}} \left| \partial_\lambda \left( \eta_{\leq J} (\lambda)e^{-\varepsilon^2\lambda^2}m_1(\lambda,r){\bf c}(-\lambda)^{-1}\right) \right| d\lambda\nonumber\\
&\leq C |t|^{-1/2} \int_{\mathbb{R}} \left| \partial_\lambda \left( \eta_{\leq J}(\lambda) e^{-\varepsilon^2\lambda^2}\right) m_1(\lambda,r){\bf c}(-\lambda)^{-1} + \eta_{\leq J}(\lambda) e^{-\varepsilon^2\lambda^2}\partial_\lambda \left( m_1(\lambda,r){\bf c}(-\lambda)^{-1} \right) \right| d\lambda\nonumber\\
&\leq C |t|^{-1/2} \left( 2^J (1+2^J)^{\rho-1} \int_0^{2^J}\left|\partial_\lambda \left( \eta_{\leq J}(\lambda) e^{-\varepsilon^2\lambda^2}\right) \right|d\lambda  + \int_0^{2^J} \left( \lambda(1+\lambda)^{\rho-2} + (1+\lambda)^{\rho-1} \right) d\lambda  \right) \nonumber\\
&\leq C |t|^{-1/2} \begin{cases}
2^{\rho J} &\hbox{   if   } J\geq 0;\\
2^J &\hbox{   if   } J \leq 0.
\end{cases}
\end{align}
Above, we also have used the boundedness $\eta_{\leq J}(\lambda) e^{-\varepsilon^2\lambda^2}<1 $ and the fact that
\begin{align}\label{absolute}
\int_0^{2^J}\left|\partial_\lambda \left( \eta_{\leq J}(\lambda) e^{-\varepsilon^2\lambda^2}\right) \right|d\lambda = \left.\left(\mathrm{sgn}\left(\partial_\lambda \left( \eta_{\leq J}(\lambda) e^{-\varepsilon^2\lambda^2}\right)\right)\int\partial_\lambda \left( \eta_{\leq J}(\lambda) e^{-\varepsilon^2\lambda^2}\right)d\lambda \right)\right|_0^{2^J}<\infty.
\end{align}
Using \eqref{EstI}, we obtain that for $\dfrac{r}{|t|}\geq 1$ (hence $J\geq 0$), we have
\begin{align}\label{r/t>1}
I &\leq C|t|^{-1/2}2^{\rho J} \leq C2^{11}|t|^{-1/2}\lambda_{r,t}^\rho \notag\\
&\leq \tilde{C}|t|^{-1/2} \left( \frac{r}{|t|} \right)^{\rho/2} \leq \tilde{C}r^{\rho/2} |t|^{-\frac{n+1}{4}}\notag\\
& \leq \tilde{C}r^{\rho/2} |t|^{-\frac{n}{2}}.
\end{align}
The last inequality holds for $|t|\leq 1$. On the other hand, for $\dfrac{r}{|t|}\leq 1$ (hence $|t|\geq 1$), we have
\begin{align}\label{r/t<1}
I &\leq C|t|^{-1/2}2^J \leq C 2^{11} |t|^{-1/2}\lambda_{r,t} \notag\\
&\leq \hat{C} r|t|^{-3/2}.
\end{align}
Here, we have also used inequality \eqref{Est2^J} and the property of $J$ in inequalities \eqref{r/t>1} and \eqref{r/t<1}.


To estimate the terms $II_j \, (j\geq J+1)$, we first notice that by definition of $J$, the phase ${\bf \Psi}(\lambda)$ does not have any critical point on the support of $II_j$. Hence, we can use the oscillatory integral estimate \eqref{Osci2}, inequality \eqref{m1Est} and a change of variables $\lambda\mapsto 2^j\lambda$ to obtain that
\begin{align}
II_j &= \left| \int_{\mathbb{R}} e^{i\Psi(\lambda)-\varepsilon^2 \lambda^2}\eta_{j}(\lambda)m_1(\lambda,r){\bf c(-\lambda)}^{-1}d\lambda  \right|\nonumber\\
&\leq \frac{C}{\left(1+ 2^{j}\min\limits_{|\lambda|\in [2^{j-1},\, 2^{j+1}]} (|-t\partial_\lambda \psi(\lambda)|+r) \right)^n}2^j\int_{1/2}^2\sum_{k=0}^n \left| \partial_\lambda^k \left( \eta_0(\lambda)e^{-\varepsilon^2(2^j\lambda)^2} m_1(2^j\lambda,r){\bf c}(-2^j\lambda)^{-1}  \right)  \right|d\lambda\nonumber\\
&\leq \frac{C}{\left(1+ 2^{j}\left( r+ 2^{j-1}|t|\right)\right)^n} \sum_{k=0}^n 2^j \left( 2^j(1+2^j)^{\rho-1-k}  +(1+2^j)^{\rho-k}\right)\nonumber\\
&\leq \frac{C}{\left(1+ 2^jr+ 2^{2j}\dfrac{|t|}{2}\right)^n} \begin{cases}
2^{j(\rho+1)} &\hbox{   if   } j\geq 0;\\
2^{j} &\hbox{   if   } j \leq 0.
\end{cases}
\end{align}
Above, we have used the facts that $\left|\partial_\lambda^k(\eta_0(\lambda)e^{-\varepsilon^2(2^j\lambda)^2})\right|<\infty$ and $\int_{1/2}^2\left|\partial_\lambda^k(\eta_0(\lambda)e^{-\varepsilon^2(2^j\lambda)^2})\right|d\lambda <\infty$ since $\eta_0(\lambda)$ is smooth and compactly supported.
Therefore, if $J <0$ and $r\geq 1$, then $1\leq r \leq |t|$ and we have that
\begin{align}\label{est-II-1}
\sum_{j=J+1}^0 II_j &\leq \sum_{j=J+1}^0 \frac{2^j}{\left( 1+ 2^{2j}\dfrac{|t|}{2}\right)^n} \leq \sum_{j=J+1}^0 \frac{2^j}{\left( 1+ 2^{2j}\dfrac{|t|}{2}\right)^{1/2}} \nonumber\\
&\leq C\sum_{j=J+1}^0 \frac{2^j}{\left( 2^j|t|^{1/2}\right)^{1/2}} \leq C|t|^{-1/4}\sum_{j=J+1}^0 2^{j/2}\notag\\
& \leq C|t|^{-1/4}.
\end{align}
If $j\geq 0$, then we have
\begin{align}\label{est-II-2}
\sum_{j\geq 0} II_j &\leq \sum_{j\geq 0} \frac{C2^{\frac{j(n+1)}{2}}}{\left( 2^j + 2^{2j}\frac{|t|}{2} \right)^n} \leq C\left( 1 + \frac{|t|}{2} \right)^{-n}\sum_{j\geq 0} 2^{\frac{(1-n)j}{2}}  \notag\\
&\leq C\left( 1 + \frac{|t|}{2} \right)^{-n} \notag\\
&\leq C|t|^{-\frac{n}{2}}.
\end{align}

Combining inequalities \eqref{r/t>1}, \eqref{r/t<1}, \eqref{est-II-1} and \eqref{est-II-2}, we obtain the dispersive estimates on domain $\left\{ r\geq 1\right\}$ as follows
\begin{align}\label{est-r>1}
|I^+_\varepsilon(t,r)_{|_{\left\{ r\geq 1\right\}}}| &\leq C \left( |t|^{-\frac{n}{2}} + |t|^{-\frac{1}{4}}\right)(1+r)^{\rho/2+1} e^{-\frac{(n-1)r}{2}}\notag\\
&= C \left( |t|^{-\frac{n}{2}} + |t|^{-\frac{1}{4}} \right)(1+r)^{\frac{n+3}{4}} e^{-\frac{(n-1)r}{2}}\notag\\
&= \begin{cases}
C|t|^{-\frac{n}{2}} (1+r)^{\frac{n+3}{4}} e^{-\frac{(n-1)r}{2}}&\hbox{   if   }  0<|t| \leq 1;\\
C|t|^{-\frac{1}{4}}(1+r)^{\frac{n+3}{4}} e^{-\frac{(n-1)r}{2}} &\hbox{   if   } |t| \geq 1.
\end{cases}
\end{align}
At the end of proof, we are going to improve the decay to $|t|^{-3/2}$ for $|t|\geq 1$. By the same way, we also can establish that
\begin{equation}\label{est-r>1'}
|I^-_\varepsilon(t,r)_{|_{\left\{ r\geq 1\right\}}}|\leq \begin{cases}
C|t|^{-\frac{n}{2}} (1+r)^{\frac{n+3}{4}} e^{-\frac{(n-1)r}{2}}&\hbox{   if   }  0<|t| \leq 1;\\
C|t|^{-\frac{1}{4}}(1+r)^{\frac{n+3}{4}} e^{-\frac{(n-1)r}{2}} &\hbox{   if   } |t| \geq 1,
\end{cases}
\end{equation}
where $I^-_\varepsilon(t,r)_{|_{\left\{ r\geq 1\right\}}}$ is given by the same formula as $I^+_\varepsilon(t,r)_{|_{\left\{ r\geq 1\right\}}}$ but with $m_1(\lambda,r){\bf c}(-\lambda)^{-1}$ replaced by $m_1(-\lambda,r){\bf c}(\lambda)^{-1}$.

\underline{On the domain $\left\{0<r\leq 1\right\}$.}
By using the equalities \eqref{Sph2} and \eqref{Decom}, we arrive at
\begin{align}\label{TwoTerms}
|I^+_\varepsilon(t,r)_|{_{\left\{0<r\leq 1\right\}}}|&= \left| \int_{\mathbb{R}} e^{i\Psi(\lambda)-\varepsilon^2 \lambda^2}\eta_{\leq J}(\lambda) m_2(\lambda,r)|{\bf c(\lambda)}|^{-2}d\lambda  \right. \nonumber\\
&\left.+ \sum_{j>J} \int_{\mathbb{R}} e^{i\Psi(\lambda)-\varepsilon^2 \lambda^2}\eta_{j}(\lambda)m_2(\lambda,r)|{\bf c(\lambda)}|^{-2}d\lambda  \right| \nonumber\\
&\leq \left| \int_{\mathbb{R}} e^{i\Psi(\lambda)-\varepsilon^2 \lambda^2}\eta_{\leq J}(\lambda) m_2(\lambda,r)|{\bf c(\lambda)}|^{-2}d\lambda  \right| \nonumber\\
&+ \sum_{j>J}\left| \int_{\mathbb{R}} e^{i\Psi(\lambda)-\varepsilon^2 \lambda^2}\eta_{j}(\lambda)m_2(\lambda,r)|{\bf c(\lambda)}|^{-2}d\lambda  \right| \nonumber\\
&= \widetilde{I} + \sum_{j>J} \widetilde{II}_j,
\end{align}
where $\Psi(\lambda) = -t\psi(\lambda)+r\lambda$ and $J$ is again the smallest integer such that ${2^J\geq 2^{4}\lambda_{r,t}}$, with $\lambda_{r,t}$ given by \eqref{Est2^J}.

By using inequalities \eqref{HarEs} and \eqref{Sphere2}, we have
\begin{align}\label{m2Est}
\left|\partial_\lambda^k \left(m_2(\lambda,r)|{\bf c}(\lambda)|^{-2} \right) \right| & = \left|\partial_\lambda^k \left(m_2(\lambda,r) {\bf c}(\lambda)^{-1}{\bf c}(-\lambda)^{-1} \right) \right| \nonumber\\
&\leq C \sum_{k_1+k_2+k_3+k_4=k} \left|\left(\partial_\lambda^{k_1} m_2(\lambda,r)\right) \partial_{\lambda}^{k_2}(\lambda^2) \partial_\lambda^{k_3}\left(\lambda^{-1}{\bf c}(\lambda)^{-1} \right) \partial_\lambda^{k_4}\left( -\lambda^{-1}{\bf c}(-\lambda)^{-1} \right) \right| \nonumber\\
&\leq C \sum_{k_1+k_2+k_3+k_4=k} (1+r|\lambda|)^{-\rho} (1+|\lambda|)^{2(\rho-1)-k_1-k_3-k_4}\left|  \partial_\lambda^{k_2}(\lambda^2)\right| \nonumber\\
&\leq C (1+r|\lambda|)^{-\rho} (1+|\lambda|)^{2(\rho-1)} \left( |\lambda|^2(1+|\lambda|)^{-k} + \mathbb{I}_{k_2\geq 1}|\lambda|(1+|\lambda|)^{1-k}\right. \nonumber\\
&\hspace{8cm}\left.+ \mathbb{I}_{k_2\geq 2}(1+|\lambda|)^{2-k} \right)
\end{align}
for any integer number $k\in [0,\, n+2]$.

Now, using inequality \eqref{m2Est} for $k=0,\, 1$, oscillatory integral's estimate \eqref{Osci1} and the fact that the support of $\eta_{j\leq J}$ belongs to $[-2^J,\, 2^J]$, we obtain that
\begin{align}\label{EstI'}
\widetilde{I}&= \left| \int_{\mathbb{R}} e^{i\Psi(\lambda)-\varepsilon^2 \lambda^2}\eta_{\leq J}(\lambda) m_2(\lambda,r)|{\bf c(\lambda)}|^{-2}d\lambda  \right| \nonumber\\
&\leq C |t|^{-1/2} \int_{\mathbb{R}} \left| \partial_\lambda \left( \eta_{\leq J}(\lambda) e^{-\varepsilon^2\lambda^2} m_2(\lambda,r)|{\bf c}(\lambda)|^{-2}\right) \right| d\lambda\nonumber\\
&\leq C |t|^{-1/2} \int_{\mathbb{R}} \left| \partial_\lambda \left( \eta_{\leq J}(\lambda) e^{-\varepsilon^2\lambda^2} \right) m_2(\lambda,r)|{\bf c}(\lambda)|^{-2} + \eta_{\leq J}(\lambda) e^{-\varepsilon^2\lambda^2} \partial_\lambda \left( m_2(\lambda,r)|{\bf c}(-\lambda)|^{-2} \right) \right| d\lambda\nonumber\\
&\leq C |t|^{-1/2} \left( 2^{2J} (1+r2^J)^{-\rho}(1+2^J)^{2(\rho-1)} \int_0^{2^J} \left|  \partial_\lambda\left( \eta_{\leq J} e^{-\varepsilon^2\lambda^2}\right)\right|d\lambda\right.\notag\\
&\hspace{4cm}\left. + \int_0^{2^J} (1+r\lambda)^{-\rho} (1+\lambda)^{2(\rho-1)} \left( \lambda^2(1+\lambda)^{-1} + \lambda \right) d\lambda  \right) \nonumber\\
&\leq C |t|^{-1/2} \begin{cases}
2^{\rho J}r^{-\rho} &\hbox{   if   } J\geq 0,\, r2^J \geq 1\\
2^{2\rho J} &\hbox{   if   } J \geq 0,\, r2^J \leq 1\\
2^{2J} &\hbox{   if   } J \leq 10.
\end{cases}
\end{align}
As in \eqref{absolute}, again we have used the fact that $\eta_{\leq J}e^{-\varepsilon^2\lambda^2}<1$ and $\int_0^{2^J} \left|  \partial_\lambda\left( \eta_{\leq J} e^{-\varepsilon^2\lambda^2}\right)\right|d\lambda<\infty$. Now, we consider three following cases:
\begin{itemize}
\item[$\bullet$] If $\dfrac{r}{|t|} \geq 1$ (hence $|t| \leq 1$) and $2^J r \geq 1$, then $r^3\geq C|t|$ and using \eqref{Est2^J}, we have
\begin{align}\label{case1}
\widetilde{I} &\leq C |t|^{-1/2} 2^{\rho J}r^{-\rho} \leq C |t|^{-1/2}\left( \frac{r}{|t|} \right)^{\rho/2}r^{-\rho} \notag\\
&\leq  C |t|^{-1/2}|t|^{-\rho/2}r^{-\rho/2}\leq  C |t|^{-1/2}|t|^{-\rho/2}|t|^{-\rho/6} = C|t|^{-\frac{2n+1}{6}} \notag\\
&\leq C |t|^{-\frac{n}{2}}.
\end{align}
\item[$\bullet$] If $\dfrac{r}{|t|}\geq 1$  and $2^Jr \leq 1$, then $r^3\leq C|t|$ and using again \eqref{Est2^J}, we have
\begin{align}\label{case2}
\widetilde{I} &\leq C |t|^{-1/2} 2^{2\rho J} \leq C |t|^{-1/2}\left( \frac{r}{|t|} \right)^{\rho} \leq  C |t|^{-1/2}|t|^{-\rho}r^{\rho}\notag\\
&\leq  C |t|^{-1/2}|t|^{-\rho}|t|^{\rho/3} = C|t|^{-\frac{2n+1}{6}} \notag\\
&\leq C |t|^{-\frac{n}{2}}.
\end{align}
\item[$\bullet$] If $\dfrac{r}{|t|}\leq 1$, then we have
\begin{align}\label{case3}
\widetilde{I} &\leq C |t|^{-1/2} 2^{2J} \leq C |t|^{-1/2}\left( \frac{r}{|t|} \right)^{2}\notag\\
& \leq  C \left(|t|^{-3/2} + |t|^{-\frac{n}{2}} \right),
\end{align}
because $|t|^{-5/2}\leq |t|^{-3/2}$ if $t\geq 1$ and $|t|^{-1/2} \leq |t|^{-\frac{2n+1}{6}} \leq |t|^{-\frac{n}{2}}$ if $|t|\leq 1$.
\end{itemize}
Combining inequalities \eqref{EstI'}, \eqref{case1},\eqref{case2} and \eqref{case3}, we obtain that
\begin{equation}\label{TildeI}
\widetilde{I} \leq C\left( |t|^{-3/2} + |t|^{-\frac{n}{2}} \right).
\end{equation}

We remain to estimate the terms $\widetilde{II}_j$ for $j \geq J+1$. By using the oscillatory integral's estimate \eqref{Osci2}, inequality \eqref{m2Est} and a change of variables $\lambda\mapsto 2^j\lambda$, we obtain the
\begin{align}\label{fin1}
\widetilde{II}_j &= \left| \int_{\mathbb{R}} e^{i\Psi(\lambda)-\varepsilon^2 \lambda^2}\eta_{j}(\lambda)m_2(\lambda,r)|{\bf c(\lambda)}|^{-2}d\lambda  \right|\nonumber\\
&\leq \frac{C}{\left(1+ 2^{j}\min\limits_{|\lambda|\in [2^{j-1},\, 2^{j+1}]} (|-t\partial_\lambda \psi(\lambda)|+r)\right)^n}2^j\int_{1/2}^2\sum_{k=0}^n \left| \partial_\lambda^k \left( \eta_0(\lambda) e^{-\varepsilon^2(2^j\lambda)^2} m_2(2^j\lambda,r)|{\bf c}(2^j\lambda)|^{-2}  \right)  \right|d\lambda\nonumber\\
&\leq \frac{C}{\left(1+ 2^{j}\left( r+ 2^{j-1}|t|\right)\right)^n} \sum_{k=0}^n 2^j (1+r2^j)^{-\rho}(1+2^j)^{2(\rho-1)} \left( 2^{2j}(1+2^j)^{-k}  + 2^j(1+2^j)^{1-k}  + (1+2^j)^{2-k} \right)\nonumber\\
&\leq \frac{C}{\left(1+ 2^{j}r + 2^{2j}\dfrac{|t|}{2}\right)^n} \begin{cases}
2^{j(\rho+1)}r^{-\rho} &\hbox{   if   } j\geq 0,\, r2^j\geq 1,\\
2^{j(2\rho+1)} &\hbox{   if   } j \geq 0,\, r2^j \leq 1,\\
2^{3j}  &\hbox{   if   } j \leq 0,
\end{cases}
\end{align}
where again we have used the facts that $\left|\partial_\lambda^k(\eta_0(\lambda)e^{-\varepsilon^2(2^j\lambda)^2})\right|<\infty$
and $\int_{1/2}^2\left|\partial_\lambda^k(\eta_0(\lambda)e^{-\varepsilon^2(2^j\lambda)^2})\right|d\lambda<\infty$.
Since $\eta_0(\lambda)$ is compactly supported in $[-2,\, -1/2] \cup [1/2,\, 2]$, we have that $\widetilde{II}_j$ has compact support in $[-2^{j+1},\, -2^{j-1}] \cup [2^{j-1},\, 2^{j+1}]$. Also, in the case $j\geq 0$, note that $\lim\limits_{j\to +\infty}\widetilde{II}_j =0$.

For the case $J < 0$, we have
\begin{align}
\widetilde{II}_j &\leq C 2^j \int_{1/2}^2 |m_2(2^j\lambda,r)||{\bf c}(2^j\lambda)|^{-2}d\lambda\leq C 2^{3j}.
\end{align}
Therefore, let $J_1$ be the largest integer such that $2^{J_1}\leq |t|^{-1/2}$, we have
\begin{equation}\label{fin2}
\sum_{j=J+1}^{J_1} \widetilde{II}_j \leq C|t|^{-3/2} \leq C|t|^{-n/2},
\end{equation}
where the last inequality holds if $0\leq |t|\leq 1$.
Now, by using inequality \eqref{fin1} and the fact that $2^{2j}|t|\geq 1$ (hence $|t|\geq 1$) for $j\geq J_1+1$, we have
\begin{align}\label{fin3}
\sum_{j=J_1+1}^0\widetilde{II}_j &\leq \sum_{j=J_1+1}^0 \dfrac{C2^{3j}}{\left( 1+ 2^jr+ 2^{2j}\dfrac{|t|}{2} \right)^n} \leq \sum_{j=J_1+1}^0 \dfrac{C2^{3j}}{\left( 2^{2j}\dfrac{|t|}{2} \right)^{3/4}}\notag\\
& \leq C|t|^{-3/4}\sum_{j=J_1+1}^0 2^{3j/2}\notag\\
& \leq C|t|^{-3/4}.
\end{align}
Combining \eqref{fin2} and \eqref{fin3}, we establish for $J < 0$ that
\begin{equation}\label{fin4}
\sum_{J+1}^0 \widetilde{II}_j \leq C\left(|t|^{-3/4}+ |t|^{-n/2}\right).
\end{equation}

For the case $j\geq 1$, we use \eqref{fin1} to get
\begin{align}\label{fin5}
\sum_{j\geq 1}\widetilde{II}_j &\leq \sum_{j\geq 1}\frac{C}{\left( 1+ 2^jr+ 2^{2j}\dfrac{|t|}{2}\right)^n} \begin{cases}
2^{j(\rho+1)}r^{-\rho} &\hbox{   if   } j\geq 1,\, r2^j\geq 1\\
2^{j(2\rho+1)} &\hbox{   if   } j \geq 1,\, r2^j \leq 1
\end{cases}\notag\\
&\leq  \sum_{j\geq 1}\dfrac{C2^{jn}}{\left(1+ 2^jr+ 2^{2j}\dfrac{|t|}{2}\right)^n} \notag\\
&\leq  \sum_{j\geq 1}\dfrac{C2^{jn}}{\left( 1+2^{2j}\dfrac{|t|}{2}\right)^n}.
\end{align}
If $|t|\geq 1$, then combine with inequality \eqref{fin5}, we have
\begin{align}\label{fin6}
\sum_{j\geq 1}\widetilde{II}_j &\leq \sum_{j\geq 1}\dfrac{C2^{jn}}{2^{2jn}|t|^{3/2}} \leq C|t|^{-3/2}\sum_{j\geq 1}2^{-jn} \notag\\
&\leq C|t|^{-3/2}.
\end{align}
If $0<|t|\leq 1$, we consider three following cases:
\begin{itemize}
\item[$\bullet$] If $\dfrac{r}{|t|}\geq \dfrac{\sqrt{\rho^2+1}}{r}\geq \sqrt{\rho^2+1}$, then by using \eqref{Est2^J} we get $2^{J}\geq 2^{10}\dfrac{r}{|t|}\geq 2^{10}\dfrac{\rho^2+1}{r}$ (hence $J>0$) and  we have
\begin{align}\label{fin7}
\sum_{j\geq J+1}\widetilde{II}_j &\leq  \sum_{j\geq J+1}\dfrac{C2^{j(\rho+1)}r^{-\rho}}{\left(1 + 2^{2j}\dfrac{|t|}{2}\right)^n} \leq C\sum_{j\geq J+1} r^{-\rho}|t|^{-n}2^{-j(2n-\rho-1)}\nonumber\\
&\leq  C|t|^{-n/2}\sum_{j\geq J+1} r^{-\rho-\frac{n}{2}}\left( \frac{r}{|t|}\right)^{n/2}2^{-j(2n-\rho-1)}\nonumber\\
&\leq C|t|^{-n/2}\sum_{j\geq J+1} r^{-n+\frac{1}{2}}\left( \frac{r}{|t|}\right)^{n/2}2^{-j(2n-\rho-1)}\nonumber\\
&\leq C|t|^{-n/2} \sum_{j\geq J+1} 2^{\left(n-\frac{1}{2}\right)J} 2^{\frac{Jn}{2}}2^{-j(2n-\rho-1)} \nonumber\\
&\leq C|t|^{-n/2}\sum_{j\geq J+1} 2^{\frac{J(3n-1)}{2}}2^{-\frac{j(3n-1)}{2}} \leq  C|t|^{-n/2}\sum_{j\geq J+1} 2^{\frac{-(j-J)(3n-1)}{2}} \nonumber\\
&\leq C|t|^{-n/2}.
\end{align}
\item[$\bullet$] If $\sqrt{\rho^2+1} \leq \dfrac{r}{|t|}\leq \dfrac{\sqrt{\rho^2+1}}{r}$, then $\dfrac{r^2}{\rho^2+1} \leq |t| \leq \dfrac{r}{\rho^2+1}$  and by using  again \eqref{Est2^J} we get
$$\frac{\rho^2+1}{r} \geq 2^J \geq 2^{10}\dfrac{r}{|t|} \geq 2^{J-1}.$$
Using \eqref{fin5} we obtain that
\begin{align}\label{fin8}
\sum_{j \geq J+1}\widetilde{II}_j &\leq  \sum_{j\geq J+1} \dfrac{C2^{j(2\rho+1)}}{\left( 1+ 2^{2j}\dfrac{|t|}{2} \right)^n} \leq  \sum_{j\geq J+1} \dfrac{C2^{jn}}{\left( 1+ 2^{2j}\dfrac{|t|}{2} \right)^n} \nonumber\\
&\leq C|t|^{-n/2} \sum_{j\geq J+1} C2^{-jn}|t|^{-n/2} \notag\\
&\leq   C|t|^{-n/2} \sum_{j\geq J+1} C2^{-jn}|r|^{-n}.
\end{align}
Observe that, if $t\to 0$ with the decay $r^{2}$, then $r^{-1}\simeq\dfrac{r}{|t|} \leq 2^J$ and inequality \eqref{fin8} leads to
\begin{align}\label{fin9}
\sum\limits_{j\geq J+1}\widetilde{II}_j &\leq C|t|^{-n/2}\sum_{j\geq J+1} 2^{-(j-J)n} \notag\\
&\leq C|t|^{-n/2}.
\end{align}
If $t\to 0$ with the decay $r$, i.e., $|t|\simeq r\to 0$, then $J$ is bounded by $0\leq J \leq 10$. We treat this case ($J$ boundedness) as the one below.

\item[$\bullet$] If $\dfrac{r}{|t|} \leq \sqrt{\rho^2+1}$, then  we have $\dfrac{r}{\sqrt{\rho^2+1}} \leq |t| \leq 1$ and \eqref{Est2^J} leads to $0<J \leq 11 + [\log_2\sqrt{\rho^2+1}]$ and $2^J\geq 2^{10}\sqrt{\dfrac{r}{|t|}}$. We are considering the situation $|t|\simeq r \to 0$. Then, for a given $|t|\ll 1$, there exists a smallest integer $J_2 \gg J+1$ such that $|t|^{-1}\leq 2^{2J_2}$. We interest only the case that $J$ (hence $J_2$) is finite. Therefore,
\begin{align}\label{fin10}
\sum_{j \geq J+1}\widetilde{II}_j &\leq C|t|^{-n/2} \sum_{j\geq J+1} 2^{-jn}|t|^{-n/2} \nonumber\\
&\leq C|t|^{-n/2} \left( \sum_{j = J+1}^{J_2} 2^{-jn}|t|^{-n/2} + \sum_{j \geq J_2+1} 2^{-jn}|t|^{-n/2} \right)\nonumber\\
&\leq C|t|^{-n/2} \left(  \sum_{j = J+1}^{J_2} 2^{-(j-J_2)n} + \sum_{j \geq J_2+1} 2^{-(j-J_2)n}  \right) \nonumber\\
&\leq  C|t|^{-n/2} \left( 2^{(J_2-J)n}-1 + 1\right)\nonumber\\
&\leq C2^{(J_2-J)n}|t|^{-n/2} \leq C|t|^{-n/2}.
\end{align}
\end{itemize}

Combining inequalities \eqref{TildeI}, \eqref{fin4} and \eqref{fin6}-\eqref{fin10} yields
\begin{align}\label{est-r<1}
|I^+_\varepsilon(t,r)_{|_{\left\{ 0<r\leq 1\right\}}}| &\leq C\left(  |t|^{-\frac{3}{4}} + |t|^{-\frac{n}{2}} \right)\notag\\
&= \begin{cases}
C|t|^{-\frac{n}{2}},&\hbox{   if   }  0<|t| \leq 1,\\
C|t|^{-\frac{3}{4}}, &\hbox{   if   } |t| \geq 1.
\end{cases}
\end{align}
By the same way, we can obtain that
\begin{equation}\label{est-r<1'}
|I^-_\varepsilon(t,r)_{|_{\left\{ 0<r\leq 1\right\}}}| \leq \begin{cases}
C|t|^{-\frac{n}{2}},&\hbox{   if   }  0<|t| \leq 1,\\
C|t|^{-\frac{3}{4}}, &\hbox{   if   } |t| \geq 1,
\end{cases}
\end{equation}
where $I^-_\varepsilon(t,r)_{|_{\left\{ 0<r\leq 1\right\}}}$ is given by the same formula as $I^+_\varepsilon(t,r)_{|_{\left\{ 0<r\leq 1\right\}}}$ with $m_2(\lambda,r)|{\bf c}(\lambda)|^{-2}$ replaced by $m_2(-\lambda,r)|{\bf c}(-\lambda)|^{-2}$.

Combining estimates \eqref{est-r>1}-\eqref{est-r>1'} and \eqref{est-r<1}-\eqref{est-r<1'} for $|I_\varepsilon(t,r)|$ on the domains $\left\{r\geq 1\right\}$ and  $\left\{0<r \leq 1 \right\}$, respectively, we get the pointwise estimate for $I_\varepsilon(t,r)$, for all $(t,r)$, as follows:
\begin{align}\label{Disper}
|I_{\varepsilon}(t,r)| &\leq |I^+_{\varepsilon}(t,r)| + |I^-_{\varepsilon}(t,r)|\notag\\
&\leq C\begin{cases}
|t|^{-n/2}r^{(n+3)/4}e^{-(n-1)r/2}, &\hbox{   if   } |t|\leq 1,\\
|t|^{-1/4}r^{(n+3)/4}e^{-(n-1)r/2}, &\hbox{   if   } |t|\geq 1.
\end{cases}
\end{align}

Finally, we improve the factor decay in \eqref{Disper} when the time is very large $|t|\gg 1$ by $|t|^{-3/2}$. Rather than employing a dyadic decomposition, we express $I_{\varepsilon}(t,r)$ as
\begin{align}
I_{\varepsilon}(t,r) &= \int_\mathbb{R}\chi^0_t(\lambda)e^{-it\psi(\lambda)-\varepsilon^2\lambda^2}{\bf \Phi}_\lambda (r)|{\bf c}(\lambda)|^{-2}d\lambda\notag\\
&+ \int_\mathbb{R}\chi^\infty_t(\lambda)e^{-it\psi(\lambda)-\varepsilon^2\lambda^2}{\bf \Phi}_\lambda (r)|{\bf c}(\lambda)|^{-2}d\lambda\notag\\
&= I_0(t,r) + I_\infty(t,r),
\end{align}
where $\chi_t^0(\lambda) = \chi(\sqrt{t}\lambda)$  is an even cut-off function such that
\begin{equation}
\chi(\lambda) =
\begin{cases}
1, &\hbox{   if   } |\lambda|\leq 1,\\
0, &\hbox{   if   } |\lambda|\geq 2,
\end{cases}
\end{equation}
and $\chi_t^\infty(\lambda) = 1-\chi_t^0(\lambda)$.

Using the fact that $|{\bf \Phi}_\lambda(r)| \leq |{\bf \Phi}_0(r)| \leq Ce^{-\rho r}$ and $|{\bf c}(\lambda)|^{-2} \leq C\lambda^2$, we have
\begin{equation}\label{far1}
|I_0(t,r)| \leq C|t|^{-3/2}e^{-\rho r}.
\end{equation}
Moreover, taking integral by parts $L=k_0+k_1+k_2+k_3$ times, we obtain
\begin{align}\label{far2}
e^{\rho r/2}|I_\infty(t,r)| &\leq C|t|^{-L}\int_\mathbb{R}\left| e^{-it\psi(\lambda)}\left( \frac{\partial}{\partial\lambda}\circ\frac{1}{\lambda}\right)^L \left( \chi_t^\infty(\lambda)|{\bf c}(\lambda)|^{-2} e^{ir}\right) \right|d\lambda\nonumber\\
&\leq C|t|^{-L}\int_\mathbb{R} \sum_{k_0+k_1+k_2+k_3=L}|t|^{k_0/2}|\lambda|^{-L-k_1}r^{k_3}\begin{cases}
|\lambda|^2 &\hbox{   if   } |\lambda|\leq 1;\\
|\lambda|^{n-1} &\hbox{   if   } |\lambda|\geq 1
\end{cases}d\lambda\nonumber\\
&\leq \sum_{k_0+k_1+k_2+k_3=L,\, k_0\geq 1}|t|^{-3/2}r^{k_3}\nonumber\\
&+ \sum_{k_1+k_2+k_3=L}|t|^{-L}r^{k_3}\left( \int_{|t|^{-1/2}\leq \lambda \leq 1}|\lambda|^{2-L-k_1}d\lambda + \int_{|\lambda|\geq 1}|\lambda|^{n-1-L-k_3}d\lambda  \right)\nonumber\\
&\leq |t|^{-3/2}(1+r)^L,
\end{align}
provided that $L > \max\left\{n, 3/2\right\}$. Here, we used that $|\lambda|\simeq |t|^{-1/2}$ if $k_0\geq 1$ and $\partial_\lambda\psi(\lambda)\geq C|\lambda|$. The inequalities \eqref{far1} and \eqref{far2} complete our proof.
\end{proof}

\bigskip

In order to deduce $L^{q}(\mathbb{H}^{n})$-estimates for the GBq-prototype group $e^{-itP}$, we need to use an expression of the $L^{q}(\mathbb{H}^{n})$-norm for radial functions in $\mathbb{H}^{n}$ (see the formula in the proof of \cite[Corollary 3.3]{Anker2009} with noting that $L^{q}(\mathbb{H}^{n}) = L^{q,q}(\mathbb{H}^{n})$):
\begin{equation}
\left\Vert f\right\Vert _{L^{q}(\mathbb{H}^{n})}\simeq\left(  \int_{0}%
^{1}(f(r))^{q}r^{n-1}dr\right)^{1/q}+\left(  \int_{1}^{\infty}%
(f(r))^{q}e^{(n-1)r}dr\right)^{1/q}.\label{normLq-1}%
\end{equation}
The norm of $I(t,r)=\lim\limits_{\varepsilon\rightarrow
0}I_{\varepsilon}(t,r)$ in $L^{q}(\mathbb{H}^{n})$ can be estimated as follows.

\begin{lemma}
\label{KerEst1} Let $2<q\leq\infty$. Then, we have that
\begin{equation}
\left\Vert I(t,r)\right\Vert _{L^{q}(\mathbb{H}^{n})}\leq C%
\begin{cases}
|t|^{-n/2}\hbox{   if   }|t|\leq1,\\
|t|^{-3/2}\hbox{   if   }|t|\geq1.
\end{cases}
\label{est-St-Lq}%
\end{equation}

\end{lemma}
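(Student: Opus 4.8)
The plan is to combine the uniform-in-$\varepsilon$ pointwise bound of Lemma~\ref{KerEst} with the radial $L^{q}$-norm formula~\eqref{normLq-1}. First I would note that the estimate in Lemma~\ref{KerEst} holds uniformly in $\varepsilon>0$, so passing to the limit in $I(t,r)=\lim_{\varepsilon\to0}I_{\varepsilon}(t,r)$ preserves it, i.e. $|I(t,r)|=\lim_{\varepsilon\to 0}|I_{\varepsilon}(t,r)|$ obeys
\[
|I(t,r)|\leq C\,A(t)\,r^{(n+3)/4}e^{-(n-1)r/2},\qquad
A(t)=\begin{cases}|t|^{-n/2}, & |t|\leq1,\\ |t|^{-3/2}, & |t|\geq1.\end{cases}
\]
Since the time factor $A(t)$ is independent of $r$, it factors out of the spatial norm, and it suffices to show that the fixed radial profile $r\mapsto r^{(n+3)/4}e^{-(n-1)r/2}$ has finite $L^{q}(\mathbb{H}^{n})$-norm with a bound depending only on $n$ and $q$.

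Next I would insert this profile into~\eqref{normLq-1} and estimate the two regions separately. On $[0,1]$ the relevant integral is
\[
\int_{0}^{1}\bigl(r^{(n+3)/4}e^{-(n-1)r/2}\bigr)^{q}\,r^{n-1}\,dr
\;\lesssim\;\int_{0}^{1}r^{\,q(n+3)/4+n-1}\,dr,
\]
which is finite because $e^{-(n-1)r/2}\leq1$ on $[0,1]$ and the exponent $q(n+3)/4+n-1$ is positive. On $[1,\infty)$ the relevant integral is
\[
\int_{1}^{\infty}\bigl(r^{(n+3)/4}e^{-(n-1)r/2}\bigr)^{q}\,e^{(n-1)r}\,dr
=\int_{1}^{\infty}r^{\,q(n+3)/4}\,e^{(n-1)\left(1-\frac{q}{2}\right)r}\,dr.
\]
Here the hypothesis $q>2$ is exactly what is needed: it forces the exponent $(n-1)\bigl(1-\tfrac{q}{2}\bigr)$ to be strictly negative, so the net exponential decay dominates the polynomial factor $r^{q(n+3)/4}$ and the integral converges to a finite constant depending only on $n$ and $q$.

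Combining the two regions through~\eqref{normLq-1} yields
\[
\bigl\|\,r^{(n+3)/4}e^{-(n-1)r/2}\,\bigr\|_{L^{q}(\mathbb{H}^{n})}\leq C(n,q)<\infty,
\]
and hence $\left\Vert I(t,r)\right\Vert_{L^{q}(\mathbb{H}^{n})}\leq C\,A(t)$, which is precisely~\eqref{est-St-Lq}. The endpoint $q=\infty$ I would treat directly via $\left\Vert I(t,\cdot)\right\Vert_{L^{\infty}}\leq C\,A(t)\,\sup_{r>0}r^{(n+3)/4}e^{-(n-1)r/2}$, the supremum being finite since the profile vanishes at $r=0$ and decays exponentially as $r\to\infty$. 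I do not expect any serious obstacle in this argument; the single point requiring care is the convergence of the integral over $[1,\infty)$, where the exponential volume growth $e^{(n-1)r}$ of $\mathbb{H}^{n}$ must be overcome by the kernel decay $e^{-q(n-1)r/2}$, and this is guaranteed precisely by the restriction $q>2$.
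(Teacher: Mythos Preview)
Your proposal is correct and follows essentially the same route as the paper: pass the uniform-in-$\varepsilon$ pointwise bound of Lemma~\ref{KerEst} to the limit, factor out the time decay $A(t)$, and then check via~\eqref{normLq-1} that the radial profile $r\mapsto r^{(n+3)/4}e^{-(n-1)r/2}$ has finite $L^{q}(\mathbb{H}^{n})$-norm for $2<q\leq\infty$. The paper simply asserts this last finiteness, whereas you spell out the two integrals and correctly identify $q>2$ as the condition making the $[1,\infty)$ piece converge.
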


\begin{proof} By using the $\varepsilon$-uniform estimates for
$I_{\varepsilon}$ in Lemma \ref{KerEst}, we obtain that
\begin{equation}
|I(t,r)|\leq Cr^{\frac{n+3}{4}}e^{-\frac{(n-1)r}{2}}%
\begin{cases}
|t|^{-\frac{n}{2}} & \hbox{   if   }|t|\leq1,\\
|t|^{-\frac{3}{2}} & \hbox{   if   }|t|>1.
\end{cases}
\label{est-aux-St-1}%
\end{equation}
By using \eqref{normLq-1}, for $f(r)=r^{\frac{n+3}{4}}e^{-\frac{(n-1)r}{2}}$
and  $2<q\leq\infty$, we can see that $\left\Vert f\right\Vert _{L^{q}%
(\mathbb{H}^{n})}<\infty$. This yields the result in our lemma.
\end{proof}

\subsection{Lorentz spaces and dispersive estimates}\label{S32}

We start by providing a brief primer on Lorentz. For further details, we refer
the reader to \cite{BS},\cite{BeLo}. For $\Omega\subset$ $\mathbb{H}^{n}$ and
$1\leq p\leq\infty,$ \ we denote by $L^{p}(\Omega)$ the space of all $L^{p}%
$-integrable functions on $\Omega.$ Let $1<p_{1}<p_{2}\leq\infty,$ $1\leq
d\leq\infty$ and $\theta\in(0,1)$ be such that $\dfrac{1}{p}=\dfrac{1-\theta
}{p_{1}}+\dfrac{\theta}{p_{2}}.$ Considering the interpolation functor
$(\cdot,\cdot)_{\theta,d}$, the Lorentz space $L^{(p,d)}(\Omega)$ can be
recovered as the interpolation space $\left(  L^{p_{1}}(\Omega),L^{p_{2}%
}(\Omega)\right)  _{\theta,d}=L^{(p,d)}(\Omega)$ with the interpolation norm
$\left\Vert \cdot\right\Vert _{(p,d)}$ induced by $(\cdot,\cdot)_{\theta,d}.$
Special cases that are in the family of Lorentz spaces are the Lebesgue one
$L^{p}(\Omega)=L^{(p,p)}(\Omega)$ and the so-called Marcinkiewicz space
$L^{(p,\infty)}(\Omega)$. This last space is also known as weak-$L^{p}%
(\Omega).$

A direct consequence of the reiteration theorem (see \cite[Theorem
3.5.3]{BeLo}) is the interpolation property
\begin{equation}
\left(  L^{(p_{1},d_{1})}(\Omega),L^{(p_{2},d_{2})}(\Omega)\right)
_{\theta,r}=L^{(p,d)}(\Omega),\label{interp1}%
\end{equation}
where $1<p_{1}<p_{2}\leq\infty$, $1\leq d_{1},d_{2},d\leq\infty,$
$0<\theta<1,$ and $\dfrac{1}{p}=\dfrac{1-\theta}{p_{1}}+\dfrac{\theta}{p_{2}%
}.$ Next we recall that the product operador works well in the Lorentz
setting. More precisely, letting $1<p_{1},p_{2},p_{3}\leq\infty$ and $1\leq
d_{1},d_{2},d_{3}\leq\infty$ with $\dfrac{1}{p_{3}}=\dfrac{1}{p_{1}}+\dfrac
{1}{p_{2}}$ and $\dfrac{1}{d_{1}}+\dfrac{1}{d_{2}}\geq\dfrac{1}{d_{3}}$, we
have the H\"{o}lder-type inequality%
\begin{equation}
\left\Vert fg\right\Vert _{{(p_{3},d_{3})}}\leq C\left\Vert f\right\Vert
_{{(p_{1},d_{1})}}\left\Vert g\right\Vert _{{(p_{2},d_{2})}},\label{Holder}%
\end{equation}
where $C>0$ is a universal constant. Finally, for $1\leq d_{1}\leq p\leq
d_{2}\leq\infty,$ we have the following chain of continuous inclusions
\begin{equation}
L^{(p,1)}(\Omega)\subset L^{(p,d_{1})}(\Omega)\subset L^{p}(\Omega)\subset
L^{(p,d_{2})}(\Omega)\subset L^{(p,\infty)}(\Omega).\label{Inclusion}%
\end{equation}


If $f$ is a positive, radially decreasing function on $\mathbb{H}^{n}$, then
its norm in the Lorentz space $L^{p,d}(\mathbb{H}^{n})$ can be computed
explicitly as follows (see \cite[the proof of Corollary 3.3]{Anker2009}):
\begin{equation}
\left\Vert f\right\Vert _{L^{p,d}(\mathbb{H}^{n})}\simeq\left(  \int_{0}%
^{1}(f(r))^{d}r^{\frac{dn}{p}-1}dr\right)  ^{\frac{1}{d}}+\left(  \int
_{1}^{\infty}(f(r))^{d}e^{\frac{d(n-1)}{p}r}dr\right)  ^{\frac{1}{d}%
}\label{normLpr}%
\end{equation}
for $1\leq d<\infty$, and for $d=\infty$
\begin{equation}
\left\Vert f\right\Vert _{L^{p,\infty}(\mathbb{H}^{n})}\simeq\sup
_{0<r<1}r^{\frac{n}{p}}f(r)+\sup_{r\geq1}e^{\frac{n-1}{p}r}f(r).\label{normLp}%
\end{equation}

Using Lemma \ref{KerEst1} and Kunze-Stein inequality we obtain $L^{q^{\prime}}-L^{q}$-dispersive
estimates for the GBq-prototype group $e^{-itP}$ in the following theorem.

\begin{theorem}
\label{Dispersive} Let  $2<q\leq\infty$ and let $q^{\prime}$ stands for its conjugate exponent, i.e., $\frac{1}{q}+\frac{1}{q^{\prime}}=1$. Then, there exists a constant $C>0$ such
that
\begin{equation}
\left\Vert e^{-itP}\right\Vert _{L^{q^{\prime}}(\mathbb{H}^{n})\rightarrow
L^{q}(\mathbb{H}^{n})}\leq C%
\begin{cases}
|t|^{-\frac{n}{2}\left(  1-\frac{2}{q}\right)  } & \hbox{   if   }|t|\leq1,\\
|t|^{-\frac{3}{2}} & \hbox{   if   }|t|>1.
\end{cases}
\label{est-group-Lq-1}%
\end{equation}

\end{theorem}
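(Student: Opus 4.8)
The plan is to combine the convolution structure of the group with the kernel bound of Lemma~\ref{KerEst1}, treating the regimes $|t|\le 1$ and $|t|>1$ separately. Since $e^{-itP}$ is the radial (spherical) Fourier multiplier with unimodular symbol $e^{-it|\xi|\langle\xi\rangle}=e^{-it\psi(\lambda)}$, it acts on $\mathbb{H}^n$ by convolution against the radial kernel $I(t,\cdot)$ from \eqref{SchroOP}, that is $e^{-itP}f=f*I(t,\cdot)$. I would first isolate two endpoint bounds. On $L^2$, unimodularity of the symbol together with the Plancherel theorem on $\mathbb{H}^n$ shows that $e^{-itP}$ is an isometry, so $\|e^{-itP}\|_{L^2\to L^2}\le 1$ for all $t$. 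At the other endpoint, the pointwise inequality $|e^{-itP}f(x)|\le \|I(t,\cdot)\|_{L^\infty}\|f\|_{L^1}$ yields $\|e^{-itP}\|_{L^1\to L^\infty}\le\|I(t,\cdot)\|_{L^\infty}$, which by Lemma~\ref{KerEst1} with $q=\infty$ is at most $C|t|^{-n/2}$ for $|t|\le 1$ (and $C|t|^{-3/2}$ for $|t|>1$).

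For the short-time regime $|t|\le 1$, I would interpolate these endpoints by the Riesz--Thorin theorem with $\theta=2/q$: this maps the pair $(L^1\to L^\infty,\,L^2\to L^2)$ onto $L^{q'}\to L^{q}$ (one checks $\tfrac{1}{p_\theta}=1-\tfrac1q=\tfrac1{q'}$ for the source exponent and $\tfrac{1}{q_\theta}=\tfrac1q$ for the target), and the interpolated operator norm is bounded by $\big(C|t|^{-n/2}\big)^{1-2/q}\cdot 1^{2/q}=C|t|^{-\frac{n}{2}(1-\frac{2}{q})}$, which is exactly the claimed short-time rate.

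For the long-time regime $|t|>1$, interpolation is not enough: it would only produce the weaker rate $|t|^{-\frac{3}{2}(1-\frac{2}{q})}$. Here the Kunze--Stein phenomenon is the decisive tool. For a radial kernel $\kappa$ and $2<q\le\infty$ it provides the convolution inequality $\|f*\kappa\|_{L^{q}(\mathbb{H}^n)}\le C\|f\|_{L^{q'}(\mathbb{H}^n)}\|\kappa\|_{L^{q}(\mathbb{H}^n)}$, which improves on Young's inequality (that would require $\kappa\in L^{q/2}$) by allowing the condition $\kappa\in L^{q}$ instead, a gain stemming from the exponential volume growth of $\mathbb{H}^n$. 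Applying this with $\kappa=I(t,\cdot)$ and invoking Lemma~\ref{KerEst1} for $|t|>1$ gives $\|e^{-itP}\|_{L^{q'}\to L^{q}}\le C\|I(t,\cdot)\|_{L^{q}}\le C|t|^{-3/2}$, uniformly in $q$. Combining the two regimes yields \eqref{est-group-Lq-1}.

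The main obstacle is precisely this long-time estimate: obtaining the full, $q$-independent decay $|t|^{-3/2}$ rather than the interpolation rate requires the genuine gain from Kunze--Stein, and this in turn relies on $\|I(t,\cdot)\|_{L^{q}}$ being finite. Computing that norm through the formula \eqref{normLq-1}, the large-$r$ integral carries the weight $e^{(n-1)r}$, which is tamed by the factor $e^{-(n-1)r/2}$ of the kernel precisely when $q>2$; this is exactly where the hypothesis $q>2$ enters, and it is the point at which the hyperbolic geometry produces a stronger decay than in the Euclidean setting.
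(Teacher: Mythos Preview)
Your approach is essentially identical to the paper's: interpolate $L^1\to L^\infty$ against $L^2\to L^2$ for $|t|\le 1$, and invoke Kunze--Stein for $|t|>1$. There is one imprecision worth flagging. The form of Kunze--Stein you state, $\|f*\kappa\|_{L^q}\le C\|f\|_{L^{q'}}\|\kappa\|_{L^q}$, is not the standard sharp version; the correct inequality (the one the paper uses, following Cowling) requires the Lorentz norm $\|\kappa\|_{L^{q,1}}$ on the right rather than $\|\kappa\|_{L^q}$. Since $L^{q,1}\subset L^q$, your version is strictly stronger and does not follow from the usual statement. This does not damage the argument, however: the pointwise bound $|I(t,r)|\lesssim |t|^{-3/2}r^{(n+3)/4}e^{-(n-1)r/2}$ from Lemma~\ref{KerEst} places the kernel in $L^{q,1}(\mathbb{H}^n)$ (via \eqref{normLpr} with $d=1$) with the same $|t|^{-3/2}$ bound, so one simply replaces your appeal to Lemma~\ref{KerEst1} by the direct $L^{q,1}$ computation. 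The paper does exactly this, writing out the $L^{q,1}$ norm of the kernel explicitly rather than passing through Lemma~\ref{KerEst1}.
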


\begin{proof} By computing the $L^{\infty}(\mathbb{H}^{n})$-norm directly in the integral expression of $e^{-itP}$, we obtain for small $|t|$ that
\[
\left\Vert e^{-itP}\right\Vert _{L^{1}(\mathbb{H}^{n})\rightarrow L^{\infty
}(\mathbb{H}^{n})}\leq\left\Vert I(t,r)\right\Vert _{L^{\infty}(\mathbb{H}%
^{n})}\leq C|t|^{-n/2}.\]
Also, we have that
\[
\left\Vert e^{-itP}\right\Vert _{L^{2}(\mathbb{H}^{n})\rightarrow
L^{2}(\mathbb{H}^{n})}\leq1.
\]
Interpolating between the two previous inequalities yields the result for $|t|\leq1$.

For large $|t|$, we need the following Kunze-Stein inequality (see \cite{Cow1997,Cow1998}):
\begin{align*}
\left\Vert f\ast g\right\Vert _{L^{q}(\mathbb{H}^{n})}&\leq C\left\Vert f\right\Vert_{L^{q,1}(\mathbb{H}^n)}\left\Vert g\right\Vert _{L^{q^{\prime}}(\mathbb{H}^{n})}\\
&\leq  C\left(  \int
_{0}^{1}f(r)r^{\frac{n}{q}-1}dr+\int_{1}^{\infty}f(r)e^{\frac{(n-1)}{q}%
r}dr\right)  \left\Vert g\right\Vert _{L^{q^{\prime}}(\mathbb{H}^{n})},
\end{align*}
for $q>2$, where above we have used the equivalence (\ref{normLpr}). Taking $f=\left\vert I(t,r)\right\vert $ and using (\ref{est-aux-St-1}), this inequality leads us to
\begin{align*}
\left\Vert e^{-itP}\right\Vert _{L^{q^{\prime}}(\mathbb{H}^{n})\rightarrow
L^{q}(\mathbb{H}^{n})}  & \leq C\left(  \int_{0}^{1}\left\vert I(t,r) \right\vert r^{\frac{n}{q}-1}dr+\int_{1}^{\infty}\left\vert I(t,r)\right\vert e^{\frac{(n-1)}{q}r}dr\right)  \\
& \leq C|t|^{-3/2}\left(  \int_{0}^{1}r^{\frac{n}{q}+\frac{n-1}{4}}%
e^{-\frac{(n-1)}{2}r}dr+\int_{1}^{\infty}r^{\frac{n+3}{4}}e^{-(n-1)(\frac
{1}{2}-\frac{1}{q})r}dr\right)  \\
& \leq C|t|^{-3/2},\,
\end{align*}
for $q>2,$ as desired.
\end{proof}

\begin{remark}\label{Strichatz}
As previous works (see for example \cite{Anker2009,Ionescu2009}) we can also obtain a more general version of dispersive estimates provided in Theorem \ref{Dispersive} by using Lemma \ref{KerEst1}. In particular, for $2<q,\tilde{q}\leq \infty$, there exists a constant $C>0$ such that
\begin{equation}
\left\Vert e^{-itP}\right\Vert _{L^{\tilde{q}^{\prime}}(\mathbb{H}^{n})\rightarrow
L^{q}(\mathbb{H}^{n})}\leq C%
\begin{cases}
|t|^{-\max\left\{ \frac{1}{2}-\frac{1}{q},\,\frac{1}{2}-\frac{1}{\tilde{q}}\right\}n  } & \hbox{   if   }|t|\leq1,\\
|t|^{-\frac{3}{2}} & \hbox{   if   }|t|>1.
\end{cases}%
\end{equation}
Using these dispersive estimates one can prove the Strichartz estimates for the group $e^{-itP}$, and then show the well-posedness and scattering in the energy space $H^1(\mathbb{H}^n)\times L^2(\mathbb{H}^n)$ for GBq equation. This process has been done for the GBq equation on Euclidean space $\mathbb{R}^n\, (n \geq 3)$ (see a recent work \cite{Che2023} for more details) and also for nonlinear Schr\"odinger equations on $\mathbb{H}^n\, (n \geq 2)$ in \cite{Anker2009,Ionescu2009}. However, in the rest of this paper we are going to use Theorem \ref{Dispersive} to establish the well-posedness and scattering in Lorentz spaces of $\mathbb{H}^n$ that enable us to expand the initial data class to settings beyond the energy space (see Remark \ref{obser1} below). Moreover, in comparison to the Euclidean case, we obtain a different decay in the dispersive estimates for the Boussinesq group in $\mathbb{H}^{n}$ as well as in the scattering and asymptotic stability properties.
\end{remark}

We now extend the dispersive estimates presented in Theorem \ref{Dispersive} to the context of Lorentz spaces.

\begin{lemma}
\label{DisperInter} Let $2<p<\infty$ and $1\leq d\leq\infty$. Then, there
exists a constant $C>0$ (independent of $t$) such that
\begin{equation}
\left\Vert e^{-itP}g\right\Vert _{{(p,d)}}\leq C\phi_{p}(t)\left\Vert
g\right\Vert _{{(p^{\prime},d)}},\label{group2}%
\end{equation}
for all $g\in L^{(p^{\prime},d)}(\mathbb{H}^{n}),$ where
\begin{equation}
\phi_{p}(t)=%
\begin{cases}
|t|^{-\frac{n}{2}\left(  1-\frac{2}{p}\right)  } & \hbox{   if   }|t|\leq1,\\
|t|^{-\frac{3}{2}} & \hbox{   if   }|t|>1.
\end{cases}
\end{equation}

\end{lemma}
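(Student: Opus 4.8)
The plan is to obtain the Lorentz bound \eqref{group2} by real interpolation of the two Lebesgue endpoint estimates supplied by Theorem \ref{Dispersive}. Since $2<p<\infty$, I would first fix auxiliary exponents $2<p_{1}<p<p_{2}<\infty$ together with $\theta\in(0,1)$ determined by $\frac{1}{p}=\frac{1-\theta}{p_{1}}+\frac{\theta}{p_{2}}$. Because each $p_{i}>2$, Theorem \ref{Dispersive} applies and gives the endpoint bounds $\left\Vert e^{-itP}\right\Vert_{L^{p_{i}'}(\mathbb{H}^{n})\to L^{p_{i}}(\mathbb{H}^{n})}\leq C\,\phi_{p_{i}}(t)$ for $i=1,2$, where $\phi_{p_{i}}(t)=|t|^{-\frac{n}{2}(1-2/p_{i})}$ for $|t|\leq1$ and $\phi_{p_{i}}(t)=|t|^{-3/2}$ for $|t|>1$.

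The key structural observation is that one and the same $\theta$ organizes both the domain and the target spaces. On the target side, the definition of the Lorentz spaces via the functor $(\cdot,\cdot)_{\theta,d}$ recalled before \eqref{interp1} gives $\left(L^{p_{1}}(\mathbb{H}^{n}),L^{p_{2}}(\mathbb{H}^{n})\right)_{\theta,d}=L^{(p,d)}(\mathbb{H}^{n})$. On the domain side, using $\frac{1}{p_{i}'}=1-\frac{1}{p_{i}}$ one checks that $\frac{1-\theta}{p_{1}'}+\frac{\theta}{p_{2}'}=1-\frac{1}{p}=\frac{1}{p'}$, so the \emph{same} $\theta$ yields $\left(L^{p_{1}'}(\mathbb{H}^{n}),L^{p_{2}'}(\mathbb{H}^{n})\right)_{\theta,d}=L^{(p',d)}(\mathbb{H}^{n})$. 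Applying the interpolation property of the real method (cf. \cite[Theorem~3.1.2]{BeLo}) to the single linear operator $e^{-itP}$, which is an exact interpolation functor of exponent $\theta$, then produces
$$\left\Vert e^{-itP}\right\Vert_{L^{(p',d)}(\mathbb{H}^{n})\to L^{(p,d)}(\mathbb{H}^{n})}\leq C\,\phi_{p_{1}}(t)^{1-\theta}\,\phi_{p_{2}}(t)^{\theta},$$
with $C$ independent of $t$.

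It then remains to verify that the product of the time factors reproduces exactly $\phi_{p}(t)$. For $|t|>1$ both factors equal $|t|^{-3/2}$, so their $(1-\theta,\theta)$-weighted product is again $|t|^{-3/2}$. For $|t|\leq1$ the resulting exponent is $\frac{n}{2}\bigl[(1-\theta)(1-\tfrac{2}{p_{1}})+\theta(1-\tfrac{2}{p_{2}})\bigr]=\frac{n}{2}(1-\tfrac{2}{p})$, where the last equality uses once more the defining relation $\frac{1}{p}=\frac{1-\theta}{p_{1}}+\frac{\theta}{p_{2}}$; this gives the small-time factor $|t|^{-\frac{n}{2}(1-2/p)}$, precisely as required.

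I do not expect a genuine obstacle here: the argument is a direct real-interpolation computation that reuses Theorem \ref{Dispersive} verbatim. The only point demanding care is the bookkeeping that the single parameter $\theta$ simultaneously recovers $L^{(p',d)}$ on the domain and $L^{(p,d)}$ on the target, and that the two endpoint time-decay rates interpolate back to $\phi_{p}(t)$ separately on each regime $|t|\leq1$ and $|t|>1$. (Alternatively, one could avoid interpolating the target space and instead run the Kunze–Stein estimate directly in the Lorentz scale, but the interpolation route is cleaner and entirely elementary.)
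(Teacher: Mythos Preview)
Your proposal is correct and follows essentially the same route as the paper: fix two Lebesgue endpoints $p_{1},p_{2}\in(2,\infty)$, apply Theorem~\ref{Dispersive} at each, interpolate via the real functor $(\cdot,\cdot)_{\theta,d}$ to land in $L^{(p,d)}$ and $L^{(p',d)}$, and then check that $\phi_{p_{1}}(t)^{1-\theta}\phi_{p_{2}}(t)^{\theta}=\phi_{p}(t)$ in each time regime. Your write-up is slightly more explicit than the paper's in verifying that the same $\theta$ simultaneously interpolates the domain and target scales, but the argument is otherwise identical.
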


\textbf{Proof.} Let $p,p_{1},p_{2}\in(2,\infty)$ and $\theta\in(0,1)$ satisfy
$p^{-1}=\theta p_{1}^{-1}+(1-\theta)p_{2}^{-1}.$ Recalling that $L^{p}%
=L^{(p,p)}$, Theorem \ref{Dispersive} gives%
\begin{equation}
\left\Vert e^{-itP}g\right\Vert _{(p_{i},p_{i})}\leq C_{i}\phi_{p_{i}%
}(t)\left\Vert g\right\Vert _{(p_{i}^{\prime},p_{i}^{\prime})}%
,\label{aux-est-interp-1}%
\end{equation}
where $C_{i}>0$ is independent of $t.$ Since the functor $\left(  \cdot
,\cdot\right)  _{\theta,d}$ is of exponent $\theta$ and $L^{(p,d)}=\left(
L^{p_{1}},L^{p_{2}}\right)  _{\theta,d}$, we can interpolate estimate
(\ref{aux-est-interp-1}) to obtain
\begin{equation}
\left\Vert e^{-itP}g\right\Vert _{(p,d)}\leq K(C_{1}\phi_{p_{1}}(t))^{\theta
}(C_{2}\phi_{p_{2}}(t))^{1-\theta}\left\Vert g\right\Vert _{(p^{\prime}%
,d)},\label{aux-est-interp-2}%
\end{equation}
with $K>0$ independent of $t$. Taking $C=K\left(  C_{1}\right)  ^{\theta
}(C_{2})^{1-\theta}$ and recalling that $p^{-1}=\theta p_{1}^{-1}%
+(1-\theta)p_{2}^{-1},$ we obtain
\begin{equation}
K(C_{1}\phi_{p_{1}}(t))^{\theta}(C_{2}\phi_{p_{2}}(t))^{1-\theta}=C\phi
_{p}(t),\label{aux-est-interp-3}%
\end{equation}
and then inserting (\ref{aux-est-interp-3}) into (\ref{aux-est-interp-2})
yields the required estimate. \fin

\section{Local and global well-posedness in Lorentz spaces}\label{S4}

For $s\in\mathbb{R}$, let $\Lambda^{s}=(-\Delta_{x})^{s/2}$ and $J^{s}%
=(1-\Delta_{x})^{s/2}$ defined in $\mathbb{H}^n$, where $-\Delta_{x}$ denotes the minus Laplace-Beltrami
operator in $\mathbb{H}^{n}$. Now consider the Banach space
\[
X_{(p,d)}=L^{(p,d)}(\mathbb{H}^n)\times\Lambda^{-1}H_{(p,d)}^{-1}(\mathbb{H}^n)%
\]
equipped with the norm
\[
\left\Vert \lbrack\varphi,\psi]\right\Vert _{X_{(p,d)}}=\max\{\Vert
\varphi\Vert_{(p,d)},\Vert\psi\Vert_{\Lambda H_{(p,d)}^{-1}}\},
\]
where $H_{(p,d)}^{-1}(\mathbb{H}^n)=J(L^{(p,d)}(\mathbb{H}^n))$ and $(L^{(p,d)}(\mathbb{H}^n),\Vert\cdot\Vert_{(p,d)})$
stands for a Lorentz space (see the definition in Section \ref{S32}).

For $0<T<\infty$, let $\mathcal{L}_{\beta}^{T}$ and $\mathcal{L}_{\alpha
_{1},\alpha_{2}}$ stand for the respective Banach spaces of all Bochner
measurable pairs $[u,v]:(-T,T)\rightarrow X_{(b+1,\infty)}$ and
$[u,v]:(-\infty,\infty)\rightarrow X_{(b+1,\infty)}$ endowed with norms
\begin{equation}
\Vert\lbrack u,v]\Vert_{\mathcal{L}_{\beta}^{T}}=\sup_{0<\left\vert
t\right\vert <T}\left\vert t\right\vert ^{\beta}\left\Vert [u(t,\cdot
),v(t,\cdot)]\right\Vert _{X_{(b+1,\infty)}}\label{n1}%
\end{equation}
and
\begin{equation}
\Vert\lbrack u,v]\Vert_{\mathcal{L}_{\alpha_{1},\alpha_{2}}}=\sup
_{0<\left\vert t\right\vert <1}\left\vert t\right\vert ^{\alpha_{1}}\left\Vert
[u(t,\cdot),v(t,\cdot)]\right\Vert _{X_{(b+1,\infty)}}+\sup_{\left\vert
t\right\vert >1}\left\vert t\right\vert ^{\alpha_{2}}\left\Vert [u(t,\cdot
),v(t,\cdot)]\right\Vert _{X_{(b+1,\infty)}},\label{n2}%
\end{equation}
where $\beta=\frac{n(b-1)}{2(b+1)}$ and the exponents $\alpha_{i}$'s satisfy $\alpha_{1}=\frac{1}%
{b-1}-\frac{n}{2(b+1)}$ and $0\leq\alpha_{2}\leq\frac{3}{2}$. Moreover, define
the initial data space $\mathcal{E}_{0}$ as the set of all pairs $[u_{0}%
,v_{0}]\in\mathcal{S}^{\prime}(\mathbb{H}^{n})\times\mathcal{S}^{\prime
}(\mathbb{H}^{n})$ such that
\begin{equation}
\Vert\lbrack u_{0},v_{0}]\Vert_{\mathcal{E}_{0}}=\sup_{0<\left\vert
t\right\vert <1}\left\vert t\right\vert ^{\alpha_{1}}\left\Vert G(t)[u_{0}%
,v_{0}]\right\Vert _{X_{(b+1,\infty)}}+\sup_{\left\vert t\right\vert
>1}\left\vert t\right\vert ^{\alpha_{2}}\left\Vert G(t)[u_{0},v_{0}%
]\right\Vert _{X_{(b+1,\infty)}}<\infty.\label{n3}%
\end{equation}

Before proceeding, we define two values associated with the power $b$ of
the nonlinearity in (\ref{Bou1}) that will be useful in the statements of our
results. The first one is the positive root of the equation $nb^{2}%
-(n+2)b-2=0,$ which is given by
\begin{equation*}
b_{0}(n)=\frac{n+2+\sqrt{n^{2}+12n+4}}{2n}>1,
\end{equation*}
for $n\in\mathbb{N}$ with $n\geq2$. The second is $b_{1}(n)$ defined by $b_{1}(n)=\infty$ if $n=2$ and $b_{2}(n)=$ $\frac{n+2}{n-2}$ if $n\geq3$,
for $n\in\mathbb{N}.$

Our local-in-time well-posedness result reads as follows.

\begin{theorem}
[Local-in-time well-posedness]\label{LWP} Let $1<b<b_{0}(n)$.

\begin{enumerate}
\item[(i)] (Well-posedness) Assume that $[u_{0},v_{0}]\in L^{(\frac{b+1}%
{b},\infty)}(\mathbb{H}^n)\times L^{(\frac{b+1}{b},\infty)}(\mathbb{H}^n)$.  Then, there exsts an
existence time $T\in(0,\infty)${ such that the IVP (\ref{Bou1}) has a mild
solution $[u,v]\in$}$\mathcal{L}_{\beta}^{T}${ which is unique in a given ball
of }$\mathcal{L}_{\beta}^{T}.$ Moreover, $[u(t),v(t)]\rightharpoonup\lbrack
u_{0},v_{0}]$ in $\mathcal{S}^{\prime}(\mathbb{H}^{n})$ as $t\rightarrow0^{+}$
and the data-solution map {$[u_{0},v_{0}]\rightarrow{[u,v]}$ $\ $from
$L^{(\frac{b+1}{b},\infty)}\times$}$L^{(\frac{b+1}{b},\infty)}${\ to
}$\mathcal{L}_{\beta}^{T}$ is locally Lipschitz{. }

\item[(ii)] ($L^{(b+1,d)}$-Regularity) Let {$[u_{0},v_{0}]\in L^{(\frac
{b+1}{b},d)}(\mathbb{H}^n)\times$}$L^{(\frac{b+1}{b},d)}(\mathbb{H}^n)$ with $1\leq d\leq\infty.$ Then,
the previous solution satisfies
\begin{equation}
\sup_{0<\left\vert t\right\vert <T}\left\vert t\right\vert ^{\beta}\Vert
u(t,\cdot)\Vert_{(b+1,d)}<\infty\text{ and }\sup_{0<\left\vert t\right\vert
<T}\left\vert t\right\vert ^{\beta}\Vert v(t,\cdot)\Vert_{\Lambda
^{-1}H_{(b+1,d)}^{-1}}<\infty.\label{reg1}%
\end{equation}

\end{enumerate}
\end{theorem}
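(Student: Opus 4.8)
The plan is to recast the integral equation \eqref{intergralEq} as a fixed-point problem for the map
\[
\Phi[u,v](t)=G(t)[u_{0},v_{0}]-\int_{0}^{t}G(t-s)[0,f(u(s))]\,ds
\]
on a closed ball of the time-weighted space $\mathcal{L}_{\beta}^{T}$, and to solve it by the contraction principle for $T$ small. The first step is a reduction of the Boussinesq group to the prototype group: by \eqref{BouOP}, \eqref{aux-g1} and \eqref{aux-g2}, each entry $g_{i}(t)$ is the composition of one of the order-zero spectral multipliers $\mathrm{Id}$, $\Lambda J^{-1}$, $J\Lambda^{-1}$ with $e^{\pm itP}$. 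Since $|\xi|=\sqrt{\lambda^{2}+\rho^{2}}\geq\rho>0$ on $\mathbb{H}^{n}$, the symbols $|\xi|\langle\xi\rangle^{-1}$ and $\langle\xi\rangle|\xi|^{-1}$ are smooth and bounded above and below, so these multipliers are bounded on $L^{(p,d)}(\mathbb{H}^{n})$ by spectral-multiplier theory (and interpolation in $d$). The structure of $X_{(b+1,\infty)}=L^{(b+1,\infty)}\times\Lambda^{-1}H^{-1}_{(b+1,\infty)}$ is tailored so these factors cancel: e.g. in the second slot one computes $J^{-1}\Lambda\,g_{3}=l_{2}$ and $J^{-1}\Lambda\,g_{4}=J^{-1}\Lambda\,l_{1}$, reducing everything to $e^{\pm itP}$ composed with bounded operators. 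Hence Lemma \ref{DisperInter} with $p=b+1$ transfers to the bound $\Vert G(t)[u_{0},v_{0}]\Vert_{X_{(b+1,\infty)}}\lesssim\phi_{b+1}(t)\,\Vert[u_{0},v_{0}]\Vert_{L^{(\frac{b+1}{b},\infty)}\times L^{(\frac{b+1}{b},\infty)}}$, and likewise $\Vert G(t-s)[0,g]\Vert_{X_{(b+1,\infty)}}\lesssim\phi_{b+1}(t-s)\Vert g\Vert_{(\frac{b+1}{b},\infty)}$.

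For $0<|t|<T\leq1$ we have $\phi_{b+1}(t)=|t|^{-\frac{n}{2}(1-\frac{2}{b+1})}=|t|^{-\beta}$, so the linear term already lies in $\mathcal{L}_{\beta}^{T}$ with $\Vert G(\cdot)[u_{0},v_{0}]\Vert_{\mathcal{L}_{\beta}^{T}}\lesssim\Vert[u_{0},v_{0}]\Vert$. For the Duhamel term, \eqref{Bou-Cond-f} with $a_{2}=0$ gives $|f(u)|\leq C|u|^{b}$, and the Lorentz power/scaling identity (or iterating \eqref{Holder}) yields $\Vert f(u(s))\Vert_{(\frac{b+1}{b},\infty)}\lesssim\Vert u(s)\Vert_{(b+1,\infty)}^{b}\leq|s|^{-b\beta}R^{b}$, where $R=\Vert[u,v]\Vert_{\mathcal{L}_{\beta}^{T}}$. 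Combining,
\[
\Vert N[u](t)\Vert_{X_{(b+1,\infty)}}\lesssim R^{b}\int_{0}^{|t|}|t-s|^{-\beta}|s|^{-b\beta}\,ds\simeq R^{b}|t|^{\,1-\beta-b\beta},
\]
the last step being the Beta-function evaluation, valid provided $\beta<1$ and $b\beta<1$. Thus $|t|^{\beta}\Vert N[u](t)\Vert_{X}\lesssim R^{b}T^{\,1-b\beta}$, so the crucial requirement is $b\beta<1$. Computing $\beta=\frac{n(b-1)}{2(b+1)}$, this reads $nb(b-1)<2(b+1)$, i.e. $nb^{2}-(n+2)b-2<0$, which for $b>1$ holds exactly when $b<b_{0}(n)$; note $b\beta<1$ also forces $\beta<1$. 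Hence the exponent $1-b\beta$ is positive and $T^{1-b\beta}\to0$ as $T\to0$.

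The contraction step is analogous: the Lipschitz form of \eqref{Bou-Cond-f} together with \eqref{Holder} gives $\Vert f(u_{1}(s))-f(u_{2}(s))\Vert_{(\frac{b+1}{b},\infty)}\lesssim(\Vert u_{1}(s)\Vert_{(b+1,\infty)}^{b-1}+\Vert u_{2}(s)\Vert_{(b+1,\infty)}^{b-1})\Vert u_{1}(s)-u_{2}(s)\Vert_{(b+1,\infty)}$, and the same time integral produces $\Vert\Phi[u_{1},v_{1}]-\Phi[u_{2},v_{2}]\Vert_{\mathcal{L}_{\beta}^{T}}\lesssim T^{1-b\beta}R^{b-1}\Vert[u_{1},v_{1}]-[u_{2},v_{2}]\Vert_{\mathcal{L}_{\beta}^{T}}$. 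Choosing the ball radius $R\simeq\Vert[u_{0},v_{0}]\Vert$ and then $T$ small so that $CT^{1-b\beta}R^{b-1}<\tfrac12$ makes $\Phi$ a self-map and a contraction on $\overline{B}_{R}\subset\mathcal{L}_{\beta}^{T}$, and Banach's theorem gives the unique mild solution; local Lipschitz dependence on the data is read off from these same estimates. The weak convergence $[u(t),v(t)]\rightharpoonup[u_{0},v_{0}]$ in $\mathcal{S}'(\mathbb{H}^{n})$ as $t\to0^{+}$ follows because $G(t)[u_{0},v_{0}]\rightharpoonup[u_{0},v_{0}]$ (strong continuity of $e^{\pm itP}$ tested against $\mathcal{S}$) while $\Vert N[u](t)\Vert_{X}\lesssim|t|^{1-\beta-b\beta}R^{b}\to0$. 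For part (ii), since Lemma \ref{DisperInter} and the Lorentz H\"{o}lder/scaling estimates hold for all $1\leq d\leq\infty$, one reruns the Duhamel bound measuring $u$ in the $(b+1,d)$-norm while controlling the nonlinearity through the already-known $(b+1,\infty)$-weighted norm of the solution from (i); this persistence/bootstrap argument closes \eqref{reg1} without a fresh fixed point.

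The main obstacle is the time-weighted Duhamel estimate: one must verify that the convolution $\int_{0}^{|t|}|t-s|^{-\beta}|s|^{-b\beta}\,ds$ both converges and reproduces exactly the weight $|t|^{-\beta}$ with a gain $T^{1-b\beta}$, and then identify the sharp threshold $b_{0}(n)$ as the solvability condition $b\beta<1$. A subordinate technical point, essential for descending from $e^{-itP}$ to $G(t)$, is the $L^{(p,d)}(\mathbb{H}^{n})$-boundedness of the order-zero multipliers $\Lambda J^{-1}$ and $J^{-1}\Lambda$, which hinges on spectral-multiplier theory and on the strict positivity $|\xi|\geq\rho>0$ peculiar to the hyperbolic setting.
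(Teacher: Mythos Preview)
Your proof is correct and follows essentially the same route as the paper: you re-derive inline what the paper packages as Lemma \ref{estim1} (the $X_{(p,d)}$-estimate for $G(t)$ via the $L^{(p,d)}$-boundedness of $\Lambda J^{-1}$ and the dispersive bound for $e^{\pm itP}$) and Proposition \ref{lem8}(i) (the time-weighted Duhamel estimate with the Beta-function computation), correctly identify the threshold $b\beta<1\Leftrightarrow b<b_{0}(n)$, and close by contraction for small $T$. For part (ii) the paper makes your ``persistence/bootstrap'' precise by running the $(b+1,d)$-estimate along the Picard sequence and passing to the limit, which is the standard way to avoid the a priori finiteness issue implicit in a direct bootstrap; otherwise the arguments coincide.
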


\begin{remark}
\label{obser1}

\begin{itemize}
\item[(i)] (Infinite energy data) Using the initial class {$L^{(\frac{b+1}%
{b},\infty)}(\mathbb{H}^n)\times$}$L^{(\frac{b+1}{b},\infty)}(\mathbb{H}^n)$ given in Theorem \ref{LWP},
we are able to cover initial data \ $[u_{0},v_{0}]$ such that $u_{0}%
,v_{0}\notin L^{2}(\mathbb{H}^{n})$. For that, let $\{x^{(j)}\}_{j\in
\mathbb{N}}$ and $\{\tilde{x}^{(j)}\}_{j\in\mathbb{N}}$ be two sets of fixed
points in $\mathbb{H}^{n}$ and consider
\begin{equation}
u_{0}=\Sigma_{j=1}^{\infty}\kappa_{j}R_{k,j}(d(x,x^{(j)}))\Gamma_{j}(x)\text{
and }v_{0}=\Sigma_{j=1}^{\infty}\mu_{j}S_{k,j}(d(x,\tilde{x}^{(j)}%
)\tilde{\Gamma}_{j}(x)\label{data}%
\end{equation}
where
\begin{equation}
\Gamma_{j}(x)=\left\{
\begin{array}
[c]{c}%
d(x,x^{(j)})^{-\frac{nb}{b+1}-k}\text{, if }0<d(x,x^{(j)})<1,\\
e^{-(\frac{(n-1)b}{b+1}+s)d(x,x^{(j)})}\,\text{, if }d(x,x^{(j)})\geq1,
\end{array}
\right.  \text{ and }\tilde{\Gamma}_{j}(x)=\left\{
\begin{array}
[c]{c}%
d(x,\tilde{x}^{(j)})^{-\frac{nb}{b+1}-\tilde{k}}\text{, if }0<d(x,\tilde
{x}^{(j)})<1,\\
e^{-(\frac{(n-1)b}{b+1}+\tilde{s})d(x,\tilde{x}^{(j)})}\,\text{, if
}d(x,\tilde{x}^{(j)})\geq1,
\end{array}
\right.  \label{data-2}%
\end{equation}
where $\kappa_{j},\mu_{j}$ are constant and $R_{k,j}(z),S_{k,j}(z):\mathbb{R}%
_{+}\rightarrow\mathbb{R}$ are continuous functions satisfying
\begin{align*}
\left\vert R_{k,j}(z)\right\vert  &  \lesssim\left\vert z\right\vert
^{k}\text{ and }\left\vert S_{k,j}(z)\right\vert \lesssim\left\vert
z\right\vert ^{\tilde{k}},\text{ as }z\rightarrow0,\text{ }\\
\text{ \ \ \ \ }\left\vert R_{k,j}(z)\right\vert  &  \lesssim e^{sz}\text{ and
\ \ }\left\vert S_{k,j}(z)\right\vert \lesssim e^{\tilde{s}z},\text{ as
}z\rightarrow\infty,
\end{align*}
where $k,\tilde{k}$ and $s,\tilde{s}$ are nonnegative real numbers. In fact,
the data (\ref{data}) have not even finite local-energy, that is,\ $u_{0}%
\notin L_{loc}^{2}(\mathbb{H}^{n})$ and $v_{0}\notin L_{loc}^{2}%
(\mathbb{H}^{n})$. Furthermore, we have that $\phi=\Delta(v_{0})\notin
H^{-2}(\mathbb{H}^{n}).$\bigskip

\item[(ii)] With a slight adaptation of its statement and proof, Theorem
\ref{LWP} still works well by replacing the initial data class {$L^{(\frac
{b+1}{b},\infty)}(\mathbb{H}^n)\times$}$L^{(\frac{b+1}{b},\infty)}(\mathbb{H}^n)$ {\ by }the space of all
pairs $[u_{0},v_{0}]\in\mathcal{S}^{\prime}(\mathbb{H}^{n})\times
\mathcal{S}^{\prime}(\mathbb{H}^{n})$ such that%
\begin{equation}
\sup_{0<\left\vert t\right\vert <T}\left\vert t\right\vert ^{\beta}\Vert
G(t)[u_{0},v_{0}]\Vert_{X_{(b+1,\infty)}}<\infty\text{ \ (see (\ref{aux21})).}\label{aux22}%
\end{equation}

\end{itemize}
\end{remark}

In what follows, we state our result on global well-posedness.

\begin{theorem}
[Global well-posedness]\label{GWP}Let $b_{0}(n)<b<b_{1}(n)$,{ }$\alpha
_{1}=\frac{1}{b-1}-\frac{n}{2(b+1)},$ $0\leq\alpha_{2}\leq\frac{3}{2}$ and
suppose that $[u_{0},v_{0}]\in\mathcal{E}_{0}.$

\begin{enumerate}
\item[(i)] (Well-posedness) There exists $\varepsilon>0$ such that {the IVP
(\ref{Bou1}) has a unique global-in-time mild solution $[u,v]\in$}%
$\mathcal{L}_{\alpha_{1},\alpha_{2}}${ satisfying $\Vert\lbrack u,v]\Vert
_{\mathcal{L}_{\alpha_{1},\alpha_{2}}}\leq2\varepsilon$,} {provided that
}$\Vert\lbrack u_{0},v_{0}]\Vert_{\mathcal{E}_{0}}\leq\varepsilon$. Moreover,
$[u(t),v(t)]\rightharpoonup\lbrack u_{0},v_{0}]$ in $\mathcal{S}^{\prime
}(\mathbb{H}^{n})$ as $t\rightarrow0^{+}$ and the data-solution map is locally Lipschitz.

\item[(ii)] ($L^{(b+1,d)}$-regularity) {Let }$1\leq d\leq\infty,$ {$0\leq
h<1-{\alpha}_{1}{b,}$ }$h\leq\frac{3}{2}-\alpha_{2},$ and {assume further that
}%
\begin{equation}
\sup_{\left\vert t\right\vert <1}\left\vert t\right\vert ^{\alpha_{1}+h}\Vert
G(t)[u_{0},v_{0}]\Vert_{X_{(b+1,d)}}+\sup_{\left\vert t\right\vert
>1}\left\vert t\right\vert ^{\alpha_{2}+h}\Vert G(t)[u_{0},v_{0}%
]\Vert_{X_{(b+1,d)}}<\infty.\label{conreg2}%
\end{equation}
Then the solution $[u,v]$ obtained in item (i) satisfies
\begin{align}
\sup_{\left\vert t\right\vert <1}\left\vert t\right\vert ^{\alpha_{1}+h}\Vert
u(t,\cdot)\Vert_{(b+1,d)} &  <\infty,\text{ }\sup_{\left\vert t\right\vert
>1}\left\vert t\right\vert ^{\alpha_{2}+h}\Vert u(t,\cdot)\Vert_{(b+1,d)}%
<\infty,\label{reg-2-1}\\
\sup_{\left\vert t\right\vert <1}\left\vert t\right\vert ^{\alpha_{1}+h}\Vert
v(t,\cdot)\Vert_{\Lambda^{-1}H_{(b+1,d)}^{-1}} &  <\infty,\text{ and }%
\sup_{\left\vert t\right\vert >1}\left\vert t\right\vert ^{\alpha_{2}+h}\Vert
v(t,\cdot)\Vert_{\Lambda^{-1}H_{(b+1,d)}^{-1}}<\infty,\label{reg-2-2}%
\end{align}
provided that $\Vert\lbrack u_{0},v_{0}]\Vert_{\mathcal{E}_{0}}\leq
\varepsilon_{0}$, for some $0<\varepsilon_{0}\leq\varepsilon$.
\end{enumerate}
\end{theorem}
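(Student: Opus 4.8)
The plan is to construct the global solution as a fixed point of the Duhamel map
\[
\mathcal{B}[u,v](t)=G(t)[u_0,v_0]-\int_0^t G(t-s)\,[0,f(u(s))]\,ds
\]
from \eqref{intergralEq} on a small ball of the two-scale time-weighted space $\mathcal{L}_{\alpha_1,\alpha_2}$. By the very definition \eqref{n3} of $\mathcal{E}_0$, the linear term $G(t)[u_0,v_0]$ is already bounded in the $\mathcal{L}_{\alpha_1,\alpha_2}$-norm by $\|[u_0,v_0]\|_{\mathcal{E}_0}\le\varepsilon$, so the whole problem reduces to estimating the nonlinear integral term and showing $\mathcal{B}$ is a contraction.

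First I would transfer the Lorentz dispersive bound \eqref{group2} for $e^{-itP}$ to a mapping estimate for the Boussinesq propagator applied to a source $[0,\psi]$. By \eqref{BouOP} the two components of $G(t)[0,\psi]$ are $g_2(t)\psi$ and $g_4(t)\psi$; using the identities \eqref{aux-g1}--\eqref{aux-g2}, the relation $|\xi|\langle\xi\rangle^{-1}=\Lambda J^{-1}$, and the boundedness of the multiplier $\Lambda J^{-1}$ on Lorentz spaces, both components reduce to $e^{\pm itP}$ composed with bounded multipliers. Since the second slot of $X_{(b+1,\infty)}$ carries the norm $\|\Lambda J^{-1}(\cdot)\|_{(b+1,\infty)}$, Lemma \ref{DisperInter} yields
\[
\big\|G(t)[0,\psi]\big\|_{X_{(b+1,\infty)}}\lesssim \phi_{b+1}(t)\,\|\psi\|_{(\frac{b+1}{b},\infty)}.
\]
Combining this with the growth condition \eqref{Bou-Cond-f} and the Hölder inequality \eqref{Holder} in Lorentz spaces (applied with exponents $\tfrac{b+1}{b-1}$ and $b+1$) gives the nonlinear bound
\[
\|f(u)-f(\tilde u)\|_{(\frac{b+1}{b},\infty)}\lesssim\big(\|u\|_{(b+1,\infty)}^{b-1}+\|\tilde u\|_{(b+1,\infty)}^{b-1}\big)\|u-\tilde u\|_{(b+1,\infty)},
\]
in which the dual exponent $p'=\tfrac{b+1}{b}$ of the solution exponent $p=b+1$ matches exactly the input required by the group estimate.

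The core of the argument is the time-integral estimate. Inserting the two displays above, bounding the nonlinear term in $\mathcal{L}_{\alpha_1,\alpha_2}$ amounts to controlling, for $R(t):=\int_0^{|t|}\phi_{b+1}(t-s)\,\|u(s)\|_{(b+1,\infty)}^{b}\,ds$, the quantities $|t|^{\alpha_1}R(t)$ for $|t|<1$ and $|t|^{\alpha_2}R(t)$ for $|t|>1$ by a constant times $\|[u,v]\|_{\mathcal{L}_{\alpha_1,\alpha_2}}^b$. For short times $\phi_{b+1}(t-s)=|t-s|^{-\beta}$ with $\beta=\tfrac{n(b-1)}{2(b+1)}$, and the Beta-type integral $\int_0^{|t|}|t-s|^{-\beta}s^{-\alpha_1 b}\,ds\simeq|t|^{1-\beta-\alpha_1 b}$ closes precisely by the scaling identity $\alpha_1(b-1)=1-\beta$, where convergence requires $\beta<1$ (equivalent to $b<b_1(n)$) and $\alpha_1 b<1$ (equivalent to $b>b_0(n)$, as $b_0(n)$ is the positive root of $nb^2-(n+2)b-2=0$). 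For long times I would split $\int_0^{|t|}=\int_0^1+\int_1^{|t|}$ and further separate according to $|t-s|<1$ versus $|t-s|>1$, pitting the decay $\phi_{b+1}(t-s)=|t-s|^{-3/2}$ against the profile $\|u(s)\|_{(b+1,\infty)}\lesssim|s|^{-\alpha_2}$; here the constraint $0\le\alpha_2\le\tfrac32$ is exactly what makes each sub-integral produce a bounded multiple of $|t|^{-\alpha_2}$. These estimates show that for $\varepsilon$ small, $\mathcal{B}$ maps the ball $\{\|[u,v]\|_{\mathcal{L}_{\alpha_1,\alpha_2}}\le2\varepsilon\}$ into itself and is a contraction, giving existence and uniqueness; the locally Lipschitz data-solution map follows by comparing two solutions with the same estimates, and $[u(t),v(t)]\rightharpoonup[u_0,v_0]$ in $\mathcal{S}'$ as $t\to0^+$ follows by testing the linear part against Schwartz functions and noting the vanishing of the Duhamel term.

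For part (ii) I would rerun the identical scheme in the stronger $X_{(b+1,d)}$-norm, now carrying an extra weight $|t|^h$, i.e.\ with weights $\alpha_1+h$ near $t=0$ and $\alpha_2+h$ for large $t$. The linear term is controlled by the added hypothesis \eqref{conreg2}, while the nonlinear time-integral closes under the stated restrictions $0\le h<1-\alpha_1 b$ and $h\le\tfrac32-\alpha_2$, which are precisely the conditions keeping the shifted short-time Beta integral and the long-time integral convergent; a bootstrap starting from the solution of part (i) then upgrades its regularity to \eqref{reg-2-1}--\eqref{reg-2-2}. I expect the long-time integral estimate to be the main obstacle, as it requires a careful multi-region decomposition reconciling the two distinct decay rates of $\phi_{b+1}$ with the two-scale weight structure of $\mathcal{L}_{\alpha_1,\alpha_2}$, whereas the short-time contribution is pinned down cleanly by the scaling identity $\alpha_1(b-1)=1-\beta$.
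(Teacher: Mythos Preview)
Your proposal is correct and follows essentially the same approach as the paper: the paper packages your group estimate for $G(t)[0,\psi]$ and the time-integral bound as Lemma~\ref{estim1} and Proposition~\ref{lem8}\,(ii), respectively, and then runs the same contraction on the ball $\{\|[u,v]\|_{\mathcal{L}_{\alpha_1,\alpha_2}}\le 2\varepsilon\}$; for part (ii) the paper implements your ``bootstrap from part (i)'' by showing the Picard iterates are uniformly bounded in the $X_{(b+1,d)}$-weighted norm, using one factor in $L^{(b+1,d)}$ and the remaining $b-1$ factors in $L^{(b+1,\infty)}$ (controlled by part (i)). Your long-time splitting $\int_0^1+\int_1^{|t|}$ with the further case $|t-s|\lessgtr 1$ matches the paper's decomposition $\int_0^1+\int_1^{t/2}+\int_{t/2}^{t-1}+\int_{t-1}^t$ in the proof of Proposition~\ref{lem8}\,(ii).
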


\

\begin{remark}
\label{obser222} Parameters $b_{0}(n)$ and $b_{1}(n)$ appear from time
integrability conditions close to zero in the estimates of the nonlinear
components of the integral formulation (\ref{intergralEq}). In fact, they seem to be of
universal characteristic. For example, in some works on Euclidean case, they
arise in connection with critical Sobolev index and optimal time-decay for the
norm $\Vert\cdot\Vert_{L^{b+1}}$, respectively, see e.g. \cite{Kato, Naka-Oza}.
\end{remark}

\

\subsection{Boussinesq group estimates in $X_{(p,d)}$-spaces}\label{S41}

In this section, we provide estimates for the Boussinesq group $G(t)$ defined
in (\ref{aux-Bou-rel-1}) in the $X_{(p,d)}$-setting.

\begin{lemma}
\label{estim1} Let $n\geq2$, $1\leq d\leq\infty$, and $2<p<\infty$. Then,
there exists a constant $C_{G}>0$ (independent of $t$) such that
\begin{equation}
\left\Vert G(t)[\varphi,\psi]\right\Vert _{X_{(p,d)}}\leq C_{G}\phi_{p}%
(t)\max\{\left\Vert \varphi\right\Vert _{{(p}^{\prime}{,d)}},\left\Vert
\psi\right\Vert _{{(p}^{\prime}{,d)}}\},\label{semigroup}%
\end{equation}
for all $\varphi,\psi\in$ $L^{(p^{\prime},d)}(\mathbb{H}^{n}),$ where
\begin{equation}
\phi_{p}(t)=%
\begin{cases}
|t|^{-\frac{n}{2}\left(  1-\frac{2}{p}\right)  } & \hbox{   if   }|t|\leq1,\\
|t|^{-\frac{3}{2}} & \hbox{   if   }|t|>1.
\end{cases}
\label{aux-phi-2}%
\end{equation}

\end{lemma}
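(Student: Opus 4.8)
The plan is to reduce the matrix-valued group $G(t)$ to the scalar group $e^{-itP}$, which is already controlled by Lemma~\ref{DisperInter}, at the cost of the boundedness of one zero-order spectral multiplier on Lorentz spaces. Write $Q=\Lambda J^{-1}=\sqrt{-\Delta_x/(1-\Delta_x)}$, the Fourier multiplier with symbol $|\xi|/\langle\xi\rangle$, and let $l_1(t),l_2(t)$ be the operators with symbols $\cos(t|\xi|\langle\xi\rangle)$ and $\sin(t|\xi|\langle\xi\rangle)$ as in \eqref{aux-g1}--\eqref{aux-g2}. Reading off the symbols \eqref{aux-symbol-1} in \eqref{BouOP}, one has, at the level of Fourier multipliers, $g_1(t)=g_4(t)=l_1(t)$, $g_2(t)=-Q\,l_2(t)$ and $g_3(t)=Q^{-1}l_2(t)$, where $Q^{-1}$ carries the (also bounded) symbol $\langle\xi\rangle/|\xi|$.

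Next I would unfold the $X_{(p,d)}$-norm. Since $\Lambda^{-1}H^{-1}_{(p,d)}=\{\psi:\ Q\psi\in L^{(p,d)}\}$ with $\|\psi\|_{\Lambda^{-1}H^{-1}_{(p,d)}}=\|J^{-1}\Lambda\psi\|_{(p,d)}=\|Q\psi\|_{(p,d)}$ (because $J^{-1}\Lambda=Q$), the two slots of $G(t)[\varphi,\psi]$ become
\[
\left\Vert g_1(t)\varphi+g_2(t)\psi\right\Vert_{(p,d)}=\left\Vert l_1(t)\varphi-Q\,l_2(t)\psi\right\Vert_{(p,d)}
\]
and
\[
\left\Vert g_3(t)\varphi+g_4(t)\psi\right\Vert_{\Lambda^{-1}H^{-1}_{(p,d)}}=\left\Vert Q\big(Q^{-1}l_2(t)\varphi+l_1(t)\psi\big)\right\Vert_{(p,d)}=\left\Vert l_2(t)\varphi+Q\,l_1(t)\psi\right\Vert_{(p,d)}.
\]
The decisive point is the cancellation in the second slot: the operator $\Lambda$ hidden in the norm absorbs the factor $Q^{-1}=J\Lambda^{-1}$ produced by $g_3$, so the $\varphi$-contribution collapses to the clean dispersive term $l_2(t)\varphi$ and no negative power of $Q$ survives.

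The main technical input, and the step I expect to be the real obstacle, is the $L^{(p,d)}(\mathbb{H}^n)$-boundedness of $Q$ for $1<p<\infty$ and $1\leq d\leq\infty$. Here the geometry helps decisively: the spectral gap $\rho^2>0$ forces $|\xi|=\sqrt{\lambda^2+\rho^2}\geq\rho>0$, so on the spectrum $[\rho^2,\infty)$ of $-\Delta_x$ the symbol $|\xi|/\langle\xi\rangle=\sqrt{|\xi|^2/(1+|\xi|^2)}$ is smooth, bounded, bounded away from $0$, and satisfies the scale-invariant Mikhlin--H\"ormander derivative bounds. A spectral multiplier theorem on $\mathbb{H}^n$ then yields boundedness of $Q$ on $L^p$ for $1<p<\infty$, and real interpolation upgrades this to all Lorentz spaces $L^{(p,d)}$; the identical argument applies to $Q^{-1}$ and legitimizes the symbol-level manipulations above.

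Finally I would combine the pieces. Applying Lemma~\ref{DisperInter} to $l_1(t)=\tfrac12(e^{-itP}+e^{itP})$ and $l_2(t)=\tfrac1{2i}(e^{itP}-e^{-itP})$, and using that $\phi_p$ depends only on $|t|$ so that $e^{+itP}$ obeys the same bound as $e^{-itP}$, gives $\left\Vert l_j(t)f\right\Vert_{(p,d)}\leq C\phi_p(t)\left\Vert f\right\Vert_{(p',d)}$ for $j=1,2$. Together with $\left\Vert Q\,l_j(t)f\right\Vert_{(p,d)}\leq C_Q\left\Vert l_j(t)f\right\Vert_{(p,d)}$, the triangle inequality bounds each of the two slots above by $C(1+C_Q)\phi_p(t)\max\{\left\Vert\varphi\right\Vert_{(p',d)},\left\Vert\psi\right\Vert_{(p',d)}\}$. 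Taking the maximum over the two slots yields \eqref{semigroup} with $C_G=C(1+C_Q)$.
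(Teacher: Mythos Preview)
Your proof is correct and follows essentially the same route as the paper's: decompose $G(t)$ via $l_1,l_2$ and $Q=\Lambda J^{-1}$, invoke Lemma~\ref{DisperInter} for $l_1,l_2$, use the $L^{(p,d)}$-boundedness of $Q$, and exploit the cancellation $Q\cdot Q^{-1}=\mathrm{id}$ in the second slot. The only difference is cosmetic: the paper simply asserts the $L^{(p,d)}$-continuity of $\Lambda J^{-1}$ without comment, whereas you spell out why the spectral gap makes the symbol $|\xi|/\langle\xi\rangle$ amenable to a H\"ormander--Mikhlin-type multiplier theorem on $\mathbb{H}^n$, which is a welcome clarification.
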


\textbf{Proof.} First recall the operators $l_{1}(t)=g_{1}(t)$ and $l_{2}(t)$
defined in (\ref{aux-g1}) and (\ref{aux-g2}), respectively. Using estimate
(\ref{group2}), we obtain that%

\begin{equation}
\Vert l_{i}(t)\varphi\Vert_{{(p,d)}}\leq C\phi_{p}(t)\Vert\varphi\Vert
_{{(p}^{\prime}{,d)}}.\label{estimaux}%
\end{equation}
Denoting by $\pi_{i}$ the $i$-th projection, we can write $G(t)\overrightarrow
{z}=(\pi_{1}G(t)\overrightarrow{z},\pi_{2}G(t)\overrightarrow{z})$ with
$\overrightarrow{z}=[\varphi,\psi]$. Noting that $g_{2}(t)=\Lambda J^{-1}%
l_{2}(t)$ and using the $L^{(p^{\prime},d)}$-continuity of the operator
$\Lambda J^{-1},$ it follows from (\ref{BouOP}) and (\ref{estimaux}) that
\begin{align}
\left\Vert \pi_{1}G(t)\overrightarrow{z}\right\Vert _{{(p,d)}} &
\leq\left\Vert g_{1}(t)\varphi\right\Vert _{(p,d)}+\left\Vert g_{2}%
(t)\psi\right\Vert _{(p,d)}\nonumber\\
&  \leq\left\Vert l_{1}(t)\varphi\right\Vert _{(p,d)}+\left\Vert \Lambda
J^{-1}l_{2}(t)\psi\right\Vert _{(p,d)}\nonumber\\
&  \leq C\phi_{p}(t)\max\{\left\Vert \varphi\right\Vert _{{(p}^{\prime}{,d)}%
},\left\Vert \psi\right\Vert _{{(p}^{\prime}{,d)}}\}\text{,}\label{est2}%
\end{align}
for all $t\neq0,$where the constant $C>0$ is independent of $t$ (in fact, it depends solely on $n,p$). Moreover, since $\Lambda J^{-1}g_{3}(t)=l_{2}(t)$ (see
(\ref{BouOP})), we can express
\[
\Lambda J^{-1}\pi_{2}G(t)\overrightarrow{z}=\Lambda J^{-1}g_{3}(t)\varphi
+\Lambda J^{-1}g_{4}(t)\psi=l_{2}(t)\varphi+\Lambda J^{-1}g_{1}(t)\psi.
\]
Thus, the $L^{(p^{\prime},d)}$-continuity of $\Lambda J^{-1}$ and estimate
(\ref{estimaux}) lead us to
\begin{align}
\left\Vert \pi_{2}G(t)\overrightarrow{z}\right\Vert _{\Lambda^{-1}%
H_{(p,d)}^{-1}} &  =\left\Vert \Lambda\pi_{2}G(t)\overrightarrow{z}\right\Vert
_{H_{(p,d)}^{-1}}\nonumber\\
&  =\left\Vert \Lambda J^{-1}\pi_{2}G(t)\overrightarrow{z}\right\Vert
_{(p,d)}\nonumber\\
&  \leq\left\Vert l_{2}(t)\varphi\right\Vert _{(p,d)}+\left\Vert \Lambda
J^{-1}g_{1}(t)\psi\right\Vert _{(p,d)}\nonumber\\
&  \leq C\phi_{p}(t)\max\{\left\Vert \varphi\right\Vert _{{(p}^{\prime}{,d)}%
},\left\Vert \psi\right\Vert _{{(p}^{\prime}{,d)}}\}.\text{ }\label{est222}%
\end{align}
Estimate (\ref{semigroup}) follows by putting together (\ref{est2}) and
(\ref{est222}). \fin

\subsection{Nonlinear estimates}

\label{S42}

In order to carry out a contraction argument for the integral equation
(\ref{intergralEq}), we need to develop suitable local and global-in-time
estimates for its nonlinear part, namely
\begin{equation}\label{aux-non-1}
\mathcal{G}(u)=-\int_{0}^{t}G(t-s)[0,f(u(s))]ds.
\end{equation}
Before stating such estimates, let us recall the so-called Beta function
\begin{equation}
B(\theta_{1},\theta_{2})=\int_{0}^{1}(1-s)^{\theta_{1}-1}s^{\theta_{2}%
-1}ds,\label{aux-beta-1}%
\end{equation}
which is finite for $\theta_{1},\theta_{2}>0.$ Furthermore, considering
$l_{i}=1-\theta_{i},$ $i=1,2,$ we can make the change of variable
$s\rightarrow st$ to express
\begin{equation}
\int_{0}^{t}(t-s)^{-l_{1}}s^{-l_{2}}ds=t^{1-l_{1}-l_{2}}\int_{0}%
^{1}(1-s)^{-l_{1}}s^{-l_{2}}ds=t^{1-l_{1}-l_{2}}B(1-l_{1},1-l_{2}%
),\label{Beta}%
\end{equation}
for each fixed $t>0.$

Our nonlinear estimates are the subject of the next proposition.

\begin{proposition}
[Nonlinear estimates]\label{lem8}

\begin{itemize}
\item[(i)] (Local-in-time estimates) Let $1<b<b_{0}(n),$ $1\leq d\leq\infty,$
and $\beta=\frac{n(b-1)}{2(b+1)}$. Then, there is a universal constant
$K_{\beta}>0$ such that%
\begin{align}
\sup_{\left\vert t\right\vert <T}\left\vert t\right\vert ^{\beta}%
\Vert\mathcal{G}(u)-\mathcal{G}(\tilde{u})\Vert_{X_{(b+1,d)}} &  \leq
K_{\beta}T^{1-b\beta}\sup_{0<\left\vert t\right\vert <T}\left\vert
t\right\vert ^{\beta}\Vert u-\tilde{u}\Vert_{(b+1,d)}\label{sc9}\\
&  \times\sup_{0<\left\vert t\right\vert <T}(\left\vert t\right\vert
^{\beta(b-1)}\Vert u\Vert_{(b+1{,\infty)}}^{b-1}+\left\vert t\right\vert
^{\beta(b-1)}\Vert\tilde{u}\Vert_{(b+1{,\infty)}}^{b-1}).\nonumber\\
& \nonumber
\end{align}

\item[(ii)] (Global-in-time estimates) Let $b_{0}(n)<b<b_{1}(n),$ $1\leq
d\leq\infty$, $\alpha_{1}=\frac{1}{b-1}-\frac{n}{2(b+1)},$ {$0\leq
h<1-{\alpha}_{1}b$ and }$0\leq\alpha_{2}\leq\frac{3}{2}-h$. Then, there is a
universal constant $K_{\alpha_{1},\alpha_{2},h}>0$ such that
\begin{align}
&  \sup_{0<\left\vert t\right\vert <1}\left\vert t\right\vert ^{\alpha_{1}%
+h}\Vert\mathcal{G}(u)-\mathcal{G}(\tilde{u})\Vert_{X_{(b+1,d)}}%
+\sup_{\left\vert t\right\vert >1}\left\vert t\right\vert ^{\alpha_{2}+h}%
\Vert\mathcal{G}(u)-\mathcal{G}(\tilde{u})\Vert_{X_{(b+1,d)}}\nonumber\\
&  \leq K_{\alpha_{1},\alpha_{2},h}\left(  \sup_{0<\left\vert t\right\vert
<1}\left\vert t\right\vert ^{\alpha_{1}+h}\Vert u-\tilde{u}\Vert
_{(b+1,d)}+\sup_{\left\vert t\right\vert >1}\left\vert t\right\vert
^{\alpha_{2}+h}\Vert u-\tilde{u}\Vert_{(b+1,d)}\right)  \label{sc8}\\
&  \times\left(  \sup_{0<\left\vert t\right\vert <1}\left\vert t\right\vert
^{\alpha_{1}(b-1)}\left(  \Vert u\Vert_{(b+1,\infty)}^{b-1}+\Vert\tilde
{u}\Vert_{(b+1,\infty)}^{b-1}\right)  +\sup_{\left\vert t\right\vert
>1}\left\vert t\right\vert ^{\alpha_{2}(b-1)}\left(  \Vert u\Vert
_{(b+1,\infty)}^{b-1}+\Vert\tilde{u}\Vert_{(b+1,\infty)}^{b-1}\right)
\right)  .\nonumber\\
& \nonumber
\end{align}

\end{itemize}
\end{proposition}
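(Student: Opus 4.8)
The plan is to reduce both parts to a single pointwise-in-time bound for the integrand of $\mathcal{G}(u)-\mathcal{G}(\tilde u)$ and then integrate in time against the relevant weights. First I would apply the Boussinesq-group estimate of Lemma \ref{estim1} with $p=b+1$ (hence $p'=\frac{b+1}{b}$) to the increment $G(t-s)[0,f(u(s))-f(\tilde u(s))]$; since the first slot vanishes this yields
\begin{equation*}
\|G(t-s)[0,f(u(s))-f(\tilde u(s))]\|_{X_{(b+1,d)}}\leq C_G\,\phi_{b+1}(t-s)\,\|f(u(s))-f(\tilde u(s))\|_{(\frac{b+1}{b},d)},
\end{equation*}
so the continuity of $\Lambda J^{-1}$ on Lorentz spaces is already absorbed into the $X_{(b+1,d)}$-bound. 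Then I would estimate the nonlinearity by combining the Lipschitz-type condition \eqref{Bou-Cond-f} with the H\"older inequality \eqref{Holder}, using the split $\frac{b}{b+1}=\frac{b-1}{b+1}+\frac{1}{b+1}$ together with $\||w|^{b-1}\|_{(\frac{b+1}{b-1},\infty)}=\|w\|_{(b+1,\infty)}^{b-1}$, to obtain
\begin{equation*}
\|f(u(s))-f(\tilde u(s))\|_{(\frac{b+1}{b},d)}\leq C\big(\|u(s)\|_{(b+1,\infty)}^{b-1}+\|\tilde u(s)\|_{(b+1,\infty)}^{b-1}\big)\|u(s)-\tilde u(s)\|_{(b+1,d)}.
\end{equation*}

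For part (i) I would use that $\phi_{b+1}(\tau)\leq|\tau|^{-\beta}$ for every $\tau$ (because $\beta=\frac{n(b-1)}{2(b+1)}<1\leq\frac32$ is exactly the small-time exponent). Inserting the weights defining $\|\cdot\|_{\mathcal{L}_\beta^T}$ turns the integrand into $|t-s|^{-\beta}|s|^{-\beta b}$ times the suprema appearing in \eqref{sc9}, and the scaling identity \eqref{Beta} gives $\int_0^t|t-s|^{-\beta}|s|^{-\beta b}\,ds=|t|^{1-\beta-\beta b}B(1-\beta,1-\beta b)$. Multiplying by $|t|^{\beta}$ produces the factor $|t|^{1-\beta b}\leq T^{1-\beta b}$, which is precisely \eqref{sc9}; the Beta integral converges because $\beta b<1$, an inequality equivalent to $b<b_0(n)$.

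For part (ii) the same integrand bound applies, but now $\phi_{b+1}$ and the weights change form across $|s|=1$, $|t-s|=1$ and $|t|=1$, so the core of the argument---and the main obstacle---is the region decomposition of $\int_0^t$ and the verification that each piece decays at the prescribed rate. When $|t|<1$ only the small-time kernel $|t-s|^{-\beta}$ and the weight $|s|^{-\alpha_1 b-h}$ occur, and since $1-\beta=\alpha_1(b-1)$ the exponent collapses to $|t|^{-\alpha_1-h}$; convergence at $s=0$ needs $\alpha_1 b+h<1$, which is exactly where $b>b_0(n)$ enters through $1-\alpha_1 b>0$. When $|t|>1$ I would split $\int_0^t$ over $s\in(0,1)$, the intermediate range, and $s\in(t-1,t)$: on the block where $t-s$ is large one replaces $\phi_{b+1}(t-s)$ by $|t-s|^{-3/2}$ and uses the near-origin integrability $\alpha_1 b+h<1$ together with the elementary bound $\alpha_2(b-1)\geq0$; on the block where $s\simeq t$ one uses $\int\phi_{b+1}(t-s)\,ds\leq C$; and the matching of the long-time rate forces exactly $\alpha_2+h\leq\frac32$. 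Assembling the pieces bounds the $|t|>1$ part by $|t|^{-\alpha_2-h}$, and combining both regimes yields \eqref{sc8}.
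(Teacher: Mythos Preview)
Your proposal is correct and follows essentially the same approach as the paper: apply Lemma~\ref{estim1} with $p=b+1$, control $f(u)-f(\tilde u)$ via \eqref{Bou-Cond-f} and H\"older \eqref{Holder}, then reduce to the Beta integral \eqref{Beta} for part (i) and to a region decomposition of $\int_0^t$ (near $s=0$, intermediate, and near $s=t$) for the large-time part of (ii), with the same convergence conditions $\beta b<1$, $\alpha_1 b+h<1$, $\alpha_2+h\le\tfrac32$. The only cosmetic difference is that for part (i) you bypass the paper's reduction to $T<1$ by using directly that $\phi_{b+1}(\tau)\le|\tau|^{-\beta}$ for all $\tau$ (valid since $\beta<1<\tfrac32$), which is an equivalent way to handle the same issue.
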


\textbf{Proof}{\textit{.}} We are going to prove the estimates only for $t>0$,
since the otherwise case follows an entirely parallel way. Also, for item
(i)$,$ we can assume $T<1$ (without loss of generality), because $t^{a_{1}%
}\sim t^{a_{2}}$ when $t\in(1,T)$ for each fixed pair $a_{1},a_{2}%
\in\mathbb{R}$.

By employing Lemma \ref{estim1} with $p=\frac{b{+1}}{b}$ and Holder inequality
(\ref{Holder}), we can estimate
\begin{align}
\Vert\mathcal{G}(u)-\mathcal{G}(\tilde{u})\Vert_{X_{(b+1,d)}} &  \leq\int
_{0}^{t}\Vert G(t-s)[0,f(u)-f(\tilde{u})]\Vert_{X_{(b+1,d)}}ds\nonumber\\
&  \hspace{-3.3cm}\leq C\int_{0}^{t}\phi_{b+1}(t-s)\Vert(\left\vert
u-\tilde{u}\right\vert )(\left\vert u\right\vert ^{b-1}+\left\vert \tilde
{u}\right\vert ^{b-1})\Vert_{(\frac{b{+1}}{b}{,d)}}ds\nonumber\\
&  \hspace{-3.3cm}\leq C\int_{0}^{t}(t-s)^{-\beta}\Vert u-\tilde{u}%
\Vert_{(b+1{,d)}}\left(  \Vert u\Vert_{(b+1{,\infty)}}^{\rho-1}+\Vert\tilde
{u}\Vert_{(b+1{,\infty)}}^{\rho-1}\right)  ds,\label{aux2}%
\end{align}
for $t\in(0,T).$ Note that $\beta b<1$ (and $\beta<1$) when $1<b<b_{0}(n).$
Now, taking the corresponding time-weighted supremum over $t\in(0,T)$ and
using (\ref{Beta}), we can bound the r.h.s of (\ref{aux2}) as follows
\begin{align*}
&  C\int_{0}^{t}(t-s)^{-\beta}s^{-\beta b}ds\left(  \sup_{0<t<T}t^{\beta}\Vert
u-\tilde{u}\Vert_{(b+1{,d)}}\sup_{0<t<T}\left(  t^{\beta(b-1)}\Vert
u\Vert_{(b+1{,d)}}^{b-1}+t^{\beta(b-1)}\Vert\tilde{u}\Vert_{(b+1{,d)}}%
^{b-1}\right)  \right)  \\
&  =K_{\beta}t^{-\beta}t^{1-\beta b}\left(  \sup_{0<t<T}t^{\beta}\Vert
u-\tilde{u}\Vert_{(b+1{,d)}}\sup_{0<t<T}\left(  t^{\beta(b-1)}\Vert
u\Vert_{(b+1{,d)}}^{b-1}+t^{\beta(b-1)}\Vert\tilde{u}\Vert_{(b+1{,d)}}%
^{b-1}\right)  \right)  ,
\end{align*}
which leads us to (\ref{sc9}), where $K_\beta = CB(1-\beta,1-\beta b)$.

Next we turn to item (ii). Applying Lemma \ref{estim1} yields
\begin{align}
\Vert\mathcal{G}(u)-\mathcal{G}(\tilde{u})\Vert_{X_{(b+1,d)}} &  \leq
C\int_{0}^{t}\phi_{b+1}(t-s)\Vert(\left\vert u-\tilde{u}\right\vert
)(\left\vert u\right\vert ^{b-1}+\left\vert \tilde{u}\right\vert ^{b-1}%
)\Vert_{(\frac{b{+1}}{b}{,d)}}ds\nonumber\\
&  =C\int_{0}^{\delta_{t}}\phi_{b+1}(t-s)\Vert(\left\vert u-\tilde
{u}\right\vert )(\left\vert u\right\vert ^{b-1}+\left\vert \tilde
{u}\right\vert ^{b-1})\Vert_{(\frac{b{+1}}{b}{,d)}}ds\nonumber\\
&  +C\int_{\delta_{t}}^{t}\phi_{b+1}(t-s)\Vert(\left\vert u-\tilde
{u}\right\vert )(\left\vert u\right\vert ^{b-1}+\left\vert \tilde
{u}\right\vert ^{b-1})\Vert_{(\frac{b{+1}}{b}{,d)}}ds,\label{aux-est-g1}%
\end{align}
where $\delta_{t}\in\lbrack0,t).$ Let us first consider $t\in(0,2).$ For fixed
$a_{1},a_{2}\in\mathbb{R}$, note that%
\begin{equation}
t^{a_{1}}\sim t^{a_{2}}\text{ when }t\in(1,2).\label{aux-t-power-1}%
\end{equation}
Note also that $t-s\in(1,2)$ provided that $t\in(1,2)$ and $s\in(0,t-1).$

In the next estimate, we consider $\delta_{t}=0$ when $t\in(0,1)$ and
$\delta_{t}=t-1$ when $t\in(1,2)$. Then, using the definition of $\phi_{p}$
(see (\ref{aux-phi-2})), (\ref{aux-t-power-1}), H\"{o}lder inequality
(\ref{Holder}) and property (\ref{Beta}), we arrive at
\begin{align}
\text{R.H.S. of (\ref{aux-est-g1})} &  \leq C\int_{0}^{\delta_{t}%
}(t-s)^{-\beta}\Vert(\left\vert u-\tilde{u}\right\vert )(\left\vert
u\right\vert ^{b-1}+\left\vert \tilde{u}\right\vert ^{b-1})\Vert_{(\frac
{b{+1}}{b}{,d)}}ds\nonumber\\
&  +C\int_{\delta_{t}}^{t}(t-s)^{-\beta}\Vert(\left\vert u-\tilde
{u}\right\vert )(\left\vert u\right\vert ^{b-1}+\left\vert \tilde
{u}\right\vert ^{b-1})\Vert_{(\frac{b{+1}}{b}{,d)}}ds\nonumber\\
&  =C\int_{0}^{t}(t-s)^{-\beta}\Vert(\left\vert u-\tilde{u}\right\vert
)(\left\vert u\right\vert ^{b-1}+\left\vert \tilde{u}\right\vert ^{b-1}%
)\Vert_{(\frac{b{+1}}{b}{,d)}}ds\nonumber\\
&  \leq C\int_{0}^{t}(t-s)^{-\beta}s^{-\alpha_{1}b-h}ds\nonumber\\
&  \times\left(  \sup_{0<t<1}t^{\alpha_{1}+h}\Vert u-\tilde{u}\Vert
_{(b+1{,d)}}\sup_{0<t<1}\left(  t^{\alpha_{1}(b-1)}\Vert u\Vert_{(b+1{,\infty
)}}^{b-1}+t^{\alpha_{1}(b-1)}\Vert\tilde{u}\Vert_{(b+1{,\infty)}}%
^{b-1}\right)  \right)  \nonumber\\
&  =K_{\alpha_{1},\alpha_{2},h}t^{-\alpha_{1}-h}\left(  \sup_{t>0}%
t^{\alpha_{1}+h}\Vert u-\tilde{u}\Vert_{(b+1{,d)}}\sup_{t>0}\left(
t^{\alpha_{1}(b-1)}\Vert u\Vert_{(b+1{,\infty)}}^{b-1}+t^{\alpha_{1}%
(b-1)}\Vert\tilde{u}\Vert_{(b+1{,\infty)}}^{b-1}\right)\right),\label{aux-est-g2}
\end{align}
because $\beta<1$, $\alpha_{1}b+h<1$ and $\beta+(b-1)\alpha_{1}=1$, where $K_{\alpha_1,\alpha_2,h} = CB(1-\beta,1-\alpha_1 b-h)$.

In the sequel, let $t\geq2$ and consider $\delta_{t}=1$ in (\ref{aux-est-g1}).
It follows that
\[
\Vert\mathcal{G}(u)-\mathcal{G}(\tilde{u})\Vert_{X_{(b+1,d)}}\leq A_{1}+A_{2},
\]
where
\begin{align*}
A_{1} &  \leq C\int_{0}^{1}\phi_{b+1}(t-s)s^{-\alpha_{1}b-h}ds\left(
\sup_{0<t<1}t^{\alpha_{1}+h}\Vert u-\tilde{u}\Vert_{(b+1{,d)}}\sup
_{0<t<1}\left(  t^{\alpha_{1}(b-1)}\Vert u\Vert_{(b+1{,\infty)}}%
^{b-1}+t^{\alpha_{1}(b-1)}\Vert\tilde{u}\Vert_{(b+1{,\infty)}}^{b-1}\right)
\right)  ,\\
A_{2} &  \leq C\int_{1}^{t}\phi_{b+1}(t-s)s^{-\alpha_{2}b-h}ds\left(
\sup_{t>1}t^{\alpha_{2}+h}\Vert u-\tilde{u}\Vert_{(b+1{,d)}}\sup_{t>1}\left(
t^{\alpha_{2}(b-1)}\Vert u\Vert_{(b+1{,\infty)}}^{b-1}+t^{\alpha_{2}%
(b-1)}\Vert\tilde{u}\Vert_{(b+1{,\infty)}}^{b-1}\right)  \right)  .
\end{align*}
Therefore, we can estimate
\begin{align}
&  t^{\alpha_{2}+h}\Vert\mathcal{G}(u)-\mathcal{G}(\tilde{u})\Vert
_{X_{(b+1,d)}}\nonumber\\
&  \leq CL(t)\left(  \sup_{0<t<1}t^{\alpha_{1}+h}\Vert u-\tilde{u}%
\Vert_{(b+1,d)}+\sup_{t>1}t^{\alpha_{2}+h}\Vert u-\tilde{u}\Vert
_{(b+1,d)}\right)  \nonumber\\
&  \times\left(  \sup_{0<t<1}t^{\alpha_{1}(b-1)}\left(  \Vert u\Vert
_{(b+1,\infty)}^{b-1}+\Vert\tilde{u}\Vert_{(b+1,\infty)}^{b-1}\right)
+\sup_{t>1}t^{\alpha_{2}(b-1)}\left(  \Vert u\Vert_{(b+1,\infty)}^{b-1}%
+\Vert\tilde{u}\Vert_{(b+1,\infty)}^{b-1}\right)  \right)  ,\label{aux-est-g3}%
\end{align}
where
\begin{equation}
L(t)=Ct^{\alpha_{2}+h}\left(  \int_{0}^{1}\phi_{b+1}(t-s)s^{-\alpha_{1}%
b-h}ds+\int_{1}^{t}\phi_{b+1}(t-s)s^{-\alpha_{2}b-h}ds\right)
.\label{aux-est-g4}%
\end{equation}
By (\ref{aux-phi-2}) and using $\alpha_{1}b+h<1$, $\beta=\frac{n}{2}%
(\frac{b-1}{b+1})<1$ and $\alpha_{2}+h\leq\frac{3}{2},$ we obtain that%

\begin{align}
L(t) &  =Ct^{\alpha_{2}+h}\left(  \int_{0}^{1}\phi_{b+1}(t-s)s^{-\alpha
_{1}b-h}ds+\int_{1}^{t-1}\phi_{b+1}(t-s)s^{-\alpha_{2}b-h}ds+\int_{t-1}%
^{t}\phi_{b+1}(t-s)s^{-\alpha_{2}b-h}ds\right)  \nonumber\\
&  \leq Ct^{\alpha_{2}+h}\left(  \int_{0}^{1}(t-s)^{-\frac{3}{2}}%
s^{-\alpha_{1}b-h}ds+\int_{1}^{t/2}(t-s)^{-\frac{3}{2}}s^{-\alpha_{2}%
b-h}ds\right.  \nonumber\\
&  \left.  \text{ \ \ \ \ \ \ \ \ \ \ \ \ \ \ }+\int_{t/2}^{t-1}%
(t-s)^{-\frac{3}{2}}s^{-\alpha_{2}b-h}ds+\int_{t-1}^{t}(t-s)^{-\frac{n}%
{2}(\frac{b-1}{b+1})}s^{-\alpha_{2}b-h}ds\right)  \nonumber\\
&  \leq Ct^{\alpha_{2}+h}\left(  (t-1)^{-\frac{3}{2}}\int_{0}^{1}%
s^{-\alpha_{1}b-h}ds+(t/2)^{-\frac{3}{2}}\int_{1}^{t/2}s^{-\alpha_{2}%
b-h}ds\right.  \nonumber\\
&  \left.  \text{\ \ \ \ \ \ \ \ \ \ \ \ \ \ }+(t/2)^{-\alpha_{2}b-h}%
\int_{t/2}^{t-1}(t-s)^{-\frac{3}{2}}ds+(t-1)^{-\alpha_{2}b-h}\int_{t-1}%
^{t}(t-s)^{-\beta}ds\right)  \nonumber\\
&  \leq Ct^{\alpha_{2}+h}\left(  (t-1)^{-\frac{3}{2}}(1)^{1-\alpha_{1}%
b-h}+(t/2)^{-\frac{3}{2}}\left\vert (t/2)^{1-\alpha_{2}b-h}-1\right\vert
\right.  \text{ (if }\alpha_{2}b+h\neq1\text{)}\nonumber\\
&  \left.  \text{\ \ \ \ \ \ \ \ \ \ \ \ \ \ }+(t/2)^{-\alpha_{2}%
b-h}\left\vert (1)^{-\frac{1}{2}}-(t/2)^{-\frac{1}{2}}\right\vert
+(t-1)^{-\alpha_{2}b-h}(1)^{1-\beta}\right)  \nonumber\\
&  \leq Ct^{\alpha_{2}+h}\left(  t^{-\frac{3}{2}}+t^{-\alpha_{2}b-h-\frac
{1}{2}}+t^{-\frac{3}{2}}+t^{-\alpha_{2}b-h}+t^{-\alpha_{2}b-h-\frac{1}{2}%
}+t^{-\alpha_{2}b-h}\right)  \nonumber\\
&  \leq C\left(  t^{\alpha_{2}+h-\frac{3}{2}}+t^{-\alpha_{2}(b-1)-\frac{1}{2}%
}+t^{-\alpha_{2}(b-1)}\right)  \nonumber\\
&  \leq C,\text{ for }t\geq2.\label{aux-est-g5}%
\end{align}
In the case $\alpha_{2}b+h=1$, we also arrive at the same estimate $L(t)\leq
C$, since $(t)^{-\frac{3}{2}}\int_{1}^{t/2}s^{-1}ds=t^{-\frac{3}{2}}%
\ln(t/2)\lesssim t^{-\frac{3}{2}+\varepsilon}$ with small $\varepsilon>0$, for
$t\geq2.$

Estimate (\ref{sc8}) follows by considering (\ref{aux-est-g2}) in
(\ref{aux-est-g1}) and (\ref{aux-est-g5}) in (\ref{aux-est-g3}) and then
taking the supremum over $t\in(0,2)$ and $t\in\lbrack2,\infty),$ respectively,
and using (\ref{aux-t-power-1}). \fin

\subsection{Proof of the local well-posedness result}

\label{S43}

\smallskip\textbf{\noindent Local-in-time well-posedness.} Let us consider the
closed ball $\mathcal{H}_{2\eta}=\{[u,v]\in\mathcal{L}_{\beta}^{T}%
;\Vert\lbrack u,v]\Vert_{\mathcal{L}_{\beta}^{T}}\leq2\eta\}$ equipped with
the complete metric $\mathcal{M}(\cdot,\cdot)$ given by
\[
\mathcal{M}([u,v],[\tilde{u},\tilde{v}])=\Vert\lbrack u-\tilde{u},v-\tilde
{v}]\Vert_{\mathcal{L}_{\beta}^{T}}.
\]
We are going to show that the map
\begin{equation}\label{fi2-1}
\Phi([u,v])=G(t)[u_{0},v_{0}]+\mathcal{G}(u)
\end{equation}
is a contraction on $(\mathcal{H}_{2\eta},\mathcal{M}),$ for some $\eta>0$
chosen later, where $\mathcal{G}(u)$ is as in (\ref{aux-non-1}). Applying
Lemma \ref{estim1} with $p=b+1$ and $d=\infty$, and using that $t^{a_{1}}\sim
t^{a_{2}}$ in $(1,t_{0})$ for fixed $a_{1},a_{2}\in\mathbb{R}$ and $t_{0}>1$,
we can estimate%
\begin{align}
\Vert G(t)[u_{0},v_{0}]\Vert_{\mathcal{L}_{\beta}^{T}} &  =\sup_{0<\left\vert
t\right\vert <T}\left\vert t\right\vert ^{\beta}\Vert G(t)[u_{0},v_{0}%
]\Vert_{X_{(b+1,\infty)}}\nonumber\\
&  \leq C_{G}\left(  \sup_{0<\left\vert t\right\vert <T}\left\vert
t\right\vert ^{\beta}\phi_{b+1}(t)\right)  \Vert\lbrack u_{0},v_{0}%
]\Vert_{(\frac{b+1}{b},\infty)}\nonumber\\
&  \leq C_{1}\Vert\lbrack u_{0},v_{0}]\Vert_{(\frac{b+1}{b},\infty
)}.\label{aux21}%
\end{align}
Next, taking $\eta=C_{1}\Vert\lbrack u_{0},v_{0}]\Vert_{(\frac{b+1}{b}%
,\infty)}$ and using Proposition \ref{lem8} (i) with $\tilde{u}=0$ and
$d=\infty$ lead us to%

\begin{align*}
\Vert\Phi([u,v])\Vert_{\mathcal{L}_{\beta}^{T}} &  \leq\Vert G(t)[u_{0}%
,v_{0}]\Vert_{\mathcal{L}_{\beta}^{T}}+\Vert\mathcal{G}(u)\Vert_{\mathcal{L}%
_{\beta}^{T}}\\
&  \leq C_{1}\Vert\lbrack u_{0},v_{0}]\Vert_{(\frac{b+1}{b},\infty)}+K_{\beta
}T^{1-b\beta}\Vert\lbrack u,v]\Vert_{\mathcal{L}_{\beta}^{T}}^{b}\\
&  \leq\eta+K_{\beta}T^{1-b\beta}2^{b}\eta^{b}\leq2\eta,
\end{align*}
for all $[u,v]\in\mathcal{H}_{2\eta},$ provided that $T>0$ satisfies
$2^{b-1}K_{\beta}\eta^{b-1}T^{1-b\beta}<\frac{1}{2}.$ Therefore,
$\Phi(\mathcal{H}_{2\eta})\subset\mathcal{H}_{2\eta}.$ Moreover, by
Proposition \ref{lem8} (i), we\ have that%

\begin{align}
\Vert\Phi([u,v])-\Phi([\tilde{u},\tilde{v}])\Vert_{\mathcal{L}_{\beta}^{T}} &
=\Vert\mathcal{G}(u)-\mathcal{G}(\tilde{u})\Vert_{\mathcal{L}_{\beta}^{T}%
}\nonumber\\
&  \leq K_{\beta}T^{1-b\beta}\Vert\lbrack u,v]-[\tilde{u},\tilde{v}%
]\Vert_{\mathcal{L}_{\beta}^{T}}\left(  \Vert\lbrack u,v]\Vert_{\mathcal{L}%
_{\beta}^{T}}^{b-1}+\Vert\lbrack\tilde{u},\tilde{v}]\Vert_{\mathcal{L}_{\beta
}^{T}}^{b-1}\right)  \nonumber\\
&  \leq K_{\beta}T^{1-b\beta}2^{b}\eta^{b-1}\Vert\lbrack u,v]-[\tilde
{u},\tilde{v}]\Vert_{\mathcal{L}_{\beta}^{T}},\label{aux3}%
\end{align}
for all $[u,v],[\tilde{u},\tilde{v}]\in$ $\mathcal{H}_{2\eta}$. It follows
that $\Phi$ is a contraction in $\mathcal{H}_{2\eta}$, as $K_{\beta
}T^{1-b\beta}2^{b}\eta^{b-1}<1$. By the contraction mapping principle, the map
$\Phi$ has a fixed point which is a solution $[u,v]$ in $\mathcal{L}_{\beta
}^{T}$ for the integral equation (\ref{intergralEq}). The weak convergence
$[u,v]\rightharpoonup$ $[u_{0},v_{0}]$ as $t\rightarrow0^{+}$ follows by
standard arguments and is left to the reader.

Now we turn to the Lipschitz continuity\ of the data-solution map. Let
$[u,v],[\tilde{u},\tilde{v}]\in$ $\mathcal{H}_{2\eta}$ be two mild solutions
with initial data $[u_{0},v_{0}],[\tilde{u}_{0},\tilde{v}_{0}],$ respectively.
Proceeding similarly to (\ref{aux3}), we obtain that
\begin{align*}
\Vert\lbrack u,v]-[\tilde{u},\tilde{v}]\Vert_{\mathcal{L}_{\beta}^{T}} &
=\Vert G(t)[u_{0}-\tilde{u}_{0},v_{0}-\tilde{v}_{0}]\Vert_{\mathcal{L}_{\beta
}^{T}}+\Vert\mathcal{G}(u)-\mathcal{G}(\tilde{u})\Vert_{\mathcal{G}_{\beta
}^{T}}\\
&  \leq C_{1}\Vert\lbrack u_{0},v_{0}]-[\tilde{u}_{0},\tilde{v}_{0}%
]\Vert_{(\frac{b+1}{b},\infty)}+K_{\beta}T^{1-b\beta}2^{b}\eta^{b-1}%
\Vert\lbrack u,v]-[\tilde{u},\tilde{v}]\Vert_{\mathcal{L}_{\beta}^{T}},
\end{align*}
which implies the desired continuity, because $K_{\beta}T^{1-b\beta}2^{b}%
\eta^{b-1}<1.$ \fin

\noindent$L^{(b+1,d)}-$\textbf{regularity.} Due to the above contraction
argument, the solution $[u,v]$ is the limit in $\mathcal{H}_{2\eta}$ of the
Picard sequence $\{[u_{k},v_{k}]\}_{k\in\mathbb{N}}$ defined as
\begin{equation}
\lbrack u_{1},v_{1}]=G(t)[u_{0},v_{0}]\text{ and }[u_{k+1},v_{k+1}%
]=G(t)[u_{0},v_{0}]+\mathcal{G}(u_{k}).\label{seq2}%
\end{equation}
We claim that the bounds%

\begin{equation}
\sup_{0<\left\vert t\right\vert <T}\left\vert t\right\vert ^{\beta}\Vert
u_{k}(\cdot,t)\Vert_{(b+1,d)}\leq C<\infty\text{ and }\sup_{0<\left\vert
t\right\vert <T}\left\vert t\right\vert ^{\beta}\Vert v_{k}(\cdot
,t)\Vert_{\Lambda^{-1}H_{(b+1,d)}^{-1}}\leq C<\infty\text{ }\label{bb1}%
\end{equation}
hold true, for all $k\in\mathbb{N}$. With them in hand and the uniqueness of
the limit in $\mathcal{S}^{\prime}(\mathbb{H}^{n})$, we obtain (\ref{reg1}).

The remainder of the proof is to show (\ref{bb1}). For that, using that
$\Vert\lbrack u_{k},v_{k}]\Vert_{\mathcal{L}_{\beta}^{T}}\leq2\eta,$ Lemma
\ref{estim1} (with $p=b+1)$ and Proposition \ref{lem8} (with $\widetilde{u}%
=0$), we can handle (\ref{seq2}) as%

\begin{align}
\sup_{0<\left\vert t\right\vert <T}\left\vert t\right\vert ^{\beta}\Vert\text{
}[u_{k+1},v_{k+1}]\Vert_{X_{(b+1,d)}}  & \leq C_{G}\Vert\lbrack u_{0}%
,v_{0}]\Vert_{(\frac{b+1}{b},d)}\nonumber\\
& +{K}_{\beta}T^{1-b\beta}\sup_{0<\left\vert t\right\vert <T}\left\vert
t\right\vert ^{\beta}\Vert u_{k}(\cdot,t)\Vert_{(b+1,d)}\left(  \sup
_{0<\left\vert t\right\vert <T}\left\vert t\right\vert ^{\beta}\Vert
u_{k}(\cdot,t)\Vert_{(b+1,\infty)}\right)  ^{b-1}\nonumber\\
& \leq C_{G}\Vert\lbrack u_{0},v_{0}]\Vert_{(\frac{b+1}{b},d)}+{K}_{\beta
}T^{1-b\beta}2^{b-1}\eta^{b-1}\sup_{0<\left\vert t\right\vert <T}\left\vert
t\right\vert ^{\beta}\Vert u_{k}(\cdot,t)\Vert_{(b+1,d)}.\label{sa2}%
\end{align}
\bigskip Next, denote $D_{1}=C_{G}\Vert\lbrack u_{0},v_{0}]\Vert_{(\frac
{b+1}{b},d)}<\infty$ and
\[
D_{k}=\sup_{0<\left\vert t\right\vert <T}\left\vert t\right\vert ^{\beta}%
\Vert\lbrack u_{k},v_{k}]\Vert_{X_{(b+1,d)}}=\sup_{0<\left\vert t\right\vert
<T}\left\vert t\right\vert ^{\beta}\max\left\{  \Vert u_{k}\Vert
_{(b+1,d)},\Vert v_{k}\Vert_{\Lambda^{-1}H_{(b+1,d)}^{-1}}\right\}  ,\text{
for }k\geq2.\text{ }%
\]
Note that estimate (\ref{sa2}) can be rewritten as $D_{k+1}\leq D_{1}%
+{K}_{\beta}T^{1-b\beta}2^{b-1}\eta^{b-1}D_{k}$. Now, choosing $T>0$ such that
${K}_{\beta}T^{1-b\beta}2^{b-1}\eta^{b-1}<1/2$, and carrying out an induction
argument, we obtain that $D_{k}\leq2D_{1}=C,$ for all $k\in\mathbb{N}$, and
then the desired claim follows.\fin

\subsection{Proof of the global well-posedness result}

\label{S44}

\textbf{Global well-posedness.} Consider the closed ball $\mathcal{H}%
_{2\varepsilon}=\{[u,v]\in\mathcal{L}_{\alpha_{1},\alpha_{2}};\Vert\lbrack
u,v]\Vert_{\mathcal{L}_{\alpha_{1},\alpha_{2}}}\leq2\varepsilon\}$ in
$\mathcal{L}_{\alpha_{1},\alpha_{2}}.$\ Recalling the map $\Phi$ defined in
(\ref{fi2-1}), it follows from Proposition \ref{lem8} (ii) with $h=0$ and
$d=\infty$ that
\begin{align}
\Vert\Phi([u,v])-\Phi([\tilde{u},\tilde{v}])\Vert_{\mathcal{L}_{\alpha
_{1},\alpha_{2}}} &  =\Vert\mathcal{G}(u)-\mathcal{G}(\tilde{u})\Vert
_{\mathcal{L}_{\alpha_{1},\alpha_{2}}}\nonumber\\
&  \leq K_{\alpha_{1},\alpha_{2},0}\Vert\lbrack u,v]-[\tilde{u},\tilde
{v}]\Vert_{\mathcal{L}_{\alpha_{1},\alpha_{2}}}\left(  \Vert\lbrack
u,v]\Vert_{\mathcal{L}_{\alpha_{1},\alpha_{2}}}^{b-1}+\Vert\lbrack\tilde
{u},\tilde{v}]\Vert_{\mathcal{L}_{\alpha_{1},\alpha_{2}}}^{b-1}\right)
\label{aux4}\\
&  \leq2^{b}\varepsilon^{b-1}K_{\alpha_{1},\alpha_{2},0}\Vert\lbrack
u,v]-[\tilde{u},\tilde{v}]\Vert_{\mathcal{L}_{\alpha_{1},\alpha_{2}}%
},\nonumber
\end{align}
for all $[u,v],[\tilde{u},\tilde{v}]\in\mathcal{H}_{2\varepsilon}.$ Since
$\mathcal{G}(0)=0$ and $\Vert G(t)[u_{0},v_{0}]\Vert_{\mathcal{L}_{\alpha
_{1},\alpha_{2}}}=\Vert\lbrack u_{0},v_{0}]\Vert_{\mathcal{E}_{0}}%
\leq\varepsilon,$ inequality (\ref{aux4}) with $[\tilde{u},\tilde{v}]=0$ yields%

\begin{align}
\Vert\Phi([u,v])\Vert_{\mathcal{L}_{\alpha_{1},\alpha_{2}}} &  \leq\Vert
G(t)[u_{0},v_{0}]\Vert_{\mathcal{L}_{\alpha_{1},\alpha_{2}}}+\Vert
\mathcal{G}(u)\Vert_{\mathcal{L}_{\alpha_{1},\alpha_{2}}}\nonumber\\
&  \leq\varepsilon+K_{\alpha_{1},\alpha_{2},0}\Vert\lbrack u,v]\Vert
_{\mathcal{L}_{\alpha_{1},\alpha_{2}}}^{b}\nonumber\\
&  \leq\varepsilon+2^{b}\varepsilon^{b}K_{\alpha_{1},\alpha_{2},0}%
\leq2\varepsilon,\label{aux6}%
\end{align}
for all $[u,v]\in\mathcal{H}_{2\varepsilon}$, provided that $2^{b}%
\varepsilon^{b-1}K_{\alpha_{1},a_{2},0}<1.$ Taking $0<\varepsilon
<(2^{b}K_{\alpha_{1},a_{2},0})^{-1/(b-1)},$ estimates (\ref{aux4}) and
(\ref{aux6}) imply that $\Phi$ is a contraction in $\mathcal{H}_{2\varepsilon
}$.  It follows that integral equation (\ref{intergralEq}) has a unique
solution $[u,v]$ satisfying $\Vert\lbrack u,v]\Vert_{\mathcal{L}_{\alpha
_{1},\alpha_{2}}}\leq2\varepsilon,$ which is given by the fixed point of
$\Phi$ in $\mathcal{H}_{2\varepsilon}$.

Next, we show the Lipschitz continuity of the data-solution map. For that, let
$[u,v],[\tilde{u},\tilde{v}]\in\mathcal{H}_{2\varepsilon}$ be two solutions
for (\ref{intergralEq}) with initial data $[u_{0},v_{0}],[\tilde{u}_{0}%
,\tilde{v}_{0}]\in\mathcal{E}_{0}$, respectively. Then, we can estimate
\begin{align*}
\Vert\lbrack u,v]-[\tilde{u},\tilde{v}]\Vert_{\mathcal{L}_{\alpha_{1}%
,\alpha_{2}}} &  =\Vert G(t)[u_{0}-\tilde{u}_{0},v_{0}-\tilde{v}_{0}%
]\Vert_{\mathcal{L}_{\alpha_{1},\alpha_{2}}}+\Vert\mathcal{G}(u)-\mathcal{G}%
(\tilde{u})\Vert_{\mathcal{L}_{\alpha_{1},\alpha_{2}}}\\
&  \leq\Vert\lbrack u_{0}-\tilde{u}_{0},v_{0}-\tilde{v}_{0}]\Vert
_{\mathcal{E}_{0}}+2^{b}\varepsilon^{b-1}K_{\alpha_{1},\alpha_{2},0}%
\Vert\lbrack u,v]-[\tilde{u},\tilde{v}]\Vert_{\mathcal{L}_{\alpha_{1}%
,\alpha_{2}}}.
\end{align*}
Reminding that $2^{b}\varepsilon^{b-1}K_{\alpha_{1},\alpha_{2},0}<1$ in the
above inequality, we are done.

$L^{(b+1,d)}$\textbf{-regularity. }Due to the contraction argument, we have
that the solution $[u,v]$ obtained in item (i) is the limit in $\mathcal{L}%
_{\alpha_{1},\alpha_{2}}$ of the Picard sequence (\ref{seq2}). Denote%
\begin{equation}
\Gamma_{k,h}=\sup_{0<\left\vert t\right\vert <1}\left\vert t\right\vert
^{\alpha_{1}+h}\left\Vert [u_{k},v_{k}]\right\Vert _{X_{(b+1,d)}}%
+\sup_{\left\vert t\right\vert >1}\left\vert t\right\vert ^{\alpha_{2}%
+h}\left\Vert [u_{k},v_{k}]\right\Vert _{X_{(b+1,d)}},\label{aux33}%
\end{equation}
where $\{[u_{k},v_{k}]\}_{k\in\mathbb{N}}$ is as in (\ref{seq2}). We are going
to show that (\ref{aux33}) is uniformly bounded w.r.t. $k\in\mathbb{N}$,
provided that $\Vert\lbrack u_{0},v_{0}]\Vert_{\mathcal{E}_{0}}\leq
\varepsilon_{0}$ for some $0<\varepsilon_{0}\leq\varepsilon$. In fact, if
$\Vert\lbrack u_{0},v_{0}]\Vert_{\mathcal{E}_{0}}\leq\varepsilon_{0}$, we have
that the solution $[u,v]\in\mathcal{H}_{2\varepsilon_{0}}$ and $\{[u_{k}%
,v_{k}]\}_{k\in\mathbb{N}}\subset\mathcal{H}_{2\varepsilon_{0}}$.

Note that $\Gamma_{1,h}$ is finite by hypothesis. Applying now Proposition
\ref{lem8} (ii) with $\tilde{u}=0$ and taking $\varepsilon_{0}$ such that
$R=2^{b-1}\varepsilon_{0}^{b-1}K_{\alpha_{1},\alpha_{2},h}<1,$ we obtain that%

\begin{align}
\Gamma_{k+1,h}= &  \sup_{0<\left\vert t\right\vert <1}\left\vert t\right\vert
^{\alpha_{1}}\left\Vert [u_{k+1},v_{k+1}]\right\Vert _{X_{(b+1,d)}}%
+\sup_{\left\vert t\right\vert >1}\left\vert t\right\vert ^{\alpha_{2}%
}\left\Vert [u_{k+1},v_{k+1}]\right\Vert _{X_{(b+1,d)}}\nonumber\\
&  \leq\sup_{0<\left\vert t\right\vert <1}\left\vert t\right\vert ^{\alpha
_{1}}\left\Vert G(t)[u_{0},v_{0}]\right\Vert _{X_{(b+1,d)}}+\sup_{\left\vert
t\right\vert >1}\left\vert t\right\vert ^{\alpha_{2}}\left\Vert G(t)[u_{0}%
,v_{0}]\right\Vert _{X_{(b+1,d)}}\nonumber\\
&  +K_{\alpha_{1},\alpha_{2},h}(\sup_{0<\left\vert t\right\vert <1}\left\vert
t\right\vert ^{\alpha_{1}+h}\Vert u_{k}\Vert_{(b+1,d)}+\sup_{\left\vert
t\right\vert >1}\left\vert t\right\vert ^{\alpha_{2}+h}\Vert u_{k}%
\Vert_{(b+1,d)})\nonumber\\
&  \times\left(  \sup_{0<\left\vert t\right\vert <1}\left\vert t\right\vert
^{\alpha_{1}}\Vert u_{k}\Vert_{(b+1,\infty)}+\sup_{\left\vert t\right\vert
>1}\left\vert t\right\vert ^{\alpha_{2}}\Vert u_{k}\Vert_{(b+1,\infty
)}\right)  ^{b-1}\nonumber\\
&  \leq\Gamma_{1,h}+2^{b-1}\varepsilon_{0}^{b-1}K_{\alpha_{1},\alpha_{2}%
,h}\Gamma_{k,h}\leq\frac{1}{1-R}\Gamma_{1,h}\text{,}\label{aux777}%
\end{align}
for all $k\in\mathbb{N}$, and then the desired uniform boundedness follows. Leveraging this boundedness along with the uniqueness of the limit in the sense of distributions, we get (\ref{reg-2-1})-(\ref{reg-2-2}). \fin

\section{Asymptotic behavior}

\label{S5}

\subsection{Scattering}

\label{S51}

In this section, we show a scattering property for the solutions derived in the preceding sections.

\begin{theorem}
[Scattering]\label{scat}Assume the same hypotheses of Theorem \ref{GWP} and
let $[u,v]$ be the mild solution of (\ref{Bou1}) with initial data $[u_{0},v_{0}]$. Then, there exist $[u_{0}^{+},v_{0}^{+}]$ and $[u_{0}^{-},v_{0}^{-}]$ belonging to $\mathcal{E}_{0}$ such that
\begin{align}
\left\Vert \lbrack u(t)-u^{+}(t),v(t)-v^{+}(t)]\right\Vert _{X_{(b+1,d)}} &
=O(t^{-b(\alpha_{2}+h)}),\text{ as }t\rightarrow+\infty,\label{scat10}\\
\left\Vert \lbrack u(t)-u^{-}(t),v(t)-v^{-}(t)]\right\Vert _{X_{(b+1,d)}} &
=O(\left\vert t\right\vert ^{-b(\alpha_{2}+h)}),\text{ as }t\rightarrow
-\infty,\label{scat102}%
\end{align}
where $[u^{+}(t),v^{+}(t)]$ and $[u^{-}(t),v^{-}(t)]$ are the unique solutions
of the linear problem (\ref{linear11}) with respective initial data
{$[u_{0}^{+},v_{0}^{+}]$ and $[u_{0}^{-},v_{0}^{-}]$.}
\end{theorem}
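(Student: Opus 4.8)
The plan is to construct the scattering states by the classical device of pulling the nonlinear solution back along the linear Boussinesq group and quantifying the convergence by the long-time dispersive decay. Throughout I write $\mathcal{R}(u)(t)=\int_{t}^{\infty}G(t-s)[0,f(u(s))]\,ds$ for the forward tail of the Duhamel term and set $[u^{+}(t),v^{+}(t)]:=[u(t),v(t)]-\mathcal{R}(u)(t)$, so that by the integral equation \eqref{intergralEq} we already have $[u(t),v(t)]-[u^{+}(t),v^{+}(t)]=\mathcal{R}(u)(t)$.

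First I would show that $\mathcal{R}(u)(t)$ is well defined for each $t>0$ and decays at the asserted rate. Applying Lemma \ref{estim1} with $p=b+1$, the H\"older-type inequality \eqref{Holder} in Lorentz spaces, and the growth condition \eqref{Bou-Cond-f} (which gives $|f(u)|\lesssim|u|^{b}$), one bounds the integrand by $\phi_{b+1}(s-t)\,\|u(s)\|_{(b+1,d)}\|u(s)\|_{(b+1,\infty)}^{b-1}$. Invoking the $L^{(b+1,d)}$-regularity \eqref{reg-2-1} from Theorem \ref{GWP}(ii) together with the embedding $L^{(b+1,d)}\subset L^{(b+1,\infty)}$ from \eqref{Inclusion} lets me carry the improved weight on \emph{all} $b$ factors, so that $\|f(u(s))\|_{(\frac{b+1}{b},d)}\lesssim s^{-b(\alpha_{2}+h)}$ for $s>1$. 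The key estimate is then
\[
\|\mathcal{R}(u)(t)\|_{X_{(b+1,d)}}\lesssim\int_{t}^{\infty}\phi_{b+1}(s-t)\,s^{-b(\alpha_{2}+h)}\,ds\lesssim t^{-b(\alpha_{2}+h)},\qquad t\geq2,
\]
obtained by splitting over $(t,t+1)$, $(t+1,2t)$ and $(2t,\infty)$: on the first region $\phi_{b+1}(s-t)=(s-t)^{-\beta}$ is integrable since $\beta<1$ (equivalently $b<b_{1}(n)$); on the second $\phi_{b+1}(s-t)=(s-t)^{-3/2}$ is integrable and $s\simeq t$; on the third $(s-t)^{-3/2}\lesssim s^{-3/2}$ gives the even faster rate $t^{-1/2-b(\alpha_{2}+h)}$. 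Taking $h=0$ shows in particular $\mathcal{R}(u)\in\mathcal{L}_{\alpha_{1},\alpha_{2}}$, and the displayed bound is exactly \eqref{scat10}.

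Next I would identify the scattering data and confirm it lies in $\mathcal{E}_{0}$. A direct computation, rewriting the integral equation with base point $t_{2}$ (legitimate by the group property $G(t-s)=G(t-t_{2})G(t_{2}-s)$), shows
\[
[u^{+},v^{+}](t_{1})=G(t_{1}-t_{2})\big([u,v](t_{2})-\mathcal{R}(u)(t_{2})\big)=G(t_{1}-t_{2})\,[u^{+},v^{+}](t_{2}),
\]
so $[u^{+},v^{+}]$ is a genuine linear-flow orbit; hence $[u_{0}^{+},v_{0}^{+}]:=G(-t)[u^{+},v^{+}](t)$ is well defined and independent of $t$, and $[u^{+},v^{+}]$ solves \eqref{linear11} with this data. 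By the very definition \eqref{n3} of the $\mathcal{E}_{0}$-norm, $G(t)[u_{0}^{+},v_{0}^{+}]=[u^{+},v^{+}](t)$ gives
\[
\|[u_{0}^{+},v_{0}^{+}]\|_{\mathcal{E}_{0}}=\|[u^{+},v^{+}]\|_{\mathcal{L}_{\alpha_{1},\alpha_{2}}}\leq\|[u,v]\|_{\mathcal{L}_{\alpha_{1},\alpha_{2}}}+\|\mathcal{R}(u)\|_{\mathcal{L}_{\alpha_{1},\alpha_{2}}}<\infty,
\]
so $[u_{0}^{+},v_{0}^{+}]\in\mathcal{E}_{0}$. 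The backward states $[u_{0}^{-},v_{0}^{-}]$ and the bound \eqref{scat102} follow by the symmetric argument using the backward tail $\int_{-\infty}^{t}G(t-s)[0,f(u(s))]\,ds$.

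I expect the main obstacle to be the first step: making the improper tail integral rigorous and extracting the sharp exponent. The difficulty is twofold. Near $s=t$ the kernel is singular like $(s-t)^{-\beta}$, so convergence there requires $\beta<1$, which ties the admissible range of $b$ to $b_{1}(n)$. At $s=\infty$, both the convergence of the integral and the precise rate $b(\alpha_{2}+h)$ hinge on pairing the $(s-t)^{-3/2}$ long-time dispersive decay from Theorem \ref{Dispersive} with the embedding $L^{(b+1,d)}\hookrightarrow L^{(b+1,\infty)}$ so as to upgrade every factor of $u$ to the decay weight $\alpha_{2}+h$. This is precisely where the hyperbolic geometry enters: the faster-than-Euclidean long-time decay is what renders the tail integrable and produces the improved scattering rate.
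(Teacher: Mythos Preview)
Your proposal is correct and follows essentially the same route as the paper: both express $[u,v]-[u^{+},v^{+}]$ as the forward Duhamel tail $\int_{t}^{\infty}G(t-s)[0,f(u(s))]\,ds$ and bound it via Lemma~\ref{estim1} together with the $L^{(b+1,d)}$-regularity \eqref{reg-2-1}, splitting the time integral according to the short- and long-time regimes of $\phi_{b+1}$. The only cosmetic differences are that the paper defines the scattering data directly as $[u_{0}^{+},v_{0}^{+}]=[u_{0},v_{0}]-\int_{0}^{\infty}G(-s)[0,f(u(s))]\,ds$ (rather than first recognizing $[u^{+},v^{+}]$ as a linear orbit) and uses the two subintervals $(t,t+1)$, $(t+1,\infty)$ instead of your three.
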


\textbf{Proof.} We are going to show the scattering property only for $t>0$.
The case $t\rightarrow-\infty$ follows similarly and so we omit it. Consider
the data
\begin{equation}
\lbrack u_{0}^{+},v_{0}^{+}]=[u_{0},v_{0}]-\int_{0}^{\infty}%
G(-s)[0,f(u(s))]ds\in\mathcal{E}_{0}\label{aux-scat-20}%
\end{equation}
and the corresponding solution $[u^{+},v^{+}]=G(t)[u_{0}^{+},v_{0}^{+}]$ of
the linear problem (\ref{linear11}) for $t>0.$ In view of (\ref{aux-scat-20}),
we can express $[u^{+},v^{+}]$ as
\begin{equation}
\lbrack u^{+},v^{+}]=G(t)[u_{0},v_{0}]-\int_{0}^{\infty}%
G(t-s)[0,f(u(s))]ds.\label{aux12}%
\end{equation}
As we are interested in large values {}{}of time $t$, we can assume that
$t>1$. Recalling that $u$ satisfies (\ref{intergralEq}), as well as the
property (\ref{reg-2-1}) and considering $\Gamma_{1,h},R$ as in (\ref{aux777}), we arrive
at
\begin{align*}
\Vert\lbrack u-u^{+},v-v^{+}]\Vert_{X_{(b+1,d)}} &  =\left\Vert \int
_{t}^{\infty}G(t-s)[0,f(u(s))]ds\right\Vert _{X_{(b+1,d)}}\\
&  \leq C\left(  \int_{t}^{t+1}(s-t)^{-\frac{n}{2}(\frac{b-1}{b+1}%
)}s^{-b(\alpha_{2}+h)}ds+\int_{t+1}^{\infty}(s-t)^{-\frac{3}{2}}%
s^{-b(\alpha_{2}+h)}ds\right)  \\
&  \times\left(  \sup_{s>1}s^{\alpha_{2}+h}\Vert\lbrack u(s)\Vert
_{(b+1,d)}\right)  ^{b}\\
&  \leq C(\frac{1}{1-R}\Gamma_{1,h})^{b}\left(  t^{-b(\alpha_{2}+h)}\int
_{t}^{t+1}(s-t)^{-\beta}ds+(t+1)^{-b(\alpha_{2}+h)}\int_{t+1}^{\infty
}(s-t)^{-\frac{3}{2}}ds\right)  \\
&  \leq Ct^{-b(\alpha_{2}+h)},
\end{align*}
as required.\fin

\subsection{Stability}

\label{S52} In the next result we show that the solutions obtained in Theorem
\ref{GWP} present a kind of polynomial time-weighted stability

\begin{theorem}
[Polynomial stability]\label{Stable} Suppose the same hypotheses of Theorem
\ref{GWP} but with the strict condition $\alpha_{2}<\frac{3}{2}-h$. Assume
also that $[u,v]$ and $[\tilde{u},\tilde{v}]$ belonging to $\mathcal{L}%
_{\alpha_{1},\alpha_{2}}$ are two mild solutions of (\ref{Bou1}) with respective
initial data $[u_{0},v_{0}]$ and $[\tilde{u}_{0},\tilde{v}_{0}]$. Then, we
have that
\begin{equation}
\lim_{|t|\rightarrow\infty}|t|^{\alpha_{2}+h}\left\Vert [u(t)-\tilde
{u}(t),v(t)-\tilde{v}(t)]\right\Vert _{X_{(b+1,d)}}=0\label{Stability}%
\end{equation}
if and only if
\begin{equation}
\lim_{|t|\rightarrow\infty}|t|^{\alpha_{2}+h}\left\Vert G(t)[u_{0}-\tilde
{u}_{0},v_{0}-\tilde{v}_{0}]\right\Vert _{X_{(b+1,d)}}=0.\label{condition}%
\end{equation}
In view of (\ref{semigroup}), condition (\ref{condition}) holds particularly
for $\varphi=u_{0}-\tilde{u}_{0}$ and $\psi=v_{0}-\tilde{v}_{0}$ belonging to
$L^{(\frac{b+1}{b},d)}(\mathbb{H}^{n})$.
\end{theorem}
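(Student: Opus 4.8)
The plan is to reduce everything to the behaviour of the nonlinear remainder along the flow. Writing $w(t)=[u(t)-\tilde u(t),v(t)-\tilde v(t)]$ and $W(t)=G(t)[u_0-\tilde u_0,v_0-\tilde v_0]$, subtracting the two copies of the integral equation (\ref{intergralEq}) gives the identity
$$w(t)=W(t)-\int_0^t G(t-s)[0,f(u(s))-f(\tilde u(s))]\,ds=:W(t)+N(t),$$
valid for all $t$. Since $\|w(t)\|_{X_{(b+1,d)}}\le\|W(t)\|_{X_{(b+1,d)}}+\|N(t)\|_{X_{(b+1,d)}}$ and, symmetrically, $\|W(t)\|_{X_{(b+1,d)}}\le\|w(t)\|_{X_{(b+1,d)}}+\|N(t)\|_{X_{(b+1,d)}}$, the whole equivalence follows once I control the weighted norm of $N$. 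Treating only $t\to+\infty$ (the case $t\to-\infty$ being identical), set $\sigma(t):=\sup_{s\ge t}s^{\alpha_2+h}\|w(s)\|_{X_{(b+1,d)}}$ and $P(t):=\sup_{s\ge t}s^{\alpha_2+h}\|W(s)\|_{X_{(b+1,d)}}$; these are finite by the $L^{(b+1,d)}$-regularity of Theorem \ref{GWP}(ii), non-increasing, hence convergent, and their limits vanish exactly when (\ref{Stability}), resp. (\ref{condition}), hold. The target estimate is $t^{\alpha_2+h}\|N(t)\|_{X_{(b+1,d)}}\le A(t)+R\,\sigma(t/2)$ for large $t$, with $A(t)\to0$ and a constant $R<1$.

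First I would split $N(t)=-\int_0^{t/2}-\int_{t/2}^{t}$. On $[0,t/2]$ one has $t-s\ge t/2\ge1$, so $\phi_{b+1}(t-s)=(t-s)^{-3/2}$; combining Lemma \ref{estim1} with the H\"older inequality (\ref{Holder}) as in (\ref{aux2}), and using $\|u\|_{(b+1,\infty)},\|\tilde u\|_{(b+1,\infty)}\lesssim s^{-\alpha_2}$ (membership in $\mathcal{L}_{\alpha_1,\alpha_2}$), $\|w\|_{(b+1,d)}\lesssim s^{-\alpha_2-h}$ (Theorem \ref{GWP}(ii)), together with $\alpha_1 b+h<1$ near $s=0$, yields
$$t^{\alpha_2+h}\Big\|\int_0^{t/2}\Big\|_{X_{(b+1,d)}}\lesssim t^{\alpha_2+h-3/2}\Big(1+\int_1^{t/2}s^{-\alpha_2 b-h}\,ds\Big)\lesssim t^{\alpha_2+h-3/2}+t^{-\alpha_2(b-1)-1/2}.$$
Both exponents are strictly negative, the first precisely because of the strict hypothesis $\alpha_2<\tfrac32-h$ and the second for all $b>1,\alpha_2\ge0$; this defines $A(t)\to0$. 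This is exactly the place where replacing $\le$ by $<$ is indispensable.

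Next, on $[t/2,t]$ I would bound $\|w(s)\|_{(b+1,d)}\le s^{-\alpha_2-h}\sigma(t/2)$ and $\|u\|_{(b+1,\infty)}^{b-1}+\|\tilde u\|_{(b+1,\infty)}^{b-1}\lesssim s^{-\alpha_2(b-1)}$ for $s\ge t/2$, so the integrand is $\lesssim\phi_{b+1}(t-s)\,s^{-\alpha_2 b-h}\sigma(t/2)$. Pulling out $(t/2)^{-\alpha_2 b-h}$ and using the uniform bound $\int_{t/2}^{t}\phi_{b+1}(t-s)\,ds=\int_0^{t/2}\phi_{b+1}(\xi)\,d\xi\le\int_0^1\xi^{-\beta}\,d\xi+\int_1^\infty\xi^{-3/2}\,d\xi<\infty$ gives
$$t^{\alpha_2+h}\Big\|\int_{t/2}^{t}\Big\|_{X_{(b+1,d)}}\lesssim t^{-\alpha_2(b-1)}\sigma(t/2).$$
When $\alpha_2>0$ the prefactor $t^{-\alpha_2(b-1)}\to0$, so $t^{\alpha_2+h}\|N(t)\|_{X_{(b+1,d)}}\to0$ unconditionally and the equivalence is immediate from $w=W+N$. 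When $\alpha_2=0$ the prefactor is $1$ and the constant implicit in $\lesssim$ is the contraction constant $R=C\varepsilon^{\,b-1}K_{\alpha_1,\alpha_2,h}<1$ furnished by Proposition \ref{lem8}(ii) and the smallness fixed in Theorem \ref{GWP}.

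Finally I would pass to the limit. Taking $\sup_{s\ge t}$ in $w=W+N$ and using the displayed bound (with $A,\sigma$ non-increasing) gives $\sigma(t)\le P(t)+A(t)+R\,\sigma(t/2)$; letting $t\to\infty$ and writing $\ell_\sigma=\lim\sigma$, $\ell_P=\lim P$ yields $(1-R)\ell_\sigma\le\ell_P$. The symmetric identity $W=w-N$, estimated the same way, gives $(1-R)\ell_P\le\ell_\sigma$. Since $R<1$, these force $\ell_\sigma=0\iff\ell_P=0$, which is precisely the asserted equivalence between (\ref{Stability}) and (\ref{condition}); the concluding sentence of the statement then follows by applying Lemma \ref{estim1} to $\varphi=u_0-\tilde u_0$, $\psi=v_0-\tilde v_0$. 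The main obstacle is the endpoint case $\alpha_2=0$: there the far-field piece $\int_{t/2}^t$ does not decay by itself and must be absorbed through the smallness $R<1$, so the argument genuinely needs the contraction structure rather than mere boundedness; the accompanying delicacy is the bookkeeping of which norm is used where (the $(b+1,d)$-norm for the difference factor versus the $(b+1,\infty)$-norm for the two solution factors) and the systematic use of the strict inequality $\alpha_2<\tfrac32-h$.
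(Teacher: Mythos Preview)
Your argument is correct and takes a route that differs from the paper's in one useful respect. Both proofs subtract the two integral equations and reduce the equivalence to controlling the nonlinear remainder $N(t)$. The paper splits $\int_0^t$ into four intervals $[0,1]$, $[1,t/2]$, $[t/2,t-1]$, $[t-1,t]$, bounds $t^{\alpha_2+h}\|N(t)\|_{X_{(b+1,d)}}$ by a constant multiple of $t^{\alpha_2+h-3/2}+t^{-\alpha_2(b-1)-1/2}+t^{-\alpha_2(b-1)}$ using only the global $L^{(b+1,d)}$-boundedness of the solutions, and observes that this tends to zero, whence the equivalence is immediate. Your two-piece split $[0,t/2]\cup[t/2,t]$ produces the same decaying term $A(t)$ on the first interval, but on the second you extract the tail factor $\sigma(t/2)$ and close via the absorption inequality $(1-R)\ell_\sigma\le\ell_P$.

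The payoff of your route appears at the endpoint $\alpha_2=0$, which the hypotheses allow: there the paper's bound contains the non-decaying term $t^{-\alpha_2(b-1)}=1$, so the direct argument does not close, whereas your absorption with $R<1$ does. For $\alpha_2>0$ the paper's approach is slightly more economical, since no absorption is needed. Two minor slips, neither affecting the conclusion: in the reverse direction you should get $\ell_P\le(1+R)\ell_\sigma$ rather than $(1-R)\ell_P\le\ell_\sigma$, since $N$ is controlled by $\sigma$ and not by $P$; and the constant $R$ coming out of your $[t/2,t]$ estimate is comparable to but not literally the $K_{\alpha_1,\alpha_2,h}$ of Proposition~\ref{lem8}(ii), so strictly speaking one may need $\varepsilon$ marginally smaller than the threshold fixed in Theorem~\ref{GWP}, which is a harmless adjustment.
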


\textbf{Proof.} First we show (\ref{Stability}) by assuming (\ref{condition}). Again we treat only the case $t\rightarrow\infty$ and leave
the opposite case to the reader. Taking the difference between the two mild
solutions and afterwards computing the norm $t^{\alpha_{2}+h}\left\Vert
\cdot\right\Vert _{X_{(b+1,d)}}$ on the resulting expression, we arrive at%
\begin{align}
t^{\alpha_{2}+h}\left\Vert [u(t)-\tilde{u}(t),v(t)-\tilde{v}(t)]\right\Vert
_{X_{(b+1,d)}} &  \leq t^{\alpha_{2}+h}\left\Vert G(t)[u_{0}-\tilde{u}%
_{0},v_{0}-\tilde{v}_{0})]\right\Vert _{X_{(b+1,d)}}\nonumber\\
&  +|t|^{\alpha_{2}+h}\int_{0}^{t}\left\Vert G(t-s)[0,f(u(s))-f(\tilde
{u}(s))]ds\right\Vert _{X_{(b+1,d)}}ds.\label{ine-13}%
\end{align}
Now consider $\varepsilon>0$ in such a way that $R=2^{b-1}\varepsilon
^{b-1}K_{\alpha_{1},\alpha_{2},h}<1$, where $K_{\alpha_{1},\alpha_{2},h}$ is
the constant that appears in the corresponding estimates in Proposition
\ref{lem8}. For $\left\Vert [u_{0},v_{0}]\right\Vert _{\mathcal{E}_{0}}%
\leq\varepsilon$ and $\left\Vert [\tilde{u}_{0},\tilde{v}_{0}]\right\Vert
_{\mathcal{E}_{0}}\leq\varepsilon,$ Theorem \ref{GWP} (i) gives that
$\left\Vert [u,v]\right\Vert _{\mathcal{L}_{\alpha_{1},\alpha_{2}}}%
\leq2\varepsilon$ and$\,\left\Vert [\tilde{u},\tilde{v}]\right\Vert
_{\mathcal{L}_{\alpha_{1},\alpha_{2}}}\leq2\varepsilon,$. Moreover, it follows
from the proof of item (ii) of the same theorem that (see (\ref{aux777}))
\begin{align}
\sup_{|t|<1}|t|^{\alpha_{1}+h}\left\Vert [u(t),v(t)]\right\Vert _{X_{(b+1,d)}%
}+\sup_{|t|>1}|t|^{\alpha_{2}+h}\left\Vert [u(t),v(t)]\right\Vert
_{X_{(b+1,d)}} &  \leq\frac{1}{1-R}\Gamma_{1,h},\label{aux-bound-1}\\
\sup_{|t|<1}|t|^{\alpha_{1}+h}\left\Vert [\tilde{u}(t),\tilde{v}%
(t)]\right\Vert _{X_{(b+1,d)}}+\sup_{|t|>1}|t|^{\alpha_{2}+h}\left\Vert
[\tilde{u}(t),\tilde{v}(t)]\right\Vert _{X_{(b+1,d)}} &  \leq\frac{1}%
{1-R}\Gamma_{1,h}.\label{aux-bound-2}%
\end{align}
For $t\geq2$, with the above estimates in hand, the second term in the R.H.S.
of (\ref{ine-13}) can be handled along the following lines
\begin{align}
&  t^{\alpha_{2}+h}\int_{0}^{t}\left\Vert G(t-s)[0,f(u(s))-f(\tilde
{u}(s)]ds\right\Vert _{X_{(b+1,d)}}ds\nonumber\\
&  \leq t^{\alpha_{2}+h}\left(  C\int_{0}^{1}\phi_{b+1}(t-s)s^{-\alpha_{1}%
b-h}s^{\alpha_{1}+h}\left\Vert [u(s)-\tilde{u}(s),v(s)-\tilde{v}%
(s)]\right\Vert _{X_{(b+1,d)}}ds\right.  \nonumber\\
\text{ \ } &  \left.  \text{ \ \ \ \ \ \ \ \ \ \ }+C\int_{1}^{t}\phi
_{b+1}(t-s)s^{-\alpha_{2}b-h}s^{\alpha_{2}+h}\left\Vert [u(s)-\tilde
{u}(s),v(s)-\tilde{v}(s)]\right\Vert _{X_{(b+1,d)}}ds\right)  \nonumber\\
&  \text{ \ \ \ \ \ \ \ \ \ \ \ \ }\times\left(  \left\Vert \lbrack
u,v]\right\Vert _{\mathcal{L}_{\alpha_{1},\alpha_{2}}}^{b-1}+\left\Vert
[\tilde{u},\tilde{v}]\right\Vert _{\mathcal{L}_{\alpha_{1},\alpha_{2}}}%
^{b-1}\right)  \\
&  \leq C2^{b+1}\varepsilon^{b-1}\frac{\Gamma_{1,h}}{1-R}\left(  t^{\alpha
_{2}+h}\int_{0}^{1}(t-s)^{-\frac{3}{2}}s^{-\alpha_{1}b-h}ds+t^{\alpha_{2}%
+h}\int_{1}^{t/2}(t-s)^{-\frac{3}{2}}s^{-\alpha_{2}b-h}ds\right.  \nonumber\\
\hspace{1cm} &  \left.  \text{
\ \ \ \ \ \ \ \ \ \ \ \ \ \ \ \ \ \ \ \ \ \ \ \ \ \ }+t^{\alpha_{2}+h}%
\int_{t/2}^{t-1}(t-s)^{-\frac{3}{2}}s^{-\alpha_{2}b-h}ds+t^{\alpha_{2}+h}%
\int_{t-1}^{t}(t-s)^{-\beta}s^{-\alpha_{2}b-h}ds\right)  ,\label{aux-asymp-11}%
\end{align}
where we have split the integral $\int_{1}^{t}...ds$ in three parts and used
\eqref{aux-bound-1} and \eqref{aux-bound-2}. Next it follows that
\begin{align}
&  \text{R.H.S. of (\ref{aux-asymp-11})}\nonumber\\
&  \leq C2^{b+1}\varepsilon^{b-1}\frac{\Gamma_{1,h}}{1-R}\left(  t^{\alpha
_{2}+h}(t-1)^{-\frac{3}{2}}\int_{0}^{1}s^{-\alpha_{1}b-h}ds+t^{\alpha_{2}%
+h}(t/2)^{-\frac{3}{2}}\int_{1}^{t/2}s^{-\alpha_{2}b-h}ds\right.  \nonumber\\
&  \left.  +t^{\alpha_{2}+h}(t/2)^{-\alpha_{2}b-h}\int_{t/2}^{t-1}%
(t-s)^{-\frac{3}{2}}ds+t^{\alpha_{2}+h}(t-1)^{-\alpha_{2}b-h}\int_{t-1}%
^{t}(t-s)^{-\beta}ds\right)  \nonumber\\
&  \leq C2^{b+1}\varepsilon^{b-1}\frac{\Gamma_{1,h}}{1-R}\left(  t^{\alpha
_{2}+h-\frac{3}{2}}+t^{-\alpha_{2}(b-1)-\frac{1}{2}}+t^{-\alpha_{2}%
(b-1)}\right)  .\label{aux-asymp-12}%
\end{align}

Taking $\limsup\limits_{t\rightarrow\infty}$ in (\ref{ine-13}) and considering
estimates (\ref{aux-asymp-11})-(\ref{aux-asymp-12}) and condition
(\ref{condition}) lead us to
\begin{align*}
\limsup_{t\rightarrow\infty}t^{\alpha_{2}+h}\left\Vert [u(t)-\tilde
{u}(t),v(t)-\tilde{v}(t)]\right\Vert _{X_{(b+1,d)}} &  \leq\lim_{t\rightarrow
\infty}t^{\alpha_{2}+h}\left\Vert G(t)[u_{0}-\tilde{u}_{0},v_{0}-\tilde{v}%
_{0}]\right\Vert _{X_{(b+1,d)}}\\
&  +C2^{b+1}\varepsilon^{b-1}\frac{\Gamma_{1,h}}{1-R}\lim_{t\rightarrow\infty
}(t^{\alpha_{2}+h-\frac{3}{2}}+t^{-\alpha_{2}(b-1)-\frac{1}{2}}+t^{-\alpha
_{2}(b-1)})\\
&  =0,
\end{align*}
for $0\leq\alpha_{2}<\frac{3}{2}-h$, which implies (\ref{Stability}).

In the sequel, we address the reciprocal statement. Assuming (\ref{Stability})
and using the same estimates  (\ref{aux-asymp-11})-(\ref{aux-asymp-12}) for
\[
t^{\alpha_{2}+h}\int_{0}^{t}\left\Vert G(t-s)[0,f(u(s))-f(\tilde
{u}(s))]ds\right\Vert _{X_{(b+1,d)}}ds,
\]
it follows that%
\begin{align*}
&  \limsup_{t\rightarrow+\infty}t^{\alpha_{2}+h}\left\Vert G(t)[u_{0}%
-\tilde{u}_{0},v_{0}-\tilde{v}_{0}]\right\Vert _{X_{(b+1,d)}}\\
&  \leq\limsup_{t\rightarrow+\infty}t^{\alpha_{2}+h}\left\Vert [u(t)-\tilde
{u}(t),v(t)-\tilde{v}(t)]\right\Vert _{X_{(b+1,d)}}\\
&  +\limsup_{t\rightarrow+\infty}t^{\alpha_{2}+h}\int_{0}^{t}\left\Vert
G(t-s)[0,f(u(s))-f(\tilde{u}(s))]\right\Vert _{X_{(b+1,d)}}ds\\
&  =0+C2^{b+1}\varepsilon^{b-1}\frac{\Gamma_{1,h}}{1-R}\lim_{t\rightarrow
+\infty}(t^{\alpha_{2}+h-\frac{3}{2}}+t^{-\alpha_{2}(b-1)-\frac{1}{2}%
}+t^{-\alpha_{2}(b-1)})=0,
\end{align*}
which concludes the proof.\fin

\begin{remark}
\label{rem} Theorem \ref{Stable} allows us to improve the scattering decay in
Theorem \ref{scat}. In fact, for $[u^{\pm},v^{\pm}]$ as in Theorem \ref{scat},
we can show that
\begin{equation}
\left\Vert \lbrack u(t)-u^{\pm}(t),v(t)-v^{\pm}(t)]\right\Vert _{X_{(b+1,d)}%
}=o(|t|^{-b(\alpha_{2}+h)}),\text{ as }t\rightarrow\pm\infty,\label{improve2}%
\end{equation}
provided that
\begin{equation}
\lim_{t\rightarrow\pm\infty}|t|^{\alpha_{2}+h}\left\Vert G(t)[u_{0}%
,v_{0}]\right\Vert _{X_{(b+1,d)}}=0.\label{improve1}%
\end{equation}
In other words, we have a gain by being able to replace $O$ with $o$ in
(\ref{scat10})-(\ref{scat102}). For that matter, using (\ref{improve1}) in
Theorem \ref{Stable} with $[\tilde{u}_{0},\tilde{v}_{0}]=0$ and $[\tilde
{u},\tilde{v}]=0,$ we obtain that
\begin{equation}
\lim_{t\rightarrow\pm\infty}|t|^{\alpha_{2}+h}\left\Vert
[u(t),v(t)]\right\Vert _{X_{(b+1,d)}}=0.\label{improve3}%
\end{equation}
Then, for $t>k\geq1,$ we can estimate
\begin{align}
&  \left\Vert [u(t)-u^{+}(t),v(t)-v^{+}(t)]\right\Vert _{X_{(b+1,d)}%
}=\left\Vert \int_{t}^{\infty}G(t-s)[0,f(u(s))]ds\right\Vert _{X_{(b+1,d)}%
}\nonumber\\
&  \leq C\int_{t}^{\infty}\phi_{b+1}(t-s)\left\Vert [u(s),v(s)]\right\Vert
_{X_{(b+1,d)}}^{b}ds\nonumber\\
&  \leq C\left(  \int_{t}^{t+1}(s-t)^{-\beta}s^{-b(\alpha_{2}+h)}ds+\int
_{t+1}^{\infty}(s-t)^{-\frac{3}{2}}s^{-b(\alpha_{2}+h)}ds\right)  \nonumber\\
&  \times\left(  \sup_{s>1}s^{\alpha_{2}+h}\left\Vert [u(s),v(s)]\right\Vert
_{X_{(b+1,\infty)}}\right)  ^{b-1}\sup_{s>k}s^{\alpha_{2}+h}\left\Vert
[u(s),v(s)]\right\Vert _{X_{(b+1,d)}}\nonumber\\
&  \leq C\frac{1}{(1-R)^{b-1}}\Gamma_{1,h}^{b-1}t^{-b(\alpha_{2}+h)}\sup
_{t>k}t^{\alpha_{2}+h}\left\Vert [u(t),v(t)]\right\Vert _{X_{(b+1,d)}.%
}\label{aux-scat-10}%
\end{align}
Now, multiplying both sides of (\ref{aux-scat-10}) by $t^{b(\alpha_{2}+h)}$
and afterwards taking the supremum over $t>k,$ we arrive at%
\[
\sup_{t>k}t^{b(\alpha_{2}+h)}\left\Vert [u(t)-u^{+}(t),v(t)-v^{+}%
(t)]\right\Vert _{X_{(b+1,d)}}\leq C\frac{1}{(1-R)^{b-1}}\Gamma_{1,h}%
^{b-1}\sup_{t>k}t^{\alpha_{2}+h}\left\Vert [u(t),v(t)]\right\Vert
_{X_{(b+1,d)}},
\]
which, making $k\rightarrow\infty$, leads us to
\begin{align*}
0\leq\limsup_{t\rightarrow\infty}t^{b(\alpha_{2}+h)}\left\Vert [u(t)-u^{+}%
(t),v(t)-v^{+}(t)]\right\Vert _{X_{(b+1,d)}} &  \leq C\frac{1}{(1-R)^{b-1}%
}\Gamma_{1,h}^{b-1}\limsup_{t\rightarrow\infty}t^{\alpha_{2}+h}\left\Vert
[u(t),v(t)]\right\Vert _{X_{(b+1,d)}}\\
&  =0,
\end{align*}
because the limit (\ref{improve3}), and we are done.
\end{remark}

\section*{Appendix}

In this part we provide the proof of inequality \eqref{Osci2} in detail. We
first recall this inequality: Let $\widetilde{m}:\mathbb{R}\rightarrow
\mathbb{C}$ be a smooth function supported in $[-2,\,-1/2]\cup\lbrack1/2,\,2]$
and assume that the phase $-t\psi(\lambda)\pm r\lambda$ has no critical points
in the support of $\widetilde{m}$. Then, we have that
\[
\left\vert \int_{\mathbb{R}}e^{i\left(  -t\psi(\lambda)\pm r\lambda\right)
}\widetilde{m}(\lambda)d\lambda\right\vert \leq\frac{c}{(1+D)^{k}}\sum
_{j=0}^{k}\int_{\mathbb{R}}|\partial_{\lambda}^{j}\widetilde{m}(\lambda
)|d\lambda,
\]
where $\psi(\lambda)=\sqrt{(\lambda^{2}+\rho^{2})(\lambda^{2}+\rho^{1}+1)}$
for $\rho=\frac{n-1}{2}$ and
\[
D=\min\limits_{|\lambda|\in\lbrack1/2,\,2]}\left\vert -t\partial_{\lambda}%
\psi\right\vert +r.
\]

Since $-t\psi(\lambda)\pm r\lambda$ has no critical points in the support of
$\widetilde{m}$, we have that $-t\partial_{\lambda}\psi\pm r\neq0$ for
$\lambda\in K=$ supp $(\widetilde{m}(\lambda))=[-2,-\frac{1}{2}]\cup
\lbrack\frac{1}{2},2]$. Combining with the fact that $\psi(\lambda
)\simeq\lambda^{2}$ and $\partial_{\lambda}\psi\simeq2\lambda$, we arrive at
\[
C_{1}(\left\vert t\partial_{\lambda}\psi\right\vert +r)\leq\left\vert
-t\partial_{\lambda}\psi\pm r\right\vert \leq4(\left\vert t\partial_{\lambda
}\psi\right\vert +r),\text{ for all }\lambda\in K,
\]
where $C_{1}$ is independent of $t,r,\lambda.$

Using the fact that
\[
\frac{1}{[i(-t\partial_{\lambda}\psi \pm r)]}\partial_{\lambda}e^{i(-t\partial
_{\lambda}\psi\pm r\lambda)}=e^{i(-t\psi(\lambda)\pm r\lambda)},
\]
it follows that $(k=1)$%
\begin{align*}
\left\vert \int_{\mathbb{R}}e^{i(-t\psi(\lambda)\pm r\lambda)}\widetilde
{m}(\lambda)d\lambda\right\vert  &  =\left\vert \int_{\mathbb{R}}%
\partial_{\lambda}e^{i(-t\psi(\lambda)\pm r\lambda)}\frac{1}{[i(-t\partial_{\lambda}\psi \pm r)]}\widetilde{m}(\lambda)d\lambda\right\vert \\
&  =\left\vert \int_{\mathbb{R}}e^{i(-t\psi(\lambda)\pm r\lambda)}%
\partial_{\lambda}\left(  \frac{1}{[i(-t\partial_{\lambda}\psi \pm r)]}%
\widetilde{m}(\lambda)\right)  d\lambda\right\vert \\
&  =\left\vert \int_{K}e^{i(-t\psi(\lambda)\pm r\lambda)}\partial_{\lambda
}\left(  \frac{1}{[i(-t\partial_{\lambda}\psi \pm r)]}\widetilde{m}%
(\lambda)\right)  d\lambda\right\vert \\
&  \leq\int_{K}\left\vert \partial_{\lambda}\left(  \frac{1}{[i(-t\partial_{\lambda}\psi \pm r)]}\widetilde{m}(\lambda)\right)  \right\vert d\lambda\\
&  =\int_{K}\left\vert \partial_{\lambda}\left(  \frac{1}{[i(-t\partial_{\lambda}\psi \pm r)]}\right)  \widetilde{m}(\lambda)+\frac{1}%
{[i(-t\partial_{\lambda}\psi \pm r)]}\partial_{\lambda}\widetilde{m}%
(\lambda)\right\vert d\lambda\\
&  \leq C\frac{1}{\min\limits_{\lambda\in K}\left\vert -t\partial_{\lambda
}\psi \pm r\right\vert }\left(  \int_{K}|\widetilde{m}(\lambda)|d\lambda+\int
_{K}|\partial_{\lambda}\widetilde{m}(\lambda)|d\lambda\right)  \\
&  \lesssim C\frac{1}{1+\min\limits_{\lambda\in K}\left\vert -t\partial
_{\lambda}\psi \pm r\right\vert }\left(  \int_{K}|\widetilde{m}(\lambda
)|d\lambda+\int_{K}|\partial_{\lambda}\widetilde{m}(\lambda)|d\lambda\right)
\\
&  \hbox{   (because}\min\limits_{\lambda\in K}\left\vert -t\partial_{\lambda
}\psi \pm r\right\vert \neq0\hbox{   and bounded)}\\
&  \lesssim C\frac{1}{1+\min\limits_{\lambda\in K}\left\vert -t\partial
_{\lambda}\psi\right\vert +r}\left(  \int_{K}|\widetilde{m}(\lambda
)|d\lambda+\int_{K}|\partial_{\lambda}\widetilde{m}(\lambda)|d\lambda\right)
.
\end{align*}
Repeating the same process $k$ times, we obtain
\begin{align*}
\left\vert \int_{\mathbb{R}}e^{i(-t\psi(\lambda)\pm r\lambda)}\widetilde
{m}(\lambda)d\lambda\right\vert  &  =\left\vert \int_{\mathbb{R}}%
e^{i(-t\psi(\lambda)\pm r\lambda)}\underset{d-times}{\underbrace
{\partial_{\lambda}\frac{1}{[i(-t\partial_{\lambda}\psi \pm r)]}...\partial
_{\lambda}\frac{1}{[i(-t\partial_{\lambda}\psi \pm r)]}}}\widetilde{m}%
(\lambda)d\lambda\right\vert \\
&  \leq\left\vert \int_{K}e^{i(-t\psi(\lambda)\pm r\lambda)}\underset
{k-times}{\underbrace{\partial_{\lambda}\frac{1}{[i(-t\partial_{\lambda}\psi \pm r)]}...\partial_{\lambda}\frac{1}{[i(-t\partial_{\lambda}\psi \pm r)]}}%
}\widetilde{m}(\lambda)d\lambda\right\vert \\
&  \leq\int_{K}\left\vert \underset{k-times}{\underbrace{\partial_{\lambda
}\frac{1}{[i(-t\partial_{\lambda}\psi \pm r)]}...\partial_{\lambda}\frac
{1}{[i(-t\partial_{\lambda}\psi \pm r)]}}}\widetilde{m}(\lambda)\right\vert
d\lambda\\
&  \leq C\frac{1}{(\min\limits_{\lambda\in K}\left\vert -t\partial_{\lambda}\psi \pm r \right\vert )^{k}} \int_{K}\sum_{j=0}^{k}|\partial_{\lambda}%
^{j}\widetilde{m}(\lambda)|d\lambda\\
&  \lesssim C\frac{1}{(1+\min\limits_{\lambda\in K}|-t\partial_{\lambda}%
\psi|+r)^{k}}\int_{K}\sum_{j=0}^{k}|\partial_{\lambda}^{j}%
\widetilde{m}(\lambda)|d\lambda,
\end{align*}
and we are done.

\noindent\\
\textbf{Conflict of Interest Statement:} The authors declare that there are no conflicts of interest to disclose.

\end{document}